\newtheorem{theorem}{\bf Theorem}[section]
\newtheorem{remark}[theorem]{\bf{Remark}}
\DeclareMathOperator{\diver}{div}
\numberwithin{equation}{section}
\title[Non-uniqueness for advection-diffusion equation]{Non-uniqueness of parabolic solutions for advection-diffusion equation}
\author{Th\'er\`ese Moerschell}
\address{Institute of Mathematics, ETH, Z\"urich, Switzerland}
\email{therese.moerschell@math.ethz.ch}
\author{Massimo Sorella}
\address{Department of Mathematics, Imperial College London, London, SW7 2AZ, UK}
\email{m.sorella@imperial.ac.uk}
\newtheorem{thm}{Theorem}[section]
\newtheorem{prop}[thm]{Proposition}
\newtheorem{lem}[thm]{Lemma}
\theoremstyle{definition}
\newtheorem{df}{Definition}
\theoremstyle{remark}
\newcommand\numberthis{\addtocounter{equation}{1}\tag{\theequation}}
\def\R{\mathbb{R}}
\def\N{\mathbb{N}}
\def\Z{\mathbb{Z}}
\def\T{\mathbb{T}}
\def\E{\mathbb{E}}
\def\P{\mathbb{P}}
\def\dP{\mathbb{P}}
\def\dW{\mathbb{W}}
\def\F{\mathcal{F}}
\def\I{\mathcal{I}}
\def\J{\mathcal{J}}
\def\L{\mathcal{L}}
\def\U{\mathcal{U}}
\def\one{\mathds{1}}
\def\bb{\textit{b}}
\def\bW{\textbf{W}}
\def\bX{\textbf{X}}
\def\bY{\textbf{Y}}
\newcommand\linf[2]{\lVert#1\rVert_{L^\infty(#2)}}
\newcommand{\intTd}[1][d]{\int_{\T^{#1}}}
\def\half{\frac{1}{2}}
\DeclareMathOperator{\dv}{div}
\DeclareMathOperator{\dist}{dist}
\DeclareMathOperator{\supp}{supp}
\def\solTE{\rho}
\def\solAD{\theta}
\newcommand{\solbAD}[1][]{\bar{\theta}_{#1}}
\def\solADe{\theta_{even}}
\def\solADo{\theta_{odd}}
\def\TEin{\rho_{\text{in}}}
\def\ADin{\theta_{\text{in}}}
\def\SS{S}
\begin{document}

\definecolor{vert}{rgb}{0.03,0.4,0.03}
\newcommand{\com}[1]{\textcolor{blue}{#1}}
\newcommand{\blue}[1]{\textcolor{blue}{#1}}
\newcommand{\red}[1]{\textcolor{red}{#1}}

\begin{abstract}
We present a novel example of a divergence--free velocity field $b \in L^\infty  ((0,1);  L^p (\T^2))$ for $p<2$ arbitrary but fixed which leads to non-unique solutions of advection--diffusion in the class $L^\infty_{t,x} \cap L^2_t H^1_x$ while satisfying the local energy inequality. This result complements the known uniqueness result of bounded solutions for divergence-free and $L^2_{t,x}$ integrable velocity fields. 
Additionally, we also prove the necessity of time integrability of the velocity field for the uniqueness result. More precisely, we  construct another divergence--free velocity field $b \in L^p  ((0,1);  L^\infty (\T^2))$, for $p< 2$ fixed, but arbitrary, with non--unique aforementioned solutions.  
Our contribution closes the gap between the regime of uniqueness and non-uniqueness in this context. Previously, it was shown with the convex integration technique that for $d\geq 3$ divergence--free velocity fields
 $ b \in L^\infty ((0,1) ;L^p (\T^d))$ with $p < \frac{2d}{d+2}$ could lead to non--unique solutions in 
the space $L^\infty_t L^{\sfrac{2d}{d-2}}_x \cap L^2_t H^1_x$.
 Our proof is based on a  stochastic Lagrangian approach and {does} not rely on convex integration.
\end{abstract}

\maketitle
 
\section{Introduction}

In this paper we study the Cauchy problem of the advection{--}diffusion equation with divergence free velocity fields $b : [0,T] \times \T^d \to \R^d$, where $\T^d \cong \R^d / \Z^d$ is the $d$ dimensional torus {with $d \ge 2$}, more precisely 
\begin{equation}\label{eq:ADE}\tag{ADV-DIFF}
    \begin{cases}
    \partial_t \solAD  + b \cdot \nabla \solAD  = \Delta \solAD & \text{ on } [0,T] \times \T^d,\\
    \solAD(0, \cdot ) = \ADin (\cdot ) \in L^\infty & \text{ on }\T^d\,,
    \end{cases}
\end{equation}
where $\ADin : \T^d \to \R$ is a given bounded initial datum and $\solAD : [0,T] \times \T^d \to \R$ is the unknown. If $b \in C^\infty$ then it is classical to prove existence and uniqueness of solutions  satisfying the energy balance, see e.g. the standard PDEs textbook \cite{E98}
$$ \frac{1}{2} \int_{\T^d } | \solAD (t,x)|^2 dx + \int_0^t \int_{\T^d} | \nabla \solAD (s,x)|^2 dx ds =  \frac{1}{2} \int_{\T^d } | \ADin (x)|^2 dx \,.  $$

It is known that for divergence free velocity fields $b \in L^2 ({[0,T]} \times \T^d)$ in any $d \geq 2${,} bounded distributional solutions to the advection{--}diffusion equation are unique and parabolic, see  
\cite{BCC24} and previous results related to this equation with rough velocity fields in \cites{F08,LL19,LSU68}.
 We give here the definition of parabolic solutions. 
\begin{df}[Parabolic solutions] \label{d:parabolic}
Let $b \in L^1_{t,x}$ be a divergence-free vector field. We say that $\solAD$ {$ \in L^\infty_{t,x}$} is a parabolic solution to the \eqref{eq:ADE} if $\solAD$ is a distributional solution, i.e. it holds
\begin{equation*} 
\int_0^T \int_{\T^{{d}} } \solAD (t,x) ( \partial_t \varphi(t,x) + b\cdot \nabla \varphi (t,x) + \Delta \varphi (t,x) )  dx dt =  {-}\int_{\T^d }  \ADin (x) \varphi(0, x) dx \,,
\end{equation*}  
for any $\varphi \in C^\infty_c ([0,T) \times \T^d)$ and $\solAD \in L^\infty_{t,x} \cap L^2_t H^1_x$   satisfies the local energy inequality, i.e. it holds
\begin{equation} \label{local:energy}
\partial_t \frac{|\theta|^2}{2} + \diver \left (b \frac{|\theta|^2}{2} \right ) \leq \Delta \frac{|\theta|^2}{2} - |\nabla \theta|^2 \,,
\end{equation}
for any non-negative $\varphi \in C^\infty_c ([0,T)  \times \T^d)$.
\end{df} 
Integrating {\eqref{local:energy}} in space--time{,} it is possible to show  the global energy inequality
$$ \frac{1}{2} \int_{\T^d } | \solAD (t,x)|^2 dx + \int_0^t \int_{\T^d} | \nabla \solAD (s,x)|^2 dx ds \leq  \frac{1}{2} \int_{\T^d } | \ADin (x)|^2 dx \,.  $$
\begin{remark}
{In \cite{BCC24}, given $d \geq 2$ and $b \in L^2 ({[0,T]} \times \T^d)$ divergence free, Bonicatto, Ciampa and Crippa  use a commutator estimate to prove the existence of a unique bounded solution satisfying the global energy equality. The same argument can be used to show the existence of a unique parabolic solution (according to Definition \ref{d:parabolic}) which moreover satisfies the local energy equality 
\[\partial_t \frac{|\theta|^2}{2} + \diver \left (b \frac{|\theta|^2}{2} \right ) = \Delta \frac{|\theta|^2}{2} - |\nabla \theta|^2 \,.\]
The non-unique solutions in Theorem \ref{thm:NU_ADE_loops} and Theorem \ref{thm:NU_ADE_chess} are obtained as the limit of a sequence of solutions to \eqref{eq:ADE} with regularized  velocity fields. The strong convergence of this sequence in the $L^2_t H^1_x$ topology is not guaranteed. Although the  solutions with the regularized velocity fields satisfy the local energy equality, we can only guarantee an inequality for the limit points.  This loss of strong convergence in the $L^2_t H^1_x$ topology is analogous to the phenomenon encountered in the proof of existence of suitable Leray solutions for the 3D Navier–Stokes equations.
It remains an open problem to prove non-uniqueness of parabolic solutions to \eqref{eq:ADE} with a integrable divergence-free velocity field satisfying the local energy equality. }
\end{remark}

With the convex integration technique, introduced by De Lellis and  Sz\'ekelyhidi  in \cite{DLL09,DLL13}, it is possible to show the existence of a divergence free velocity field $b \in L^\infty ((0,1);  L^p (\T^d)) $, with $p < \frac{2d}{d+2}$ and $d \geq 3$ arbitrary but fixed, for which solutions  for the advection{--}diffusion equation in  $L^\infty_t L^{\sfrac{2d}{d-2}}_x \cap L^2_t H^1_x$ are non--unique, see this result by Modena and Sattig in \cite{MSa20}. {Their result covers also Sobolev vector fields and other regularity ranges when one does not require a solution in $L_t^2H_x^1$, and they achieve a similar result for the advection equation.} 
 The convex integration technique for the advection{--}diffusion equation has been introduced  in \cite{MS18}  by Modena and  Sz\'ekelyhidi. We also refer to \cites{BDLC20,PS23,SG21,CL22}  for convex integration results for the advection equation. 
 
  However, there is a significant gap between uniqueness and non-uniqueness of parabolic solutions in terms of the regularity of the velocity field, as we summarize here:
\begin{itemize}
\item Uniqueness for divergence free $b \in L^2_{t,x} (\T^d)$ and $d\geq 2$: it is known that parabolic solutions, as in Definition \ref{d:parabolic}, are unique via commutator estimates or maximal parabolic regularity methods, see for instance \cite{BCC24}. 
\item Non-uniqueness for divergence free $b \in L^\infty ((0,1); L^p (\T^d))$ for $p < \frac{2d}{d+2}$ and $d \geq 3$: it is known that solutions in $L^\infty_t L^{\sfrac{2d}{d-2}}_x \cap L^2_t H^1_x$ are non-unique thanks to the convex integration method, see  \cite[Theorem 1.4]{MSa20} applied with parameters $m=1, \tilde{m} =0, p=2, k=2$. We highlight that in this paper it is not proved any local energy inequality \eqref{local:energy}, so the solutions are not even parabolic.
\end{itemize}


In this context we prove the following results, proving that the $L^2_{t,x}$ integrability condition on the divergence--free velocity field is necessary to have uniqueness of parabolic solutions.

\begin{thm}\label{thm:NU_ADE_loops}
    For every $p < 2$ there exists a divergence free velocity field $\bb \in L^\infty({[0,T]}; L^p (\T^2))$ and an initial data $\solAD_{in} \in L^\infty(\T^2)$ such that the advection--diffusion equation \eqref{eq:ADE} admits at least two distinct parabolic solutions as in Definition \ref{d:parabolic}. 
\end{thm}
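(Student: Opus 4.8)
The plan is to construct the velocity field and the two solutions simultaneously via a self-similar, infinitely iterated geometric mechanism, following a stochastic Lagrangian philosophy rather than convex integration. First I would fix a building block: a smooth, compactly-supported-in-time, divergence-free vector field on $\T^2$ that, over a short time interval, transports a portion of the torus along a family of closed loops (hence the notation $\loopin, \loopoutUR, \loopoutLL$ in the preamble). The key feature of the block must be that the associated stochastic flow $dX_t = b(t,X_t)\,dt + \sqrt 2\,dW_t$ splits mass: particles starting in the ``input'' square $\loopin$ are sorted, by the combination of drift along the loops and Brownian diffusion, into two disjoint ``output'' squares $\loopoutUR$ and $\loopoutLL$, with the splitting ratio controllable (e.g.\ close to $1/2$ each) and quantitatively stable under the diffusion. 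I would then rescale this block in space by a factor $\lambda_n^{-1}$ and in time by a matching factor, and place $\sim\lambda_n^2$ translated copies on a grid at the $n$-th generation, activating generation $n$ on a time subinterval $I_n$ of $[0,T]$ whose lengths sum to $T$.

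The second step is the bookkeeping that makes this produce a velocity field in $L^\infty_t L^p_x$ with $p<2$ but not in $L^2_{t,x}$. On generation $n$, to move mass a distance $\sim\lambda_n^{-1}$ against diffusion of size $\sqrt{|I_n|}$ in the rescaled picture, one needs the drift speed to be $\sim \lambda_n^{-1}/|I_n|$; since $b$ has spatial support of measure $\sim 1$ (union of all the small loop-supports on the grid) at each time, $\|b(t)\|_{L^p}^p \sim (\lambda_n^{-1}/|I_n|)^p$, and choosing $|I_n| = 2^{-n}$ and $\lambda_n$ growing fast enough (e.g.\ $\lambda_n = 2^{\alpha n}$ with $\alpha$ large) makes $\|b(t)\|_{L^p}$ uniformly bounded for the chosen $p<2$ while $\|b(t)\|_{L^2}$ blows up as $n\to\infty$, consistent with the nonuniqueness being possible only outside $L^2_{t,x}$ by \cite{BCC24}. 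I would take $\ADin$ to be, say, $\pm 1$ on a chessboard-type pattern at the coarsest scale, or simply a single nonconstant bounded datum adapted to the first-generation input cells.

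The third and central step is to exhibit two distinct parabolic solutions. One solution, $\solADe$, is the ``symmetric'' one obtained as the limit of the solutions $\solAD_n$ with the velocity fields truncated at generation $n$ (equivalently, regularized); by the local energy equality for each $\solAD_n$, by the uniform $L^\infty$ bound from the maximum principle, and by the uniform $L^2_tH^1_x$ bound from the energy inequality, a subsequence converges weakly-$*$ in $L^\infty_{t,x}$ and weakly in $L^2_tH^1_x$ to a parabolic solution satisfying the local energy \emph{inequality} \eqref{local:energy} — this is exactly the compactness-with-loss-of-strong-convergence point flagged in the Remark. The second solution $\solADo$ is built by a different labeling of the limit: because the loop mechanism at each generation deterministically routes the ``UR'' descendants and ``LL'' descendants to prescribed cells, one can track a two-valued function that is constant along the iterated partition and whose value is $+1$ on one family of limiting cells and $-1$ on the other; passing to the limit this is a genuinely different bounded function that still solves \eqref{eq:ADE} distributionally (the advection term is handled because $b$ is divergence-free and the solution is, at each generation, constant on the moving cells up to controlled diffusive error) and still satisfies the local energy inequality. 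Concretely I would verify the distributional identity and \eqref{local:energy} for $\solADo$ by the same regularization, using that the stochastic representation formula $\solAD_n(t,x) = \E[\ADin(X^{s,x}_t)\mid\text{appropriate backward flow}]$ separates into the two routing channels, and the two solutions differ because the splitting assigns mass to different limiting cells.

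The main obstacle I expect is the quantitative control of the diffusion against the shrinking geometry across infinitely many generations: I must ensure that at generation $n$ the Brownian motion, on the time interval $I_n$, (i) does not prematurely mix mass between a cell and its neighbors at scale $\lambda_n^{-1}$ — so $|I_n| \ll \lambda_n^{-2}$, which is compatible with $|I_n|=2^{-n}$, $\lambda_n=2^{\alpha n}$ for $\alpha>1/2$ — and (ii) is nonetheless strong enough to perform the actual \emph{splitting} of mass inside a loop block into the two outputs with ratios bounded away from $0$ and $1$ uniformly in $n$, which is the delicate design of the building block and requires the drift-to-diffusion balance $\lambda_n^{-1}/|I_n|$ computed above; getting both simultaneously, together with the summable loss of energy $\sum_n(\text{energy dissipated at generation }n)<\infty$ so that the limit is a bona fide $L^2_tH^1_x$ function, is the technical heart of the argument. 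The remaining verifications — divergence-free and the $L^\infty_tL^p_x$ bound of $b$, the weak compactness, and the distributional/local-energy identities for both limits — are then routine given the construction.
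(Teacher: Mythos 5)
Your proposal has the right philosophy (stochastic Lagrangian, no convex integration, loop-type blocks at super-separated scales), but it is missing the actual mechanism that produces \emph{two} solutions, and this is not a detail. For each regularized/truncated field the bounded solution of \eqref{eq:ADE} is unique, so non-uniqueness for the limit field must come either from two genuinely different approximating sequences whose (unique) solutions stay a fixed $L^1$-distance apart, or from a direct verification of the distributional identity and of \eqref{local:energy} for a second explicit candidate. Your second solution $\solADo$ is introduced as a ``different labeling'' of the limiting partition, but you never verify that this relabelled function solves the equation, and if your building block really lets the diffusion split the mass of an input cell into the two output cells with ratio close to $1/2$, then Feynman--Kac forces the symmetric profile at every generation and any asymmetric relabelling is \emph{not} a solution --- the Laplacian is precisely what excludes it. The paper resolves this by making the approximations themselves disagree: a small ``gluing'' pipe is added so that $b_{2n}$ routes all the relevant mass to the first quadrant while $b_{2n+1}$ routes it to the third, the bifurcation happening only at the single singular point reached in the limit; quantitative backward and forward stability lemmas (Gr\"onwall along the trajectory plus Doob, under the high P\'eclet condition $\sqrt{\Delta t_q}\ll a_q$, with a time-intermittent switching of the pipes to handle the intersections) show the diffusion cannot destroy this routing, and the two parabolic solutions are weak* limits of the even and odd subsequences (parabolicity via an Aubin--Lions argument). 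Without something playing the role of the even/odd asymmetry your scheme produces only one solution.

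There are also two quantitative inconsistencies that would make the construction fail as stated. First, asking the diffusion to perform the splitting at scale $\lambda_n^{-1}$ during the active interval $I_n$ forces $\sqrt{|I_n|}\gtrsim\lambda_n^{-1}$, which contradicts your no-premature-mixing requirement $|I_n|\ll\lambda_n^{-2}$; in the paper the splitting is purely deterministic and the diffusion must be negligible at \emph{every} scale, which is exactly what makes the two constraints compatible. Second, the $L^p$ bookkeeping is wrong: if at each time the active support has measure of order one and amplitude $v_n$, then $\|b(t)\|_{L^p}\sim v_n$ for every $p$, so you cannot be uniformly bounded in $L^p$ for $p<2$ while blowing up in $L^2$; either $v_n$ stays bounded (and then $b\in L^\infty_t L^2_x$, where uniqueness holds by \cite{BCC24}) or $b$ lies in no $L^\infty_t L^p_x$ at all. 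One needs the spatial support to shrink with the generation, as in the paper where the $q$-th pipe has area $\sim a_{q-1}a_q$ and speed $v_q=a_q^{-\alpha}$ with $\alpha>1$, so that $a_q^{2}v_q^{p}$ is summable precisely for $p<2$ while $v_q\gg a_q^{-1}$ enforces the high P\'eclet condition. As written, your construction either falls into the uniqueness regime or violates the claimed integrability, and the second solution is never actually constructed.
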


\begin{thm}\label{thm:NU_ADE_chess}
    For every $p < 2$ there exists a divergence free velocity field $\bb \in L^p({[0,T]}; L^{\infty} (\T^2))$ and some initial data $\solAD_{in} \in L^\infty(\T^2)$ such that the advection--diffusion equation \eqref{eq:ADE} admits at least two distinct  parabolic solutions as in Definition \ref{d:parabolic}.
\end{thm}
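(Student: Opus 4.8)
The plan is to follow the scheme behind the proof of Theorem~\ref{thm:NU_ADE_loops}, interchanging the roles of the time and space variables in the distribution of the ``velocity budget'' and replacing the loop building block by a \emph{chessboard} one. Fix $p<2$. We build $b$ by superimposing, over a sequence of shrinking time intervals $I_k\subset(0,T]$ with $\bigcup_{k\ge1}I_k=(0,T]$ and $|I_k|\to0$, rescaled copies of a fixed divergence--free cellular flow on $\T^2$: at stage $k$ the torus is partitioned into congruent cells of side $\ell_k$, and inside each cell one places an autonomous divergence--free flow of amplitude $v_k$, supported near the cell boundary, with orientations alternating from cell to cell as the colours of a chessboard; after a smooth gluing across the $I_k$ this gives $b\in C^\infty((0,T]\times\T^2)$, singular only at $t=0$. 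Hence any two parabolic solutions with the same datum coincide on $[t_0,T]$ as soon as they coincide at some $t_0>0$, so the whole mechanism must emanate from the singular initial time and the argument is entirely about the behaviour as $t\to0^+$. The initial datum $\ADin\in L^\infty(\T^2)$ is a coarse chessboard pattern adapted to the mesh $\ell_1$.

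Two requirements fix the parameters $(\ell_k,v_k,|I_k|)$. First, since the cellular flow fills the torus one has $\norm{b(t)}{L^\infty(\T^2)}\sim\norm{b(t)}{L^2(\T^2)}\sim v_k$ on $I_k$, and we impose $\sum_k|I_k|v_k^p<\infty$ while $\sum_k|I_k|v_k^2=\infty$; this places $b$ in $L^p((0,T);L^\infty(\T^2))$ but outside $L^2((0,T)\times\T^2)$, so that the uniqueness result of \cite{BCC24} recalled above does not apply, and an elementary computation shows the two constraints are compatible precisely because $p<2$. Second, at stage $k$ the flow is kept ``switched on'' for a time much smaller than $\ell_k^2$, so that over $I_k$ the heat semigroup is a small perturbation of the identity at the cell scale and the evolution is, to leading order, the pure transport effected by the cellular flow, the residual diffusive error being summable in $k$. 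The building block is designed so that, iterated over the stages and in this idealised diffusion--free limit, it realises a transport which is genuinely non--deterministic, admitting (at least) two distinct limiting configurations; we denote the resulting two solutions by $\solADe$ and $\solADo$, and they differ on a fixed macroscopic region uniformly along the iteration, as in the mechanism behind Theorem~\ref{thm:NU_ADE_loops}.

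It remains to turn each branch into a genuine parabolic solution by approximation. Truncating the iteration at level $N$ gives a smooth divergence--free field $b^{(N)}$; let $\theta^{(N)}$ be the unique smooth solution of \eqref{eq:ADE} with datum $\ADin$, which satisfies the local, hence global, energy equality. The maximum principle yields $\linf{\theta^{(N)}}{(0,T)\times\T^2}\le\linf{\ADin}{\T^2}$, and the global energy inequality yields a uniform bound of $\theta^{(N)}$ in $L^2((0,T);H^1(\T^2))$; writing $\partial_t\theta^{(N)}=\Delta\theta^{(N)}-\diver(b^{(N)}\theta^{(N)})$ and using the boundedness of $b^{(N)}$ in $L^p((0,T);L^\infty(\T^2))$ one gets $\partial_t\theta^{(N)}$ bounded in $L^p((0,T);H^{-1}(\T^2))$, so that, by Aubin--Lions, a subsequence converges strongly in $L^2((0,T)\times\T^2)$, weakly in $L^2((0,T);H^1(\T^2))$ and weakly--$*$ in $L^\infty((0,T)\times\T^2)$ to some $\theta$. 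Since $b^{(N)}\to b$ in $L^p((0,T);L^\infty(\T^2))$, the products $b^{(N)}\theta^{(N)}$ and $b^{(N)}|\theta^{(N)}|^2$ converge in $L^1((0,T)\times\T^2)$, so $\theta$ is a distributional solution of \eqref{eq:ADE} with datum $\ADin$, and it satisfies the local energy inequality \eqref{local:energy} by weak lower semicontinuity of the Dirichlet energy tested against non-negative functions --- exactly as explained in the Remark following Definition~\ref{d:parabolic}. Carrying this out along two regularisation schemes that drive the iteration down the two branches (and, if needed, adding a vanishing perturbation of the building block at each stage) produces two parabolic solutions $\solADe\neq\solADo$.

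The main obstacle is the quantitative stability estimate underlying the second paragraph: one must show that the true diffusive evolution along each branch stays close, on a fixed macroscopic subset of $(0,T)\times\T^2$ and uniformly in $N$, to the idealised diffusion--free picture, so that the distinction between $\solADe$ and $\solADo$ is not washed out by the cumulative effect of diffusion over infinitely many increasingly fine stages. This forces a delicate balance between the three scales $(\ell_k,v_k,|I_k|)$ --- the per-stage diffusive error must be summable while simultaneously $\sum_k|I_k|v_k^p<\infty$ and $\sum_k|I_k|v_k^2=\infty$ --- together with the verification that the solution produced at the end of stage $k$ is still a chessboard--type function on which the next building block acts as intended. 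The remaining points (divergence--freeness at every scale, the smooth matching across consecutive $I_k$, and the passage of the local energy inequality to the limit) are routine and handled as above.
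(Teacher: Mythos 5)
Your sketch departs from the paper at the one point where all the difficulty lies, and the departure creates a genuine gap. In the paper the singular time of $b$ is the \emph{interior} time $T/2$: on $[0,T/2)$ the field mixes a coarse chessboard down to scale $a_q$ on highly concentrated intervals ($t_{q+1}-t_q=3a_q^{\gamma}$ with $\gamma>2(1+\delta)^2$), on $(T/2,T]$ it is the time reflection $b(t,x)=-b(T-t,x)$ augmented by swapping shear flows, and the two parabolic solutions are weak-$*$ limits of the solutions $\solAD_{2n}$, $\solAD_{2n+1}$ associated with the truncations $b_n=\one_{[0,t_n]\cup[T-t_n,T]}b$; the parity of the number of swaps gives $\solTE_{2n}(T,\cdot)-\solTE_{2n+1}(T,\cdot)=\pm2$ on most of the torus (Proposition \ref{prop:chessproperties}), and the Feynman--Kac/Doob stability estimates (Lemmas \ref{lem:chessstabback}--\ref{lem:TE-ADE_close}) transfer this separation to the diffusive solutions. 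You instead put the singularity at $t=0$, prescribe the datum at the singular time, and assert that the building block ``realises a transport which is genuinely non-deterministic'' with two limiting configurations. That assertion is the entire theorem and nothing in your sketch supports it: since your field is smooth for $t>0$, two parabolic solutions coincide once they agree at any $t_0>0$, so they must differ on $(0,\epsilon)$ for every $\epsilon$ while attaining the same datum. But a parabolic solution obeys the global energy inequality, so $\limsup_{t\to0^+}\|\theta(t)\|_{L^2}\le\|\ADin\|_{L^2}$; combined with weak-$*$ attainment of $\ADin$ this forces \emph{strong} $L^2$ attainment at $t=0^+$ for both branches, which kills the Depauw-type mechanism you are implicitly invoking (an anomalous branch given by fine chessboards converging only weakly to the datum, at the price of an energy jump), and you offer no replacement compatible with this constraint. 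The paper's interior-time branching is designed precisely to avoid this obstruction: the datum is attained where $b$ is regular, the solution is carried to fine scales before the singular time, and the two branches are distinguished only afterwards, by the swap structure of the even/odd truncations of the \emph{same} field.

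Two further points. The quantitative core --- that diffusion with $\kappa=1$ does not destroy the pattern over infinitely many stages --- is exactly what you defer as ``the main obstacle''; in the paper it is items (3)--(4) of Proposition \ref{prop:chessproperties} (the velocity is locally constant on balls of radius $a_{q+1}^{1+\delta}$ around good trajectories) together with the backward/forward stochastic stability lemmas, using Doob's inequality and the concentration condition $\gamma>2(1+\delta)^2$, i.e.\ $|I_q|\ll a_{q+1}^{2}$; your scaling heuristics are consistent with \eqref{hyp:gamma}, but without an analogue of these lemmas there is no proof that the two limits stay separated. Finally, your truncation/Aubin--Lions argument for the parabolicity of the limits is sound and is essentially Lemma \ref{lem:parabolic_limit} of the paper, but ``two regularisation schemes that drive the iteration down the two branches'' is not a construction: you need a concrete device, such as the paper's swapping fields plus even/odd truncation, that makes different subsequences of approximations of one fixed $b$ converge to different solutions.
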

\begin{remark}
Our result applies to any dimension $d \geq 2$, lifting the velocity fields and the initial datum  as in the statement to be independent of the last $d-2$ variables. 
\end{remark}

These theorems prove  non-uniqueness of parabolic solutions, in particular these solutions satisfy the local energy inequality. It is clear that distributional solutions are more flexible, therefore it is harder to prove non-uniqueness of parabolic solutions and very few examples of non--uniqueness are known with an additional local energy inequality. The definition of  parabolic solutions reminds the one of {\em suitable} Leray-Hopf for the 3D Navier--Stokes equations \cites{leray,H51,CKN82}.  The local energy inequality has been exploited 
  in \cite{CKN82} to prove that the singular set of  3D Navier--Stokes equations solutions has parabolic Hausdorff dimension  less or equal than one, improving the previous result by Scheffer \cite{S80}. There is an extensive literature on local and global energy conservation for advection and advection--diffusion equations, Navier--Stokes and Euler equations, see for instance \cites{DPL89,CET,A04,CCFS08,LL19,BCC23,BCC24}. 
We also refer to \cites{BCC23,Gautam,BrueN21,LL08,LLuo19,NavS22} for some recent results with vanishing diffusivity coefficient for advection{--}diffusion equation and references therein.
Here we provide a brief overview of some techniques for the non-uniqueness of solutions in various contexts.
\\
\\
The convex integration technique is a very general technique that is used to prove non-uniqueness of solutions in mild spaces for several equations. This technique, introduced in \cite{DLL09,DLL13} as cited above, has been intensively studied to solve important problems regarding non-uniqueness and non conservation of energy for solutions in fluid dynamics, see for instance some important results for 3D Euler equations \cites{Is18,NV22}. Particularly, we point out that very recently in \cite{GKN23} Giri, Kwon and Novack  prove, with the convex integration technique, that there are 3D Euler equations solutions which dissipate the energy and satisfy the local energy inequality. Taking inspiration from this, it might be possible to impose a local energy inequality on the  non-unique solutions for the advection--diffusion equation given by Modena and Sattig \cite{MSa20}. Finally, 
 in the pioneering paper  \cite{BV18} Buckmaster and Vicol prove the non--uniqueness of weak solutions for the 3D Navier--Stokes equations, paving the way for several results on non--uniqueness with Laplacian (or fractional Laplacian) regularization, see for instance \cite{BMS21,DR19,CDRS22,MS18}.
  
  Very recently, Vishik in \cite{Vishik1,Vishik2} introduce the idea of proving non--uniqueness via spectral method, see also  \cite{ABCDL+,CFMS24}. Vishik proves the existence of a  self--similar profile which admits an eigenvalue with positive real part 
  for the linearized  operator of 2D Euler equations. Eventually, Vishik proves the non--uniqueness of 2D Euler equations solutions  with a suitable body force  in sharp classes of regularity predicted by Yudovich \cite{Yudovich1963}. These solutions also enjoy the local energy equality taking into account also the force in the energy balance. 
   Albritton, Bru\'e and Colombo use this technique in \cite{ABC22} to prove examples of non--uniqueness of  suitable Leray--Hopf solutions for 3D Navier--Stokes with a body force, which in particular satisfy  the local energy inequality, see also \cites{ABC23,AC23} for related results. This technique is tailored to specific non--linear problems that admit an eigenvalue with positive real part. 
   
   Another technique is to find explicit examples, not relying on convex integration, where non--uniqueness of solutions follows more from a Lagrangian perspective. This technique is still the optimal one in some contexts and recently has led to the solution of some open problems in the advection equation setting.
   In \cite{K23} Kumar proves that there are H\"older continuous Sobolev velocity fields for which non-uniqueness of trajectories holds on the full torus. The H\"older continuity of the velocity field is sharp and completely new with respect to previous works obtained in convex integration. In \cite{BCK24tr} Bru\'e, Colombo and Kumar {further developed} this construction introducing new ideas to obtain non-unique positive solutions to the advection equation with Sobolev velocity fields in sharp range of regularity.   This explicit velocity field has also inspired the same authors in \cite{BCK24Eu} to introduce a new ``asynchronization'' idea for the building blocks  in the convex integration technique to  solve a longstanding open problem of non-uniqueness of 2D Euler equations solutions with integrable vorticity. The full answer to non-uniqueness of 2D Euler equations solutions with $L^p$ vorticity ($p< \infty$ arbitrary but fixed) is still open, see the following results going into this direction \cites{BS21,BM20,Vishik1,Vishik2,ABCDLGJH}.
\\
\\
The strategies of the proofs of Theorem \ref{thm:NU_ADE_loops}  and Theorem \ref{thm:NU_ADE_chess}  are similar and closely align to the last aforementioned technique.  The proofs are based on the study of the Lagrangian flow and Stochastic flow of explicit velocity fields. The construction of this velocity field does not rely on convex integration, though it is inspired by the use of super-exponential frequency separation as in convex integration. 
We highlight that our building block for the construction of the velocity field in Theorem \ref{thm:NU_ADE_loops} can be made a stationary solution of 2D Euler equations, making it radial and acting only on the angular component.

The construction of our  velocity field  to prove  Theorem \ref{thm:NU_ADE_loops} is completely new whereas the one  to prove Theorem \ref{thm:NU_ADE_chess}  is a rescaled version  of the velocity field constructed in \cite{CCS23} where it is used to prove anomalous dissipation and lack of selection via vanishing diffusivity for the 2D advection equation under H\"older regular velocity fields. { This velocity field takes inspirations from previous well known examples, see \cite{Depauw,A78}.}

\subsection{Heuristics and idea of the proofs} \label{subsec:heuristic-idea}

Here we primarily discuss the idea behind the proof of Theorem \ref{thm:NU_ADE_loops}, even although the approach for the other theorem is similar. { For more details on the main ideas behind the construction of the velocity field for Theorem \ref{thm:NU_ADE_loops}, we refer to Subsection \ref{subsec:idea-loop}, and for the construction in Theorem \ref{thm:NU_ADE_chess}, we refer to Subsection \ref{subsec:idea-chess}.}

This heuristics is inspired by anomalous dissipation phenomena, see for instance \cites{DEIJ22,CCS23,AV23,JS23,EL23,R23,BBS23,JS24}, but it is even more pathological, since here we fix the diffusivity parameter $\kappa=1$.

The proof of this theorem requires a novel  construction of a divergence free velocity field $b$. The construction is completely explicit and the non-uniqueness is naturally coming from the advection equation and the  study of a system with high  P\'eclet number 
\begin{equation} \label{Peclet}
{\rm  Pe =  \frac{LV}{\kappa} = \frac{\sfrac{L^2}{\kappa}}{\sfrac{L}{V}} =  \frac{ diffusion \,  \, time}{ advection \, \, time}  \gg 1 } \,,
\end{equation}
where $L$ and $V$ are respectively the characteristic length and velocity of the system and $\kappa >0 $ is the diffusivity parameter that in our case is $\kappa=1$. 
In this case, heuristically, one expects the solution to the  advection{--}diffusion equation to  behave similarly to the solution to the advection equation. More precisely, we introduce the advection equation as the following Cauchy problem
\begin{equation}\label{eq:TE}\tag{TE}
    \begin{cases}
        \partial_t \solTE + b  \cdot \nabla  \solTE  = 0 & \text{ on } [0,T] \times \T^2,\\
        \solTE(0, \cdot ) = \TEin(\cdot ) \in L^\infty & \text{ on }\T^2.
    \end{cases}
\end{equation} 
with a given velocity field $b : [0,T] \times \T^2 \to \R^2$. The main strategy is to approximate the velocity field $b$ with a sequence of divergence free velocity fields $b_n \to b $ as $n \to \infty$ in $L^\infty_t L^p_x$, $b_n \in L^\infty_t BV_x $ for a given $p< 2$ arbitrary but fixed.
We represent the solution to the advection{--}diffusion equation with  the Feynman Kac formula with backward Stochastic flow $\bY^n_{{t}, 0}$ of $b_n$  and the solution to the advection equation with backward regular Lagrangian flow $\bX^n_{t,0}$ of $b_n$ as 
\begin{equation}\label{eq:back_repres}
	\solAD_n (t,x) =  \E \big[ \ADin\big( \bY^n_{{t}, 0} (x,\cdot ) \big) \big] \qquad \text{and} \qquad \solTE_n (t,x) = \TEin\big( \bX^n_{t,0} (x)\big)\,, 
\end{equation}
and we impose the same initial datum $\TEin = \ADin$. The following conditions are sufficient to prove the existence of two distinct solutions to the advection{--}diffusion equation: 
\begin{itemize}
\item The sequences of solutions $\{\rho_{2n} \}_{n \in \N}$, $\{\rho_{2n+1} \}_{n \in \N}$  to the advection equation, with velocity fields $\{ b_{2n}\}_{n \in  \N}$, $\{ b_{2n +1}\}_{n \in  \N}$ respectively, satisfy the following quantitative non--uniqueness condition:
$$\rho_{2n_k} \overset{*}{\rightharpoonup} \rho_{\text{even}}  \neq \rho_{\text{odd}}  \overset{*}{\leftharpoonup} \rho_{2n_k+1} \,, \qquad \text{as } n_k \to \infty$$
and 
\begin{equation} \label{intro:1}
\| \rho_{\text{even}}  (T, \cdot ) - \rho_{\text{odd}} (T, \cdot ) \|_{L^1 (\T^2)} \geq 2\varepsilon >0 \,, \quad \text{ for some } \, T, \varepsilon >0\,.
\end{equation}
\item The initial data are Lipschitz and the backward Regular Lagrangian flow and the backward Stochastic Flow satisfy the quantitative stability inequality with the same constant $\varepsilon >0$ given above
\begin{equation} \label{intro:2}
\| \rho_{n}  (T, \cdot ) - \theta_{n} (T, \cdot ) \|_{L^1 (\T^2)} \leq \| \ADin \|_{W^{1, \infty}} \mathbb{E} \| \bY^n_{{T}, 0} - \bX^n_{{T}, 0}   \|_{L^1 (\T^2)} < \varepsilon  \,.
\end{equation} 
\end{itemize}
Indeed, thanks to \eqref{intro:1} and \eqref{intro:2}, up to not relabelled even and odd subsequences, we have that any $L^\infty$--weak* limit solution of   the odd subsequence is different to any $L^\infty$-weak* limit solution of the even subsequence, namely
$$ \theta_{2 n_k} \overset{*}{\rightharpoonup} \theta_{\text{even}}  \neq \theta_{\text{odd}}  \overset{*}{\leftharpoonup} \theta_{2n_k +1} \,, \qquad \text{as } n_k \to \infty\,.$$

 We  construct the velocity field $b$ for Theorem \ref{thm:NU_ADE_loops} as a limit   of a sequence of velocity fields $\{ b_n \}_{n \in \N}$.
 { The non--uniqueness of solutions to the advection equation is given by the fact that Lagrangian trajectories of $b$ are pushed in a very fast way towards the center, that is the unique singularity of the velocity field, i.e. $b \in L^\infty ([0,1]; BV_{\text{loc}} (\T^2 \setminus \{ 0 \} )$, but $b \notin L^\infty ([0,1]; BV (\T^2))$.
To make the time of Lagrangian trajectories approaching the origin very short (in a super-exponential way) while maintaining the divergence-free condition, we construct the sequence of velocity fields 
$b_n$ as a finite sum of counter-clockwise rotating velocity fields with support on a finite union of pipes, as shown in Figure \ref{fig:loopstructure}.}
 \begin{figure}[htbp]
		 \begin{tikzpicture}[scale=2.14]
        \draw[->] (-1,0)--(1,0);
        \draw[->] (0,-1)--(0,1);
        \filldraw[fill=red!30!white, fill opacity=0.5, draw=red] (-0.7,0.7) rectangle (-0.8,0.8);
        \filldraw[fill=vert!30!white, fill opacity=0.5, draw=vert] (0.7,0.7) rectangle (0.8,0.8);
        \filldraw[fill=blue!30!white, fill opacity=0.5, draw=blue] (-0.7,-0.7) rectangle (-0.8,-0.8);
        \draw[dotted,->](-0.7,0.7)--(-0.3,0.3);
        \draw[dotted,->](-0.3,0.3)--(-0.01,0.01);
        \draw[dotted,->](0,0)--(0.3,0.3);
        \draw[dotted,->](0.3,0.3)--(0.7,0.7);
        \draw[dotted,->](0,0)--(-0.3,-0.3);
        \draw[dotted,->](0.3,0.3)--(-0.7,-0.7);
        \draw node at (0.9,-0.1) {$x$};
        \draw node at (0.1,0.9) {$y$};
    \end{tikzpicture}
    \caption{{In red we represent an approximation of the support of the initial datum to the \eqref{eq:ADE} we use for Theorem \ref{thm:NU_ADE_loops}.  We depict in green and in blue the supports of two distinct solutions to the advection equation at a suitable time $T_n > 0$.  The dotted lines show an approximation of Lagrangian trajectories of the velocity field $b$. The bifurcation (i.e. non-uniqueness) of Lagrangian trajectories happens only at $x=y=0$. Indeed,  the origin is the only singularity of the velocity field, i.e. $b \in L^\infty ((0,1) ; BV_{\text{loc}}( \T^2 \setminus \{0\} )$. This figure is qualitatively preserved by considering advection--diffusion equation instead of advection equation thanks to the high P\'eclet number of the system.}}
    \label{fig:nu_trajectories}
\end{figure}

\begin{figure}[!htb]
    \centering
   \begin{subfigure}{0.45\textwidth}
    \begin{tikzpicture}[scale=1]
        \draw[->] (-3,0) -- (3,0);
        \draw[->] (0,-3) -- (0,3);
	\def \i {0.3}
	\draw(-8*\i,6*\i)--(-8*\i,4*\i) arc(-180:-90:2*\i)--(-4*\i,2*\i) arc(-90:0:2*\i)--(-2*\i,6*\i)arc(0:90:2*\i) -- (-6*\i,8*\i) arc(90:180:2*\i);
	\draw(-7*\i,6*\i)--(-7*\i,4*\i) arc(-180:-90:\i)--(-4*\i,3*\i) arc(-90:0:\i)--(-3*\i,6*\i)arc(0:90:\i) -- (-6*\i,7*\i) arc(90:180:\i);
	%
	\begin{scope}[xscale=-1]
	\draw(-8*\i,6*\i)--(-8*\i,4*\i) arc(-180:-90:2*\i)--(-4*\i,2*\i) arc(-90:0:2*\i)--(-2*\i,6*\i)arc(0:90:2*\i) -- (-6*\i,8*\i) arc(90:180:2*\i);
	\draw(-7*\i,6*\i)--(-7*\i,4*\i) arc(-180:-90:\i)--(-4*\i,3*\i) arc(-90:0:\i)--(-3*\i,6*\i)arc(0:90:\i) -- (-6*\i,7*\i) arc(90:180:\i);
	\end{scope}
	%
	\begin{scope}[yscale=-1]
	\draw(-8*\i,6*\i)--(-8*\i,4*\i) arc(-180:-90:2*\i)--(-4*\i,2*\i) arc(-90:0:2*\i)--(-2*\i,6*\i)arc(0:90:2*\i) -- (-6*\i,8*\i) arc(90:180:2*\i);
	\draw(-7*\i,6*\i)--(-7*\i,4*\i) arc(-180:-90:\i)--(-4*\i,3*\i) arc(-90:0:\i)--(-3*\i,6*\i)arc(0:90:\i) -- (-6*\i,7*\i) arc(90:180:\i);
	\end{scope}
	%
	\begin{scope}[xscale=-1, yscale=-1]
	\draw(-8*\i,6*\i)--(-8*\i,4*\i) arc(-180:-90:2*\i)--(-4*\i,2*\i) arc(-90:0:2*\i)--(-2*\i,6*\i)arc(0:90:2*\i) -- (-6*\i,8*\i) arc(90:180:2*\i);
	\draw(-7*\i,6*\i)--(-7*\i,4*\i) arc(-180:-90:\i)--(-4*\i,3*\i) arc(-90:0:\i)--(-3*\i,6*\i)arc(0:90:\i) -- (-6*\i,7*\i) arc(90:180:\i);
	\end{scope}
	%
	\foreach \g in {0.1}
	\draw[red] (-16*\g,14*\g)--(-16*\g,4*\g) arc(-180:-90:2*\g)--(14*\g,2*\g) arc(-90:0:2*\g)--(16*\g,14*\g)arc(0:90:2*\g) -- (-14*\g,16*\g) arc(90:180:2*\g);
	\foreach \g in {0.1}
	\draw[red] (-15*\g,14*\g)--(-15*\g,4*\g) arc(-180:-90:\g)--(14*\g,3*\g) arc(-90:0:\g)--(15*\g,14*\g)arc(0:90:\g) -- (-14*\g,15*\g) arc(90:180:\g);
    \end{tikzpicture}
    \caption{$n=0$}
    \end{subfigure}
\hfill
  \begin{subfigure}{0.45\textwidth}
    \begin{tikzpicture}[scale=1]
        \draw[->] (-3,0) -- (3,0);
        \draw[->] (0,-3) -- (0,3);
	\def \i {0.3}
	\draw(-8*\i,6*\i)--(-8*\i,4*\i) arc(-180:-90:2*\i)--(-4*\i,2*\i) arc(-90:0:2*\i)--(-2*\i,6*\i)arc(0:90:2*\i) -- (-6*\i,8*\i) arc(90:180:2*\i);
	\draw(-7*\i,6*\i)--(-7*\i,4*\i) arc(-180:-90:\i)--(-4*\i,3*\i) arc(-90:0:\i)--(-3*\i,6*\i)arc(0:90:\i) -- (-6*\i,7*\i) arc(90:180:\i);
	\foreach \a in {0.1 }
	\draw(-16*\a,14*\a)--(-16*\a,4*\a) arc(-180:-90:2*\a)--(-4*\a,2*\a) arc(-90:0:2*\a)--(-2*\a,14*\a)arc(0:90:2*\a) -- (-14*\a,16*\a) arc(90:180:2*\a);
	\foreach \a in {0.1}
	\draw(-15*\a,14*\a)--(-15*\a,4*\a) arc(-180:-90:\a)--(-4*\a,3*\a) arc(-90:0:\a)--(-3*\a,14*\a)arc(0:90:\a) -- (-14*\a,15*\a) arc(90:180:\a);
	%
	\begin{scope}[xscale=-1]
	\def \i {0.3}
	\draw(-8*\i,6*\i)--(-8*\i,4*\i) arc(-180:-90:2*\i)--(-4*\i,2*\i) arc(-90:0:2*\i)--(-2*\i,6*\i)arc(0:90:2*\i) -- (-6*\i,8*\i) arc(90:180:2*\i);
	\draw(-7*\i,6*\i)--(-7*\i,4*\i) arc(-180:-90:\i)--(-4*\i,3*\i) arc(-90:0:\i)--(-3*\i,6*\i)arc(0:90:\i) -- (-6*\i,7*\i) arc(90:180:\i);
	\foreach \a in {0.1 }
	\draw(-16*\a,14*\a)--(-16*\a,4*\a) arc(-180:-90:2*\a)--(-4*\a,2*\a) arc(-90:0:2*\a)--(-2*\a,14*\a)arc(0:90:2*\a) -- (-14*\a,16*\a) arc(90:180:2*\a);
	\foreach \a in {0.1 }
	\draw(-15*\a,14*\a)--(-15*\a,4*\a) arc(-180:-90:\a)--(-4*\a,3*\a) arc(-90:0:\a)--(-3*\a,14*\a)arc(0:90:\a) -- (-14*\a,15*\a) arc(90:180:\a);
	\end{scope}
	%
	\begin{scope}[yscale=-1]
	\def \i {0.3}
	\draw(-8*\i,6*\i)--(-8*\i,4*\i) arc(-180:-90:2*\i)--(-4*\i,2*\i) arc(-90:0:2*\i)--(-2*\i,6*\i)arc(0:90:2*\i) -- (-6*\i,8*\i) arc(90:180:2*\i);
	\draw(-7*\i,6*\i)--(-7*\i,4*\i) arc(-180:-90:\i)--(-4*\i,3*\i) arc(-90:0:\i)--(-3*\i,6*\i)arc(0:90:\i) -- (-6*\i,7*\i) arc(90:180:\i);
	\foreach \a in {0.1 }
	\draw(-16*\a,14*\a)--(-16*\a,4*\a) arc(-180:-90:2*\a)--(-4*\a,2*\a) arc(-90:0:2*\a)--(-2*\a,14*\a)arc(0:90:2*\a) -- (-14*\a,16*\a) arc(90:180:2*\a);
	\foreach \a in {0.1 }
	\draw(-15*\a,14*\a)--(-15*\a,4*\a) arc(-180:-90:\a)--(-4*\a,3*\a) arc(-90:0:\a)--(-3*\a,14*\a)arc(0:90:\a) -- (-14*\a,15*\a) arc(90:180:\a);
	\end{scope}
	%
	\begin{scope}[xscale=-1, yscale=-1]
	\def \i {0.3}
	\draw(-8*\i,6*\i)--(-8*\i,4*\i) arc(-180:-90:2*\i)--(-4*\i,2*\i) arc(-90:0:2*\i)--(-2*\i,6*\i)arc(0:90:2*\i) -- (-6*\i,8*\i) arc(90:180:2*\i);
	\draw(-7*\i,6*\i)--(-7*\i,4*\i) arc(-180:-90:\i)--(-4*\i,3*\i) arc(-90:0:\i)--(-3*\i,6*\i)arc(0:90:\i) -- (-6*\i,7*\i) arc(90:180:\i);
	\foreach \a in {0.1 }
	\draw(-16*\a,14*\a)--(-16*\a,4*\a) arc(-180:-90:2*\a)--(-4*\a,2*\a) arc(-90:0:2*\a)--(-2*\a,14*\a)arc(0:90:2*\a) -- (-14*\a,16*\a) arc(90:180:2*\a);
	\foreach \a in {0.1 }
	\draw(-15*\a,14*\a)--(-15*\a,4*\a) arc(-180:-90:\a)--(-4*\a,3*\a) arc(-90:0:\a)--(-3*\a,14*\a)arc(0:90:\a) -- (-14*\a,15*\a) arc(90:180:\a);
	\end{scope}
	%
	\foreach \g in {0.03}
	\draw[red] (-16*\g,14*\g)--(-16*\g,-14*\g) arc(-180:-90:2*\g)--(-4*\g,-16*\g) arc(-90:0:2*\g)--(-2*\g,14*\g)arc(0:90:2*\g) -- (-14*\g,16*\g) arc(90:180:2*\g);
	\foreach \g in {0.03}
	\draw[red] (-15*\g,14*\g)--(-15*\g,-14*\g) arc(-180:-90:\g)--(-4*\g,-15*\g) arc(-90:0:\g)--(-3*\g,14*\g)arc(0:90:\g) -- (-14*\g,15*\g) arc(90:180:\g);
    \end{tikzpicture}
    \caption{$n=1$}
    \end{subfigure}
    \begin{subfigure}{0.45\textwidth}
    \begin{tikzpicture}[scale=1]
        \draw[->] (-3,0) -- (3,0);
        \draw[->] (0,-3) -- (0,3);
	\def \i {0.3}
	\draw(-8*\i,6*\i)--(-8*\i,4*\i) arc(-180:-90:2*\i)--(-4*\i,2*\i) arc(-90:0:2*\i)--(-2*\i,6*\i)arc(0:90:2*\i) -- (-6*\i,8*\i) arc(90:180:2*\i);
	\draw(-7*\i,6*\i)--(-7*\i,4*\i) arc(-180:-90:\i)--(-4*\i,3*\i) arc(-90:0:\i)--(-3*\i,6*\i)arc(0:90:\i) -- (-6*\i,7*\i) arc(90:180:\i);
	\foreach \a in {0.1, 0.03 }
	\draw(-16*\a,14*\a)--(-16*\a,4*\a) arc(-180:-90:2*\a)--(-4*\a,2*\a) arc(-90:0:2*\a)--(-2*\a,14*\a)arc(0:90:2*\a) -- (-14*\a,16*\a) arc(90:180:2*\a);
	\foreach \a in {0.1, 0.03 }
	\draw(-15*\a,14*\a)--(-15*\a,4*\a) arc(-180:-90:\a)--(-4*\a,3*\a) arc(-90:0:\a)--(-3*\a,14*\a)arc(0:90:\a) -- (-14*\a,15*\a) arc(90:180:\a);
	%
	\begin{scope}[xscale=-1]
	\def \i {0.3}
	\draw(-8*\i,6*\i)--(-8*\i,4*\i) arc(-180:-90:2*\i)--(-4*\i,2*\i) arc(-90:0:2*\i)--(-2*\i,6*\i)arc(0:90:2*\i) -- (-6*\i,8*\i) arc(90:180:2*\i);
	\draw(-7*\i,6*\i)--(-7*\i,4*\i) arc(-180:-90:\i)--(-4*\i,3*\i) arc(-90:0:\i)--(-3*\i,6*\i)arc(0:90:\i) -- (-6*\i,7*\i) arc(90:180:\i);
	\foreach \a in {0.1, 0.03 }
	\draw(-16*\a,14*\a)--(-16*\a,4*\a) arc(-180:-90:2*\a)--(-4*\a,2*\a) arc(-90:0:2*\a)--(-2*\a,14*\a)arc(0:90:2*\a) -- (-14*\a,16*\a) arc(90:180:2*\a);
	\foreach \a in {0.1, 0.03 }
	\draw(-15*\a,14*\a)--(-15*\a,4*\a) arc(-180:-90:\a)--(-4*\a,3*\a) arc(-90:0:\a)--(-3*\a,14*\a)arc(0:90:\a) -- (-14*\a,15*\a) arc(90:180:\a);
	\end{scope}
	%
	\begin{scope}[yscale=-1]
	\def \i {0.3}
	\draw(-8*\i,6*\i)--(-8*\i,4*\i) arc(-180:-90:2*\i)--(-4*\i,2*\i) arc(-90:0:2*\i)--(-2*\i,6*\i)arc(0:90:2*\i) -- (-6*\i,8*\i) arc(90:180:2*\i);
	\draw(-7*\i,6*\i)--(-7*\i,4*\i) arc(-180:-90:\i)--(-4*\i,3*\i) arc(-90:0:\i)--(-3*\i,6*\i)arc(0:90:\i) -- (-6*\i,7*\i) arc(90:180:\i);
	\foreach \a in {0.1, 0.03 }
	\draw(-16*\a,14*\a)--(-16*\a,4*\a) arc(-180:-90:2*\a)--(-4*\a,2*\a) arc(-90:0:2*\a)--(-2*\a,14*\a)arc(0:90:2*\a) -- (-14*\a,16*\a) arc(90:180:2*\a);
	\foreach \a in {0.1, 0.03 }
	\draw(-15*\a,14*\a)--(-15*\a,4*\a) arc(-180:-90:\a)--(-4*\a,3*\a) arc(-90:0:\a)--(-3*\a,14*\a)arc(0:90:\a) -- (-14*\a,15*\a) arc(90:180:\a);
	\end{scope}
	%
	\begin{scope}[xscale=-1, yscale=-1]
	\def \i {0.3}
	\draw(-8*\i,6*\i)--(-8*\i,4*\i) arc(-180:-90:2*\i)--(-4*\i,2*\i) arc(-90:0:2*\i)--(-2*\i,6*\i)arc(0:90:2*\i) -- (-6*\i,8*\i) arc(90:180:2*\i);
	\draw(-7*\i,6*\i)--(-7*\i,4*\i) arc(-180:-90:\i)--(-4*\i,3*\i) arc(-90:0:\i)--(-3*\i,6*\i)arc(0:90:\i) -- (-6*\i,7*\i) arc(90:180:\i);
	\foreach \a in {0.1, 0.03 }
	\draw(-16*\a,14*\a)--(-16*\a,4*\a) arc(-180:-90:2*\a)--(-4*\a,2*\a) arc(-90:0:2*\a)--(-2*\a,14*\a)arc(0:90:2*\a) -- (-14*\a,16*\a) arc(90:180:2*\a);
	\foreach \a in {0.1, 0.03  }
	\draw(-15*\a,14*\a)--(-15*\a,4*\a) arc(-180:-90:\a)--(-4*\a,3*\a) arc(-90:0:\a)--(-3*\a,14*\a)arc(0:90:\a) -- (-14*\a,15*\a) arc(90:180:\a);
	\end{scope}
	%
	\foreach \g in {0.01}
	\draw[red] (-16*\g,14*\g)--(-16*\g,4*\g) arc(-180:-90:2*\g)--(14*\g,2*\g) arc(-90:0:2*\g)--(16*\g,14*\g)arc(0:90:2*\g) -- (-14*\g,16*\g) arc(90:180:2*\g);
	\foreach \g in {0.01}
	\draw[red] (-15*\g,14*\g)--(-15*\g,4*\g) arc(-180:-90:\g)--(14*\g,3*\g) arc(-90:0:\g)--(15*\g,14*\g)arc(0:90:\g) -- (-14*\g,15*\g) arc(90:180:\g);
    \end{tikzpicture}
    \caption{$n=2$}
    \end{subfigure}
\hfill
    \begin{subfigure}{0.45\textwidth}
    \begin{tikzpicture}[scale=1]
        \draw[->] (-3,0) -- (3,0);
        \draw[->] (0,-3) -- (0,3);
	\def \i {0.3}
	\draw(-8*\i,6*\i)--(-8*\i,4*\i) arc(-180:-90:2*\i)--(-4*\i,2*\i) arc(-90:0:2*\i)--(-2*\i,6*\i)arc(0:90:2*\i) -- (-6*\i,8*\i) arc(90:180:2*\i);
	\draw(-7*\i,6*\i)--(-7*\i,4*\i) arc(-180:-90:\i)--(-4*\i,3*\i) arc(-90:0:\i)--(-3*\i,6*\i)arc(0:90:\i) -- (-6*\i,7*\i) arc(90:180:\i);
	\foreach \a in {0.1, 0.03, 0.01 }
	\draw(-16*\a,14*\a)--(-16*\a,4*\a) arc(-180:-90:2*\a)--(-4*\a,2*\a) arc(-90:0:2*\a)--(-2*\a,14*\a)arc(0:90:2*\a) -- (-14*\a,16*\a) arc(90:180:2*\a);
	\foreach \a in {0.1, 0.03, 0.01, 0.003 }
	\draw(-15*\a,14*\a)--(-15*\a,4*\a) arc(-180:-90:\a)--(-4*\a,3*\a) arc(-90:0:\a)--(-3*\a,14*\a)arc(0:90:\a) -- (-14*\a,15*\a) arc(90:180:\a);
	%
	\begin{scope}[xscale=-1]
	\def \i {0.3}
	\draw(-8*\i,6*\i)--(-8*\i,4*\i) arc(-180:-90:2*\i)--(-4*\i,2*\i) arc(-90:0:2*\i)--(-2*\i,6*\i)arc(0:90:2*\i) -- (-6*\i,8*\i) arc(90:180:2*\i);
	\draw(-7*\i,6*\i)--(-7*\i,4*\i) arc(-180:-90:\i)--(-4*\i,3*\i) arc(-90:0:\i)--(-3*\i,6*\i)arc(0:90:\i) -- (-6*\i,7*\i) arc(90:180:\i);
	\foreach \a in {0.1, 0.03, 0.01 }
	\draw(-16*\a,14*\a)--(-16*\a,4*\a) arc(-180:-90:2*\a)--(-4*\a,2*\a) arc(-90:0:2*\a)--(-2*\a,14*\a)arc(0:90:2*\a) -- (-14*\a,16*\a) arc(90:180:2*\a);
	\foreach \a in {0.1, 0.03, 0.01, 0.003 }
	\draw(-15*\a,14*\a)--(-15*\a,4*\a) arc(-180:-90:\a)--(-4*\a,3*\a) arc(-90:0:\a)--(-3*\a,14*\a)arc(0:90:\a) -- (-14*\a,15*\a) arc(90:180:\a);
	\end{scope}
	%
	\begin{scope}[yscale=-1]
	\def \i {0.3}
	\draw(-8*\i,6*\i)--(-8*\i,4*\i) arc(-180:-90:2*\i)--(-4*\i,2*\i) arc(-90:0:2*\i)--(-2*\i,6*\i)arc(0:90:2*\i) -- (-6*\i,8*\i) arc(90:180:2*\i);
	\draw(-7*\i,6*\i)--(-7*\i,4*\i) arc(-180:-90:\i)--(-4*\i,3*\i) arc(-90:0:\i)--(-3*\i,6*\i)arc(0:90:\i) -- (-6*\i,7*\i) arc(90:180:\i);
	\foreach \a in {0.1, 0.03, 0.01  }
	\draw(-16*\a,14*\a)--(-16*\a,4*\a) arc(-180:-90:2*\a)--(-4*\a,2*\a) arc(-90:0:2*\a)--(-2*\a,14*\a)arc(0:90:2*\a) -- (-14*\a,16*\a) arc(90:180:2*\a);
	\foreach \a in {0.1, 0.03, 0.01, 0.003 }
	\draw(-15*\a,14*\a)--(-15*\a,4*\a) arc(-180:-90:\a)--(-4*\a,3*\a) arc(-90:0:\a)--(-3*\a,14*\a)arc(0:90:\a) -- (-14*\a,15*\a) arc(90:180:\a);
	\end{scope}
	%
	\begin{scope}[xscale=-1, yscale=-1]
	\def \i {0.3}
	\draw(-8*\i,6*\i)--(-8*\i,4*\i) arc(-180:-90:2*\i)--(-4*\i,2*\i) arc(-90:0:2*\i)--(-2*\i,6*\i)arc(0:90:2*\i) -- (-6*\i,8*\i) arc(90:180:2*\i);
	\draw(-7*\i,6*\i)--(-7*\i,4*\i) arc(-180:-90:\i)--(-4*\i,3*\i) arc(-90:0:\i)--(-3*\i,6*\i)arc(0:90:\i) -- (-6*\i,7*\i) arc(90:180:\i);
	\foreach \a in {0.1, 0.03, 0.01 }
	\draw(-16*\a,14*\a)--(-16*\a,4*\a) arc(-180:-90:2*\a)--(-4*\a,2*\a) arc(-90:0:2*\a)--(-2*\a,14*\a)arc(0:90:2*\a) -- (-14*\a,16*\a) arc(90:180:2*\a);
	\foreach \a in {0.1, 0.03, 0.01, 0.003 }
	\draw(-15*\a,14*\a)--(-15*\a,4*\a) arc(-180:-90:\a)--(-4*\a,3*\a) arc(-90:0:\a)--(-3*\a,14*\a)arc(0:90:\a) -- (-14*\a,15*\a) arc(90:180:\a);
	\end{scope}
	%
	\foreach \g in {0.003}
	\draw[red] (-15*\g,14*\g)--(-15*\g,-14*\g) arc(-180:-90:\g)--(-4*\g,-15*\g) arc(-90:0:\g)--(-3*\g,14*\g)arc(0:90:\g) -- (-14*\g,15*\g) arc(90:180:\g);
    \end{tikzpicture}
    \caption{$n=3$}
    \end{subfigure}
    \caption{The support of the velocity fields $b_n$ for $n=0,1,2,3$, with the gluing pipe in red.}
    \label{fig:loopstructure}
\end{figure}
{
The aim is to ensure that the same qualitative non-uniqueness of solutions for the advection equation, as described in Figure \ref{fig:nu_trajectories}, also holds true for the advection--diffusion equation. This is made possible by the extremely fast time of approaching the origin, which prevents
$\Delta$ from having enough time to regularize the system, due to the high P\'eclet number of the system. }
{In the limit $n \to \infty$,} some density flowing in from the upper left part could flow out towards the upper right (along the even sequence of velocity fields $\{ b_{2n} \}_{n \in \N}$) or lower left part (along the odd sequence of velocity fields $\{ b_{2n+1} \}_{n \in \N}$), thus yielding non-uniqueness of solutions. This resembles the idea of a very degenerate hyperbolic point.

The proof of \eqref{intro:1} follows straightforwardly from the construction of the velocity field, and it is quite standard to observe such a property. For the proof of \eqref{intro:2}, we require super-exponential separation of scales, ensuring that the smaller scale (smaller pipe) is unaffected by the larger scale (larger pipe). This idea is inspired by the convex integration technique.  We enumerate the pipes in the first quadrant,  $Q_0, Q_1, \ldots, Q_n$, from the  largest pipe $Q_0$ to the smallest $Q_n$ of the approximated velocity field $b_n$ and we define the $v_q = \| b_n \|_{L^\infty (Q_q)}$. We suppose that the width of the pipe $Q_q$ is comparable to length, denoted as $a_q$, with $a_n \to 0$ as $n \to \infty$ so that we have a single critical point, that is in the origin. One can expect that condition \eqref{Peclet} is somehow necessary to have the stability \eqref{intro:2}. Specifically during an advection time interval $\Delta t_q = \frac{a_q}{v_q}$ (time for a particle to travel all around the $q-th$ pipe) a Brownian motion should move less then $a_q$ to preserve  the geometry.  More precisely \eqref{Peclet} can be rewritten as 
\begin{align} \label{eq:intro-peclet}
\sqrt{\Delta t_q } \ll a_q \qquad \Longleftrightarrow \qquad v_q \gg a_q^{-1} \,. 
\end{align}

If  one computes the $L^p$ norm of $b_n$ in the q-th pipe one has
$$\| b_n \|_{L^p (Q_q)}^p  \lesssim a_q^2 v_q^{p} \to 0 \qquad  \Longrightarrow \qquad p< 2\,.$$
This is the constraint on the integrability of our velocity field, and it represents the sharp  integrability achievable to prove non-uniqueness of solutions to the advection--diffusion equation.
\\
\\
The local energy inequality and the regularity of the solutions follow from the fact that we argue using suitable ``regularization'' of the velocity field which provides a uniform control of the approximate weak solutions in $L^2_t H^1_x \cap L^\infty_{t,x}$, which guarantee a weak limit in $L^2_t H^1_x \cap L^\infty_{t,x}$. Similarly as   in the proof of the existence of suitable Leray--Hopf solutions for 3D Navier--Stokes equations, see \cites{leray,LP}, we can prove a local energy inequality thanks to an Aubin--Lions lemma \cites{A63,L69}, see Section \ref{sec:concluding-proof} for details. 

The quantitative stability in equation \eqref{intro:2} is { more achievable} to prove if the stochastic flow and the regular Lagrangian flow transition from ``small scales'' to ``large scales''. {  The reasoning behind this is that it is more feasible to control an error of the same magnitude as the small scale at the large scale. However, an error of the same magnitude as the large scale would represent a significant error at the small scale, unless there is some contraction during the dynamics that causes the error to decrease.}
 
  However in our situation, the forward flow map of the regularized velocity field transitions from large scales to small scales and back to large scales. Therefore, it is important to use both the backward flow (for the part of the proof from large to small scales) see Lemma \ref{lem:loopstabback} and the forward flow (for the part of the proof from small to large scales) see Lemma \ref{lem:loopstabfor}. This is the reason why we also use the other representation formula for such solutions through the pushforward of the forward Stochastic flow $\bY_{0,t}$ and the forward Regular Lagrangian flow $\bX_{0,t}$   respectively, namely 
 \begin{equation}\label{eq:forw_repres}
 	\solAD (t,x)  = \bY_{0,t} (\cdot , \cdot )_{\#}  \ADin \mathcal{L}^2  \otimes \mathbb{P} \qquad \text{and} \qquad \solTE (t,x) = \bX_{0,t} (\cdot )_{\#} \TEin \mathcal{L}^2 \,.
 \end{equation}
 {Notice that for the advection equation, we have two equivalent representation formulas for the solutions with  the backward Lagrangian flow \eqref{eq:back_repres} and the forward Lagrangian flow \eqref{eq:forw_repres} under suitable regularity assumptions on $b$. This is thanks to the fact that the vector field $b$ is divergence--free.}
Passing from the forward representation formula to the backward representation formula is a technical point that is given in Lemma \ref{lem:claims} and allows to combine the backward stability (Lemma \ref{lem:loopstabback}) to the forward stability (Lemma \ref{lem:loopstabfor}).

\section*{Acknowledgements} 
MS has been   supported by  the Swiss State Secretariat for Education, Research and Innovation (SERI) under contract number MB22.00034 during the first part of this work. 
TM is supported by the Swiss National Science Foundation within the scope of the NCCR SwissMAP under the grant number 205607.

\section{Preliminaries}
We will extensively use the description of the advection equation and of the advection--diffusion equation by the Regular Langrangian Flow, respectively the stochastic flow. We present here the main definitions, notations, and results that will be used. {We refer to \cite{DPL89,A04} for some results on regular Lagrangian flows and on the advection equation under low regularity assumptions on the  velocity field.
We also refer to  \cite{K84,OB} for the stochastic part and also to \cite{LL19} for classical results on advection--diffusion equation.}

Given a {divergence-free} vector field $b$ and some initial data $(t_0,x_0) \in [0,T]\times \T^d$, we denote by $t\mapsto \bX_{t_0,t}(x_0)$ the regular Lagrangian flow associated to it by the ordinary differential equation
\begin{equation}\label{eq:ODE}\tag{ODE}
    \begin{cases}
        d \bX_{t_0,t}(x_0) = \bb(t,\bX_{t_0,t}(x_0))dt\,,\\
        \bX_{t_0,t_0}(x_0) = x_0\,,
    \end{cases}
\end{equation}
{defined for $\L^d$-a.e. $x_0 \in \T^d$.}
Consider any divergence-free vector field $\bb \in L^1(0,T; W^{1,q}(\T^d))$ together with some initial data $\TEin \in L^r(\T^d)$ such that $1/q + 1/r\le 1$. Then there exists a weak solution to the advection equation \eqref{eq:TE}.
Moreover it can be represented as the transport of the initial mass along the characteristic lines of the flow:
\begin{equation}\label{eq:TErepresentation}
    \solTE(t,x) = \TEin \big( \bX_{t,t_0}(x)\big) = \TEin\big( \bX_{t_0,t}^{-1}(x)\big)\,, \qquad {\forall t \in [0,T] \text{ and $\L^d$-a.e. } x \in \T^d}.
\end{equation}

Given a probability space $(\Omega,\U,\P)$ and $(x_0,\omega) \in \T^2 \times \Omega$, denote by $t \mapsto \bY_{t_0,t}(x_0,\omega)$ the continuous stochastic process adapted to the filtration $(\F_{s,t_0})_s$ defined by $\F_{s,t_0} = \U(\bW_t : t_0 \le t \le s)$  which solves the \textit{forward} stochastic differential equation
    \begin{equation}\label{eq:SDEforward}
    \begin{cases}
        d \bY_{t_0,t}(x_0,\omega) = \bb(t,\bY_{t_0,t}(x_0,\omega))dt + \sqrt{2}d\bW_t(\omega)\,,\\
        \bY_{t_0,t_0}(x_0,\omega) =x_0\,,
    \end{cases}
    \end{equation}
    which means 
    \begin{equation*}
        \bY_{t_0,t}(x_0,\omega) = x_0 + \int_{t_0}^t \bb(s,\bY_{t_0,s}(x_0,\omega)) ds + \sqrt{2}(\bW_t - \bW_{t_0})\,. 
    \end{equation*}
    In \cite{KR05}, given $b \in L^p(0,T;L^q(\T^d))$ with $p,q \ge 2$ satisfying the Ladyzhenskaya-Prodi-Serrin condition $ 2/p+d/q  < 1$,  Krylov and R\"ockner prove existence and uniqueness {of \eqref{eq:SDEforward}, see also \cite{FF13}. }
    
On the other hand, given some final data $(\bar{t},\bar{x}) \in [0,T] \times \T^d$ denote by $t \mapsto \bY_{\bar{t},t}(\bar{x},\omega)$ the continuous stochastic process adapted to the filtration $(\tilde{\F}_{s,\bar{t}})_s$ defined by $\tilde{\F}_{s,t} = \U(\bW_t -\bW_{\bar{t}}:s \le t \le \bar{t})$  which solves the \textit{backward} stochastic differential equation
    \begin{equation}\label{eq:SDEbackward}
    \begin{cases}
        d \bY_{\bar{t},t}(\bar{x},\omega) = -\bb(t,\bY_{\bar{t},t}(\bar{x},\omega))dt - \sqrt{2}d\bW_t(\omega)\,,\\
        \bY_{\bar{t},\bar{t}}(\bar{x},\omega) = \bar{x}\,,
    \end{cases}
    \end{equation}
    which means 
    \begin{equation*}
        \bY_{\bar{t},t}(\bar{x},\omega) = \bar{x} - \int_t^{\bar{t}} \bb(s,\bY_{\bar{t},s}(\bar{x},\omega)) ds - \sqrt{2}(\bW_{\bar{t}} - \bW_t) \,.
    \end{equation*}
    It is important to distinguish between the forward and the backward stochastic flow as for $t_1,t_2 \in [0,T]$, $\bY_{t_1,t_2}\neq \bY_{t_2,t_1}^{-1}$. Notice that if $0 \le t_1 < t_2 \le T,$ $\bY_{t_1,t_2}$ denotes the forward flow, but if $0 \le t_2 < t_1 \le T,$ $\bY_{t_1,t_2}$ denotes the backward flow.

    The solution to the \eqref{eq:ADE}, given a {divergence-free} velocity field $\bb \in L^2(0,T;L^{{q}}(\T^d))$ and initial datum $\ADin \in {W^{2,\infty}}(\T^d)$ with ${2/p+d/q < 1}$, can be represented by the backward Feynman-Kac Formula
\begin{equation}\label{eq:FKbackward}
    \solAD(\bar{t},x) = \E \big[ \ADin\big( \bY_{\bar{t},t_0}(x,\omega) \big) \big]\,, \qquad { \forall \bar{t} \in [t_0,T] \text{ and $\L^d$-a.e. } x \in \T^d\,.}
\end{equation}
We can also work with the forward stochastic flow : for any $f \in L^\infty(\T^d),$
\begin{equation}\label{eq:FKforward}
    \int_{\T^d} f(x) \solAD(t,x) dx = \int_\Omega \int_{\T^d} f\big(\bY_{t_0,t}(x,\omega)\big) \ADin(x) dx d\P(\omega)\,, \qquad { \forall t \in [t_0,T] \text{ and $\L^d$-a.e. } x \in \T^d\,.}
\end{equation}
Applying this to $\solAD \equiv 1 \equiv \ADin$ and $f(x) = \one_A$ for some Borel set $A\subseteq \T^d$, we find out that the stochastic flow is measure preserving in an averaged sense :
\begin{equation}
    \int_\Omega \int_{\T^d} \one_A\big(\bY_{t_0,t}(x,\omega)\big) dx d\P(\omega) = \L^d(A) \qquad \forall t \ge 0\,.
\end{equation}

To compare the regular Lagrangian flow and the stochastic flow, we will use two tools
\begin{itemize}
    \item The classical Gr\"onwall lemma: for two flows associated to the same velocity field
    \begin{equation}\label{eq:Gronwall}
        |\bX_{t_0,t}(x) - \bY_{t_0,t}(y,\omega)| \le \left(|x-y| + \sqrt{2}\sup_{s\in [t_0,t]}|\bW_s(\omega)| \right) \exp\left(\int_{t_0}^t \|\nabla b(s,\cdot)\|_{L^\infty(\T^d)}ds\right)
    \end{equation}
    for every $t \ge t_0$\,.
    \item Doob's inequality to bound the {d-dimensional} Brownian motion for any $a< b$ 
   \begin{equation}\label{eq:translatedDoob}
        \dP\Big( \sup_{t\in[a,b]} |\bW_t-\bW_a|\ge C \Big) \le {d} \exp \left[-\frac{C^2}{2{d}(b-a)}\right].
   \end{equation}
\end{itemize}

{Finally, we will use the following notation to denote the restriction of a set $S \subset \T^d$:
	\begin{equation}\label{eq:restriction}
		S[\varepsilon] := \{ x \in \T^d \mid \text{d}(x, S^c) > \varepsilon\}.
	\end{equation}
}

\section{The velocity field in $L^\infty_t L^p_x$}

{
\subsection{Main ideas of the construction} \label{subsec:idea-loop}

This Subsection provides a heuristic description of the construction of the velocity field used in Theorem \ref{thm:NU_ADE_loops}, the crucial properties that this velocity field must satisfy and the technically challenging aspects of the  proof of Theorem \ref{thm:NU_ADE_loops}.

The main idea of the construction is to provide a velocity field $b$ with only one singularity at the origin $x=y=0$, where the bifurcation (i.e. the non-uniqueness) of Lagrangian trajectories occurs. Additionally, the velocity field is designed so that trajectories exist and are unique everywhere except at the origin. Clearly,  many examples of such divergence-free velocity fields exist. For instance, we recall an  example of a velocity field provided by Aizenmann \cite{A78} defined  in polar coordinates  with only radial component 
\begin{align} \label{eq:failvelocity}
b (r, \theta)= b_r (r, \theta) \textbf{e}_r \,,
\end{align}
where  for some $\varepsilon >0$ we have
\begin{align*}
b_r (r, \theta) =
\begin{cases}
1/r &  \text{ if } \theta \in [-\varepsilon + \pi/4, \pi/4 + \varepsilon) \cup [-\varepsilon + \pi+ \pi/4, \pi + \pi/4 + \varepsilon) \,,
\\
-1/r & \text{ if } \theta \in [ - \varepsilon + 3/4 \pi, 3/4 \pi + \varepsilon) \cup [-\varepsilon - \pi/4, - \pi/4 + \varepsilon) \,,
\\
0 & \text{ otherwise}\,,
\end{cases}
\end{align*}
see Figure \ref{fig:velocity-fail}.
\begin{figure}[ht]
    \begin{tikzpicture}[scale=1.5]
        \draw (0,0) rectangle (2,2);
        \draw (0,1)--(2,1);
        \draw (1,0)--(1,2);
        \draw (0,1.8)--(2,0.2);
        \draw (0.2,2)--(1.8,0);
        \draw (0.2,0)--(1.8,2);
        \draw (0,0.2)--(2,1.8);
        \draw node at (0,-0.22) {};
        \draw[->] (0.2,1.8)--(0.6,1.4);
        \draw[->] (0.6,0.6)--(0.2,0.2);
        \draw[->] (1.4,1.4)--(1.8,1.8);
        \draw[->] (1.8,0.2)--(1.4,0.6);
    \end{tikzpicture}
    \caption{The velocity field in \eqref{eq:failvelocity}.} \label{fig:velocity-fail}
\end{figure}
 This velocity field is divergence-free in the sense of distributions, thanks to its symmetry. However, the crucial property required for comparing the regular Lagrangian flow and the stochastic Lagrangian flow is not satisfied by this velocity field. This property is the high-P\'eclet number of the system due to the velocity field $b$ described in Subsection \ref{subsec:heuristic-idea}.  We describe this essential property in a more quantitative way.  There exists a set $R_0 \subset \T^2$ with positive measure such that  for any $(x_0, y_0) \in R_0$ 
$$ \Delta t (x_0, y_0) =\inf \{ t  \in (0,1):   X_{t} (x_0, y_0) =(0,0) \} \,,$$
is such that $\sqrt{ \Delta t (x_0, y_0) } \ll |(x_0, y_0)|  $, where by this we mean that 
\begin{align} \label{eq:heuristic-main}
\lim_{(x_0, y_0) \to (0,0)} \frac{\sqrt{ \Delta t (x_0, y_0) }}{ {|(x_0, y_0)|}} =0 \,. 
\end{align}
This heuristic property implies that an error like $\sqrt{t}$ (which is the square root of the variance of the Brownian motion) does not affect the behaviour of the Lagrangian trajectories. This is the main property that allows us to prove that 
 advection equation solution for a suitable regularized version of this velocity field  are qualitatively the same as the advection--diffusion equation solutions. Unfortunately, in the case of the velocity field $b$ defined in \eqref{eq:failvelocity} we have that  $\frac{\sqrt{\Delta t(x_0, y_0)} }{ {|(x_0, y_0)|}} \geq \frac{|x_0 - y_0|}{\sqrt{2}}$. Indeed, the velocity field $b$ is in the weak-$L^2$ space, which is a critical space for uniqueness of bounded solutions to the advection--diffusion equation as discussed in the introduction and it is not clear to  us if for such velocity field uniqueness of bounded solutions holds true. An approach to define a divergence-free velocity field 
	$b \in L^\infty_t L^p_x$, with an arbitrary but fixed 
$p \in [1,2)$, that satisfies the property described above is to overlap several divergence-free, loop-like vector fields 
$w_q$
  at different scales and magnitudes, see \eqref{eq:intro-peclet}.
These vector fields form chains, with each loop centered along the diagonals of the torus. Each loop-like vector field
$w_q$	
  interacts\footnote{More precisely, its support intersects only the supports of $w_{q+1}$ and $w_{q-1}$.}  only with 
$w_{q-1}$
and 
$w_{q+1}$, as illustrated in Figure \ref{fig:loopstructure}. The corresponding Lagrangian trajectories are then driven toward the origin at an arbitrary speed, 
which can be freely chosen and depends only on the magnitude of the velocity fields 
$w_q$.

 We now define a sequence of \emph{regular} velocity fields $\{ b_n \}_n$ that converge to the velocity field $b$ used in Theorem \ref{thm:NU_ADE_loops}. Each $b_n$ is constructed as the sum of $4n$ loop-like vector fields, $n$ in each quadrant (see Figure \ref{fig:loopstructure}).  
By appropriately choosing a gluing velocity field (see Figures \ref{fig:odd-even} and \ref{fig:nu_trajectories_bis} for details), the trajectories of $b_n$ originating in the second quadrant travel to the first quadrant when $n$ is even and to the third quadrant when $n$ is odd. In particular, for $n$ even, all Lagrangian trajectories of $b_n$ starting from a suitable subset $R_0$ will reach the first quadrant at some time $T > 0$, whereas for $n$ odd, they will all end in the third quadrant. This behaviour implies the non-uniqueness of solutions to the advection equation (see Figure \ref{fig:nu_trajectories}).  
We can then extend this result to prove the non-uniqueness of solutions to the advection-diffusion equation, relying on the crucial property \eqref{eq:heuristic-main} of the constructed velocity field. However, some technical challenges arise in the proof.


\begin{itemize}
	\item (Intersection problem).
	At each intersection of the supports of $w_{q-1}$ and $w_q$, the forward Lagrangian trajectories will concentrate on one side of the pipe, as shown in Figure \ref{fig:intersection}, because the magnitude of the velocity $w_q$ is much greater than that of $w_{q-1}$ in order to impose property \eqref{eq:heuristic-main}.  
Due to small errors introduced by the Brownian motion (resulting from the $\Delta$ term in the equation), most stochastic Lagrangian trajectories may exit the support of the velocity field $w_q$, significantly complicating the analysis. To address this technical issue, we introduce a time intermittency in the velocity fields $w_q$ relative to $w_{q-1}$. More precisely, we ensure that $w_q$ is periodically turned on and off in such a way that most trajectories remain centered within the support of $w_q$ rather than accumulating near the boundary, as illustrated in Figure \ref{fig:intersection}.

	\item (Forward-backward stability). 
	We can obtain some stability results between the Stochastic Lagrangian flow and the regular Lagrangian flow (see Lemma \ref{lem:loopstabfor} and Lemma \ref{lem:loopstabback}). However, these results hold true if these flows start from a point $x$ in the small scale and travel towards large scales (see Remark \ref{rmk:large_small} for more details). More precisely, to prove such lemmas we need that the flows start from the support of the smaller loop-like velocity field and travel toward the support of the larger loop-like velocity field, see Figure \ref{fig:loopstructure}.  
Technically, this means that we need to use both the forward Lagrangian trajectories and the backward Lagrangian trajectories with the \emph{regularized} velocity field $b_n$ with $n \in \N$. Indeed, the evolution of the solution goes from large scales (in the second quadrant) to small scales (close to the origin), and then it returns to large scales again (in the first or third quadrant, respectively for $n$ even and $n$ odd), see Figure \ref{fig:Ssets}.
In Lemma \ref{lem:claims}, we use the forward stability from Lemma \ref{lem:loopstabfor} and the backward stability from Lemma \ref{lem:loopstabback} to conclude distinct properties of the solutions to the advection-diffusion equation along the odd and even numbers of the sequence $n \in \mathbb{N}$. This lemma will lead to the non-uniqueness of solutions to the advection-diffusion equation.

\end{itemize}

\begin{figure}[!htpb]
	\begin{tikzpicture}[scale=1]
		\draw (0,0)--(2,0);
		\draw (0,1)--(2,1);
		\draw (1,-1)--(1,1.5);.
		\draw (1.4,-1)--(1.4,1.5);
		\draw[blue] (0,0.25)--(1,0.25)--(1.05,0)--(1.05,-1);
		\draw[blue] (0,0.5)--(1,0.5)--(1.1,0)--(1.1,-1);
		\draw[blue] (0,0.75)--(1,0.75)--(1.15,0)--(1.15,-1);
	\end{tikzpicture}
	\caption{{ The Lagrangian trajectories behaviour going from the support of $w_{q-1}$ to the support of $w_q$ if they were time independent. We will avoid the possibility of the trajectories being pushed close to the boundary by introducing suitable time dependency of $\{w_q\}_q$ that we call the time intermittency. }}
	\label{fig:intersection}
\end{figure}

\begin{figure}[!htpb]
	\newcommand{\loopsAbcd}{
		\draw (-4,0) -- (-1,0) arc(-90:0:1) -- (0,1.5);
		\draw (-4,-1) -- (-1,-1) arc(-90:0:2) -- (1,1.5);
		\draw[gray!40] (-2,1.5) -- (-2,-1.5) arc(180:270:0.5) -- (1.5,-2) arc(-90:0:0.5) -- (2,1.5);
		\draw[gray!40] (-2.5,1.5) -- (-2.5,-1.5) arc(180:270:1) -- (1.5,-2.5) arc(-90:0:1) -- (2.5,1.5);
		\draw[gray!40] (1,-1.2) -- (1, -2.8) arc(180:270:0.2) -- (2.8,-3) arc(-90:0:0.2) -- (3,-1.2) arc(0:90:0.2) -- (1.2,-1) arc(90:180:0.2);
		\draw[gray!40] (0.8,-1.2) -- (0.8, -2.8) arc(180:270:0.4) -- (2.8,-3.2) arc(-90:0:0.4) -- (3.2,-1.2) arc(0:90:0.4) -- (1.2,-0.8) arc(90:180:0.4);
		\draw[gray!40] (2.6,-2.7)--(2.6,-3.3)arc(180:270:0.1) -- (3.3,-3.4) arc(-90:0:0.1) -- (3.4,-2.7) arc(0:90:0.1) -- (2.7,-2.6) arc(90:180:0.1);
		\draw[gray!40] (2.5,-2.7) -- (2.5, -3.3) arc(180:270:0.2) -- (3.3,-3.5) arc(-90:0:0.2) -- (3.5,-2.7) arc(0:90:0.2) -- (2.7,-2.5) arc(90:180:0.2);
	}
	\newcommand{\labelsAbcd}{
		\draw[->] (-1.5,-0.5) -- (-1.2,-0.5);
		\draw (-1.3,0.4) node {$ Q_0 $};
		\draw[gray!40] (-2.5,-2.5) node {$ Q_1 $};
		\draw[gray!40] (0.3,-3) node {$ Q_2 $};
		\draw[gray!40] (3,-2) node {$ Q_3 $};
	}
	\newcommand{\loopsABcd}{
		\draw (-4,0) -- (-1,0) arc(-90:0:1) -- (0,1.5);
		\draw (-4,-1) -- (-1,-1) arc(-90:0:2) -- (1,1.5);
		\draw[cyan] (-2,1.5) -- (-2,-1.5) arc(180:270:0.5) -- (1.5,-2) arc(-90:0:0.5) -- (2,1.5);
		\draw[cyan] (-2.5,1.5) -- (-2.5,-1.5) arc(180:270:1) -- (1.5,-2.5) arc(-90:0:1) -- (2.5,1.5);
		\draw[gray!40] (1,-1.2) -- (1, -2.8) arc(180:270:0.2) -- (2.8,-3) arc(-90:0:0.2) -- (3,-1.2) arc(0:90:0.2) -- (1.2,-1) arc(90:180:0.2);
		\draw[gray!40] (0.8,-1.2) -- (0.8, -2.8) arc(180:270:0.4) -- (2.8,-3.2) arc(-90:0:0.4) -- (3.2,-1.2) arc(0:90:0.4) -- (1.2,-0.8) arc(90:180:0.4);
		\draw[gray!40] (2.6,-2.7)--(2.6,-3.3)arc(180:270:0.1) -- (3.3,-3.4) arc(-90:0:0.1) -- (3.4,-2.7) arc(0:90:0.1) -- (2.7,-2.6) arc(90:180:0.1);
		\draw[gray!40] (2.5,-2.7) -- (2.5, -3.3) arc(180:270:0.2) -- (3.3,-3.5) arc(-90:0:0.2) -- (3.5,-2.7) arc(0:90:0.2) -- (2.7,-2.5) arc(90:180:0.2);
	}
	\newcommand{\labelsABcd}{
		\draw[->] (-1.5,-0.5) -- (-1.2,-0.5);
		\draw[cyan,->] (-1.5,-2.25) -- (-0.8,-2.25);
		\draw (-1.3,0.4) node {$ Q_0 $};
		\draw[cyan] (-2.5,-2.5) node {$ Q_1 $};
		\draw[gray!40] (0.2,-3) node {$ Q_2 $};
		\draw[gray!40] (3,-2) node {$ Q_3 $};
	}
	\newcommand{\loopsABCd}{
		\draw (-4,0) -- (-1,0) arc(-90:0:1) -- (0,1.5);
		\draw (-4,-1) -- (-1,-1) arc(-90:0:2) -- (1,1.5);
		\draw[cyan] (-2,1.5) -- (-2,-1.5) arc(180:270:0.5) -- (1.5,-2) arc(-90:0:0.5) -- (2,1.5);
		\draw[cyan] (-2.5,1.5) -- (-2.5,-1.5) arc(180:270:1) -- (1.5,-2.5) arc(-90:0:1) -- (2.5,1.5);
		\draw[red] (1,-1.2) -- (1, -2.8) arc(180:270:0.2) -- (2.8,-3) arc(-90:0:0.2) -- (3,-1.2) arc(0:90:0.2) -- (1.2,-1) arc(90:180:0.2);
		\draw[red] (0.8,-1.2) -- (0.8, -2.8) arc(180:270:0.4) -- (2.8,-3.2) arc(-90:0:0.4) -- (3.2,-1.2) arc(0:90:0.4) -- (1.2,-0.8) arc(90:180:0.4);
		\draw[gray!40] (2.6,-2.7)--(2.6,-3.3)arc(180:270:0.1) -- (3.3,-3.4) arc(-90:0:0.1) -- (3.4,-2.7) arc(0:90:0.1) -- (2.7,-2.6) arc(90:180:0.1);
		\draw[gray!40] (2.5,-2.7) -- (2.5, -3.3) arc(180:270:0.2) -- (3.3,-3.5) arc(-90:0:0.2) -- (3.5,-2.7) arc(0:90:0.2) -- (2.7,-2.5) arc(90:180:0.2);
	}
	\newcommand{\labelsABCd}{
		\draw[->] (-1.5,-0.5) -- (-1.2,-0.5);
		\draw[cyan,->] (-1.5,-2.25) -- (-0.8,-2.25);
		\draw[red,->] (1.5,-3.1) -- (2.5,-3.1);
		\draw (-1.3,0.4) node {$ Q_0 $};
		\draw[cyan] (-2.5,-2.5) node {$ Q_1 $};
		\draw[red] (0.2,-3) node {$ Q_2 $};
		\draw[gray!40] (3,-2) node {$ Q_3 $};
	}
	\newcommand{\loopsABCD}{
		\draw (-4,0) -- (-1,0) arc(-90:0:1) -- (0,1.5);
		\draw (-4,-1) -- (-1,-1) arc(-90:0:2) -- (1,1.5);
		\draw[cyan] (-2,1.5) -- (-2,-1.5) arc(180:270:0.5) -- (1.5,-2) arc(-90:0:0.5) -- (2,1.5);
		\draw[cyan] (-2.5,1.5) -- (-2.5,-1.5) arc(180:270:1) -- (1.5,-2.5) arc(-90:0:1) -- (2.5,1.5);
		\draw[red] (1,-1.2) -- (1, -2.8) arc(180:270:0.2) -- (2.8,-3) arc(-90:0:0.2) -- (3,-1.2) arc(0:90:0.2) -- (1.2,-1) arc(90:180:0.2);
		\draw[red] (0.8,-1.2) -- (0.8, -2.8) arc(180:270:0.4) -- (2.8,-3.2) arc(-90:0:0.4) -- (3.2,-1.2) arc(0:90:0.4) -- (1.2,-0.8) arc(90:180:0.4);
		\draw[blue] (2.6,-2.7)--(2.6,-3.3)arc(180:270:0.1) -- (3.3,-3.4) arc(-90:0:0.1) -- (3.4,-2.7) arc(0:90:0.1) -- (2.7,-2.6) arc(90:180:0.1);
		\draw[blue] (2.5,-2.7) -- (2.5, -3.3) arc(180:270:0.2) -- (3.3,-3.5) arc(-90:0:0.2) -- (3.5,-2.7) arc(0:90:0.2) -- (2.7,-2.5) arc(90:180:0.2);
	}
	\newcommand{\labelsABCD}{
		\draw[->] (-1.5,-0.5) -- (-1.2,-0.5);
		\draw[cyan,->] (-1.5,-2.25) -- (-0.8,-2.25);
		\draw[red,->] (1.5,-3.1) -- (2.5,-3.1);
		\draw (-1.3,0.4) node {$ Q_0 $};
		\draw[cyan] (-2.5,-2.5) node {$ Q_1 $};
		\draw[red] (0.4,-3) node {$ Q_2 $};
		\draw[blue] (3,-2) node {$ Q_3 $};
	}
	\begin{tikzpicture}[scale=0.5]
		\loopsAbcd
		\labelsAbcd
		\filldraw[fill=blue!30!white, fill opacity=0.5, draw=blue] (-3.3,-1) -- (-2.3,-1) -- (-2.3,0) -- (-3.3,0) --(-3.3,-1);
	\end{tikzpicture}
	\hfill
	\begin{tikzpicture}[scale=0.5]
		\loopsABcd
		\labelsABcd
		\filldraw[fill=blue!30!white, fill opacity=0.5, draw=blue] (-3,-1) -- (-2.5,-1) -- (-2.5,0) -- (-3,0) --(-3,-1);
		\filldraw[fill=blue!30!white, fill opacity=0.5, draw=blue] (-2.5,-1.3) -- (-2,-1.3) -- (-2,-0.3) -- (-2.5,-0.3) --(-2.5,-1.3);
	\end{tikzpicture}
	\hfill
	\begin{tikzpicture}[scale=0.5]
		\loopsABcd
		\labelsABcd
		\filldraw[fill=blue!30!white, fill opacity=0.5, draw=blue] (-3,-1) -- (-2.5,-1) -- (-2.5,0) -- (-3,0) --(-3,-1);
		\filldraw[fill=blue!30!white, fill opacity=0.5, draw=blue] (0.8,-2) -- (0.45,-2.5) -- (1,-2.5) -- (1,-2) --(0.8,-2);
	\end{tikzpicture}
	\hfill
	\begin{tikzpicture}[scale=0.5]
		\loopsAbcd
		\labelsAbcd
		\filldraw[fill=blue!30!white, fill opacity=0.5, draw=blue] (-2.75,-1) -- (-2.25,-1) -- (-2.25,0) -- (-2.75,0) --(-2.75,-1);
	\end{tikzpicture}
	\begin{tikzpicture}[scale = 1.2]
		\tikzset{line width=0.4pt}
		\draw (-1,0) node{$ $};
		\foreach \x in {0,2.5,5,7.5}{
			\draw (\x cm,2pt) -- (\x cm,-2pt);
			\draw[cyan] (\x, 0) -- (\x + 0.2,0);
			\draw (\x + 0.2, 0) -- (\x + 2.5, 0);
		}
		\draw[dotted] (10,0)--(11,0);
		\draw (0,0) node[below=3pt] {$ \bar{t}_{0,(0)} = 0 $} node[above=3pt] {$   $};
		\draw (2.5,0) node[below=3pt] {$ \bar{t}_{0,(1)} $} node[above=3pt] {$  $};
		\draw (5,0) node[below=3pt] {$ \bar{t}_{0,(2)} $} node[above=3pt] {$  $};
		\draw (7.5,0) node[below=3pt] {$ \bar{t}_{0,(4)} $} node[above=3pt] {$  $};
	\end{tikzpicture}
	\begin{tikzpicture}[scale=0.5]
		\loopsABcd
		\labelsABcd
		\filldraw[fill=blue!30!white, fill opacity=0.5, draw=blue] (-3,-1) -- (-2.5,-1) -- (-2.5,0) -- (-3,0) --(-3,-1);
		\filldraw[fill=blue!30!white, fill opacity=0.5, draw=blue] (0.2,-2) -- (-0.15,-2.5) -- (0.9,-2.5) -- (0.9,-2) --(0.2,-2);
	\end{tikzpicture}
	\hfill
	\begin{tikzpicture}[scale=0.5]
		\loopsABCd
		\labelsABCd
		\filldraw[fill=blue!30!white, fill opacity=0.5, draw=blue] (-3,-1) -- (-2.5,-1) -- (-2.5,0) -- (-3,0) --(-3,-1);
		\filldraw[fill=blue!30!white, fill opacity=0.5, draw=blue] (0.35,-2) -- (0,-2.5) -- (0.8,-2.5) -- (0.8,-2) --(0.35,-2);
		\filldraw[fill=blue!30!white, fill opacity=0.5, draw=blue] (0.8,-2.8)--(1,-2.8)--(1,-2.3)--(0.8,-2.3)--(0.8,-2.8);
	\end{tikzpicture}
	\hfill
	\begin{tikzpicture}[scale=0.5]
		\loopsABCd
		\labelsABCd
		\filldraw[fill=blue!30!white, fill opacity=0.5, draw=blue] (-3,-1) -- (-2.5,-1) -- (-2.5,0) -- (-3,0) --(-3,-1);
		\filldraw[fill=blue!30!white, fill opacity=0.5, draw=blue] (0.35,-2) -- (0,-2.5) -- (0.8,-2.5) -- (0.8,-2) --(0.35,-2);
		\filldraw[fill=blue!30!white, fill opacity=0.5, draw=blue] (2.3,-3.2)--(2.6,-3.2)--(2.6,-3)--(2.5,-3)--(2.3,-3.2);
	\end{tikzpicture}
	\hfill
	\begin{tikzpicture}[scale=0.5]
		\loopsABcd
		\labelsABcd
		\filldraw[fill=blue!30!white, fill opacity=0.5, draw=blue] (-3,-1) -- (-2.5,-1) -- (-2.5,0) -- (-3,0) --(-3,-1);
		\filldraw[fill=blue!30!white, fill opacity=0.5, draw=blue] (0.45,-2) -- (0.1,-2.5) -- (0.9,-2.5) -- (0.9,-2) --(0.45,-2);
	\end{tikzpicture}
	\begin{tikzpicture}[scale = 1.2]
		\tikzset{line width=1pt}
		\draw (-1,0) --(0,0);
		\draw[semithick, cyan] (0,0)--(3,0);
		\draw (4.1,0)--(7,0);
		\draw[dotted] (7,0) -- (8,0);
		\draw (8,0) -- (11,0);
		\foreach \x in {0,10}{
			\draw (\x cm,3pt) -- (\x cm,-3pt);}
		\foreach \x in {3,3.4,3.8,4.2,4.6,5}{
			\draw (\x,1pt) -- (\x,-1pt);
			\draw[red] (\x, 0)--(\x + 0.1,0);
			\draw[cyan] (\x + 0.1,0)--(\x+0.4,0);
		}
		\draw (0,0) node[below=3pt] {$ \bar{t}_{0,(1)} $} node[above=3pt] {$   $};
		\draw (10,0) node[below=3pt] {$ \bar{t}_{0,(2)} $} node[above=3pt] {$  $};
		\draw (4,0) node[below=3pt] {$\bar{t}_{1,(1,0)} \, ... \, \bar{t}_{1,(1,\frac{a_0}{a_2})}$} node[above=3pt] {$  $};
	\end{tikzpicture}
	\begin{tikzpicture}[scale=0.5]
		\loopsABCd
		\labelsABCd
		\filldraw[fill=blue!30!white, fill opacity=0.5, draw=blue] (-3,-1) -- (-2.5,-1) -- (-2.5,0) -- (-3,0) --(-3,-1);
		\filldraw[fill=blue!30!white, fill opacity=0.5, draw=blue] (0.35,-2) -- (0,-2.5) -- (0.8,-2.5) -- (0.8,-2) --(0.35,-2);
		\filldraw[fill=blue!30!white, fill opacity=0.5, draw=blue] (2,-3.2)--(2.55,-3.2)--(2.55,-3)--(2.2,-3)--(2,-3.2);
	\end{tikzpicture}
	\hfill
	\begin{tikzpicture}[scale=0.5]
		\loopsABCD
		\labelsABCD
		\filldraw[fill=blue!30!white, fill opacity=0.5, draw=blue] (-3,-1) -- (-2.5,-1) -- (-2.5,0) -- (-3,0) --(-3,-1);
		\filldraw[fill=blue!30!white, fill opacity=0.5, draw=blue] (0.35,-2) -- (0,-2.5) -- (0.8,-2.5) -- (0.8,-2) --(0.35,-2);		
		\filldraw[fill=blue!30!white, fill opacity=0.5, draw=blue] (2.05,-3.2)--(2.5,-3.2)--(2.5,-3)--(2.25,-3)--(2.05,-3.2);
		\filldraw[fill=blue!30!white, fill opacity=0.5, draw=blue] (2.5, -3.3)--(2.6, -3.3)--(2.6,-3.1)--(2.5,-3.1)--(2.5,-3.3);
	\end{tikzpicture}
	\hfill
	\begin{tikzpicture}[scale=0.5]
		\loopsABCD
		\labelsABCD
		\filldraw[fill=blue!30!white, fill opacity=0.5, draw=blue] (-3,-1) -- (-2.5,-1) -- (-2.5,0) -- (-3,0) --(-3,-1);
		\filldraw[fill=blue!30!white, fill opacity=0.5, draw=blue] (0.35,-2) -- (0,-2.5) -- (0.8,-2.5) -- (0.8,-2) --(0.35,-2);
		\filldraw[fill=blue!30!white, fill opacity=0.5, draw=blue] (2.05,-3.2)--(2.5,-3.2)--(2.5,-3)--(2.25,-3)--(2.05,-3.2);
		\filldraw[fill=blue!30!white, fill opacity=0.5, draw=blue] (3.05,-3.5)--(3.2,-3.5)--(3.2,-3.4)--(3.15,-3.4)--(3.05,-3.5);
	\end{tikzpicture}
	\hfill
	\begin{tikzpicture}[scale=0.5]
		\loopsABCd
		\labelsABCd
		\filldraw[fill=blue!30!white, fill opacity=0.5, draw=blue] (-3,-1) -- (-2.5,-1) -- (-2.5,0) -- (-3,0) --(-3,-1);
		\filldraw[fill=blue!30!white, fill opacity=0.5, draw=blue] (0.35,-2) -- (0,-2.5) -- (0.8,-2.5) -- (0.8,-2) --(0.35,-2);
	\end{tikzpicture}
	\begin{tikzpicture}[scale = 1.2]
		\tikzset{line width=1pt}
		\draw[cyan] (-1,0) --(0,0);
		\draw[red] (0,0)--(3,0);
		\draw[cyan] (4.1,0)--(7,0);
		\draw[cyan, dotted] (7,0) -- (8,0);
		\draw[cyan] (8,0) -- (11,0);
		\foreach \x in {0,10}{
			\draw (\x cm,3pt) -- (\x cm,-3pt);}
		\foreach \x in {3,3.4,3.8,4.2,4.6,5}{
			\draw (\x,1pt) -- (\x,-1pt);
			\draw[blue] (\x, 0)--(\x + 0.1,0);
			\draw[red] (\x + 0.1,0)--(\x+0.4,0);
		}
		\draw (0,0) node[below=3pt] {$ \bar{t}_{1,(1,1)} $} node[above=3pt] {$   $};
		\draw (10,0) node[below=3pt] {$ \bar{t}_{1,(2,1)} $} node[above=3pt] {$  $};
		\draw (4,0) node[below=3pt] {$\bar{t}_{2,(1,1,0)} \, ... \, \bar{t}_{2,(1,1,\frac{a_1}{a_3})}$} node[above=3pt] {$  $};
	\end{tikzpicture}
	\caption{{ Time--intermittency of the pipes: the pipes shown in gray are ``turned off'', while the coloured pipes (black, light blue, red, and blue) are ``turned on''. Additionally, we represent the time intervals during which the coloured velocity fields are active on the timeline below the figures, with the corresponding time supports indicated by the coloured subintervals. In particular, smaller pipes are turned on and off multiple times within a subinterval, while a larger pipe remains continuously turned on during this period.
	}}
	\label{fig:stop-motion}
\end{figure}}

\subsection{Building block}

We start with the construction of a general pipe and fix precisely all parameters in the next section
\begin{itemize}
    \item $a $ denotes the width of the pipe where the flow circulates.
    \item $l$ denotes the inner horizontal length of the pipe.
    \item $\bar{l}$ denotes the inner vertical length of the pipe.
    \item $v$ denotes the speed of the flow.
    \item $O= (O_{x},O_{y}) \in \T^2$ give the coordinates of the center of the loop.
\end{itemize}
These parameters together with the building block are indicated in figure \ref{fig:shearloops}.

\begin{figure}[!htbp]
    \centering
    \begin{tikzpicture}
        \filldraw[fill=yellow, fill opacity=0.2, draw=black](-2.6,-1)--(-2.6,1) arc (180:90:0.6) --(1,1.6) arc (90:0:0.6)--(1.6,-1)arc(0:-90:0.6)--(-2,-1.6) arc (-90:-180:0.6);
        \filldraw[fill=blue!50!white, fill opacity=0.7, draw=black](-2.3,-1)--(-2.3,1) arc (180:90:0.3) --(1,1.3) arc (90:0:0.3)--(1.3,-1)arc(0:-90:0.3)--(-2,-1.3) arc (-90:-180:0.3);
        \filldraw[fill=blue!20!white, fill opacity=1, draw=black](-2,-1)--(-2,1)--(1,1)--(1,-1)--(-2,-1);
        \filldraw[black] (-0.5,0) circle (1pt) node[anchor=south]{\small $O$};
        \coordinate (o) at (0,0);
        \coordinate (a) at (0,1);
        \coordinate (b) at (-1,0);
        \coordinate (f) at (1,0);
        \draw[<->] (-1.7,-1.3) -- (-1.7,-1) node[midway, right]{\small $a$};
        \draw[<->] (-1.7,-1.6) -- (-1.7,-1.3) node[midway, right]{\small $a$};
        \draw[<->] (0.3,-1.3)--(0.3,1.3) node[midway, right]{\small $\bar{l}$};
        \draw[<->] (-2.3,-0.7)--(1.3,-0.7) node[midway, above]{\small $l$};
        \draw node at (-2.45,0.5) {\small $v$};
        \draw[dashed,->] (-2.45,0.3)--(-2.45,-0.5);
        \draw[dashed,->] (0.7,-1.45)--(1,-1.45) arc(-90:0:0.45)--(1.45,-0.7);
        \draw[dashed,->] (1.45,0.7)--(1.45,1) arc(0:90:0.45)--(0.7,1.45);
        \filldraw[fill=yellow, fill opacity=0.2, draw=black] (1.9,0.6)--(2.2,0.6)--(2.2,0.3)--(1.9,0.3)--(1.9,0.6);
        \draw node at (2.8,0.45) {Pipe};
        \filldraw[fill=blue!50!white, fill opacity=0.7, draw=black] (1.9,0.15)--(2.2,0.15)--(2.2,-0.15)--(1.9,-0.15)--(1.9,0.15);
        \filldraw[fill=blue!20!white, fill opacity=1, draw=black] (2.2,0.15)--(2.5,0.15)--(2.5,-0.15)--(2.2,-0.15)--(2.2,0.15);
        \draw node at (2.9,0) {$R_a$};
        \filldraw[fill=blue!20!white, fill opacity=1, draw=black] (1.9,-0.6)--(2.2,-0.6)--(2.2,-0.3)--(1.9,-0.3)--(1.9,-0.6);
        \draw node at (2.6,-0.45) {$R$};
    \end{tikzpicture}
    \caption{The building block.}
    \label{fig:shearloops}
\end{figure}

The building block $w_{l, \overline{l},  a, v} : \R^2 \to \R^2 $ is a compactly supported divergence free velocity field defined by the following lemma. We will use the 3 parameters short hand notation $w_{l,  a, v}$ whenever $\overline{l} = l$.

\begin{lem}[Building block] \label{lemma:building-block}
    For any $l, a \in [0,1/8[$, $v \in \R^+$, there exists an autonomous velocity field  $w= w_{l, \overline{l},  a, v} \in BV( \R^2; \R^2)$ which enjoys the following properties:
    \begin{itemize}
    \item $\supp (w) \subset 
    \big([-l/2-a,l/2+a] \times [-\bar{l}/2-a,\bar{l}/2+a]\big) \backslash \big( [-l/2+a,l/2-a] \times [-\bar{l}/2+a,\bar{l}/2-a] \big)\,,$
        \item $\| w \|_{L^\infty} = v \,,$
        \item $\dv (w) =0$, namely $w = \nabla^\perp H $ for some $H: \R^2 \to \R$.
        \item The non-degenerate streamlines of $w$, i.e. the sets $\{ H = h \} \cap \{ \nabla H \neq 0 \}$, are closed smooth curves and the period 
        $T(x)= \inf \{ s \geq 0: X_{s,0}(x) = x \} $
        satisfies 
        $$ T(x ) \leq \frac{2 l + 2\bar{l} + 4 \pi a }{v}\,.$$
        \item Let $Q$ be the interior of $\supp(w)$, then  $w \in C^\infty ({Q})$ and it satisfies
        $$ \| \nabla w \|_{L^\infty (Q)} = \frac{v}{a}\,.$$
        \item There exists a constant $C>0$ independent on $a, v, l$ such that  for any $\gamma : [0, T] \to \R^2$ integral curve of $w$ starting at $x \in Q_\varepsilon = \{ z \in \R^2: \dist (z, Q^c) > \varepsilon  \}$ we have 
        $$\int_0^T \| \nabla w  \|_{L^\infty (B_{\varepsilon} (\gamma (s)))} ds \leq C \,, $$
       where $T$ is the period of the streamline of $x$.
    \end{itemize}
\end{lem}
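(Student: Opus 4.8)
\medskip
\noindent\emph{Proof strategy.}
The plan is to realise $w$ as a unit--speed circulation inside a rounded channel, written as $w=\nabla^\perp H$ for an explicit stream function. First one fixes the geometry: inside the rectangular annulus
\[
\Omega:=\Big([-l/2-a,l/2+a]\times[-\bar l/2-a,\bar l/2+a]\Big)\setminus\Big([-l/2+a,l/2-a]\times[-\bar l/2+a,\bar l/2-a]\Big)
\]
one chooses a closed ``racetrack'' $\overline Q\subset\Omega$ bounded by two concentric rounded rectangles of width $\asymp a$, with the corner roundings done in $C^\infty$, on scale $\asymp a$, and localised near the four corners. One then takes $d$ to be a smooth distance--like function whose sublevel sets foliate $Q$ by these concentric rounded rectangles, with $|\nabla d|=1$, $d$ affine on each of the four straight segments of the channel, $d$ smooth on $Q$, and $\{0<d<a\}=Q$; and one sets $H:=v\,g(d)$ where $g(t)=t$ on $[0,a]$ and $g$ is locally constant outside $[0,a]$, so that $w:=\nabla^\perp H$ equals $v\,\nabla^\perp d$ on $Q$ and $0$ off $\overline Q$. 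Choosing the rounding radii so that the whole channel fits strictly inside the sharp--cornered $\Omega$ while keeping the corner scale $\asymp a$ is an elementary but slightly fiddly point that I would dispatch first.

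Granting this, the first five properties are immediate from the explicit form. The support is $\overline Q\subset\Omega$. Since $g'\equiv1$ on $(0,a)$ and $|\nabla d|=1$, one has $|w|\equiv v$ on $Q$, hence $\|w\|_{L^\infty}=v$; $w$ is $C^\infty$ on $Q$ by the choice of $d$, is bounded, and vanishes off $\overline Q$ with a jump across the rectifiable set $\partial Q$, so $w\in BV(\R^2;\R^2)$. The identity $w=\nabla^\perp H$ is built in, giving $\dv w=0$. The non--degenerate streamlines are the level sets $\{d=c\}$, $0<c<a$, i.e.\ the rounded rectangles foliating $Q$; these are smooth closed curves, along each of them $|w|\equiv v$, and each has length $\le 2l+2\bar l$ (four straight parts) plus $4\pi a$ (four quarter--turns on scale $\le 2a$), so the first return time is $T(x)=\mathrm{length}(\{d=d(x)\})/v\le(2l+2\bar l+4\pi a)/v$. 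Finally, on $Q$ we have $\nabla w=v\,(\nabla^2 d)^\perp$, and $\|\nabla^2 d\|_{L^\infty(Q)}$ is the supremum over $c\in(0,a)$ of the curvature of $\{d=c\}$; arranging the inner wall of the channel to have maximal curvature $1/a$ makes these curvatures range over $(1/(2a),1/a]$, whence $\|\nabla w\|_{L^\infty(Q)}=v/a$.

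The heart of the matter is the last property, and this is the step I expect to carry the real content. The crucial structural fact is that $d$ is affine on the four straight segments of the channel, so $\nabla w\equiv0$ there; thus $\nabla w$ is supported in four corner regions $\mathcal C_1,\dots,\mathcal C_4$, on which $\|\nabla w\|_{L^\infty}\le v/a$. Let $\gamma\colon[0,T]\to\R^2$ be an integral curve of $w$ issued from $x\in Q_\varepsilon$. Since $Q_\varepsilon=\{\varepsilon<d<a-\varepsilon\}$ and $d$ is constant along streamlines, the whole curve $\gamma$ lies in $Q_\varepsilon$, so $B_\varepsilon(\gamma(s))\subset Q$ for every $s$ and $\|\nabla w\|_{L^\infty(B_\varepsilon(\gamma(s)))}\le v/a$, this quantity vanishing unless $B_\varepsilon(\gamma(s))$ meets some $\mathcal C_i$. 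As $\gamma$ travels at constant speed $v$ and the total arclength of its streamline inside the corner regions is $\le 4\pi a$, the set of times $s\in[0,T]$ for which $B_\varepsilon(\gamma(s))$ meets a corner region has Lebesgue measure $\le \tfrac1v(4\pi a+8\varepsilon)$; moreover $Q_\varepsilon\ne\emptyset$ forces $\varepsilon<a/2$, so this measure is $\le Ca/v$ with $C$ absolute. Therefore
\[
\int_0^T\|\nabla w\|_{L^\infty(B_\varepsilon(\gamma(s)))}\,ds\ \le\ \frac{v}{a}\cdot\frac{Ca}{v}\ =\ C .
\]
The key point, and the reason the lemma insists on the exact identity $\|\nabla w\|_{L^\infty(Q)}=v/a$ rather than merely a bound, is that the large factor $v/a$ from the corners is exactly compensated by the short time $\asymp a/v$ the flow spends in an $\varepsilon$--neighbourhood of the corners; the resulting uniformity in all of $a,v,l,\bar l,\varepsilon$ is what makes the brick reusable at arbitrarily small, super--exponentially separated scales in the next section. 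So the main obstacle is not any individual estimate but organising the rounded--channel geometry so that the two exact equalities $\|w\|_{L^\infty}=v$, $\|\nabla w\|_{L^\infty(Q)}=v/a$, the $C^\infty$ regularity on $Q$, and the support and period bounds all hold simultaneously.
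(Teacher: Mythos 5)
Your proposal is correct and follows essentially the same route as the paper: the stream function is (a truncation of) the distance function to a rounded rectangle, so $w=v\,\nabla^\perp H$ circulates at constant speed $v$ along rounded-rectangle level sets, $\nabla w$ is supported in the corner arcs where it is of size $v/a$, and the flow spends time $\lesssim a/v$ in an $\varepsilon$-neighbourhood of the corners, which yields the uniform bound in the last item exactly as in the paper. Your extra smoothing of the corner roundings is a minor cosmetic refinement of the paper's explicit choice $H=\min\{\dist(\cdot,R_a),a\}$, not a different argument.
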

\begin{proof}
We consider the set
$R =  \{ (x,y) : |x| < \overline{l} /2-a  \,, |y|< l/2-a  \}  $
and define the set 
$$R_a = \{ (x,y): \dist ((x,y), R  ) < a \} \,.$$
We define the hamiltonian as  
\begin{equation}\label{eq:H}
    H(x,y) :=  \min \left\{ \dist\big( (x,y), R_a \big), a\right\}
    \end{equation}
   and finally we define the building block
    \begin{equation}
        w (x,y) = v \nabla^\perp H = v
    \begin{pmatrix}
        -\partial_y H\\
        \partial_x H 
    \end{pmatrix} \,.
    \end{equation}
The first three properties are straightforward from the definition. 
The fourth property follows from the fact that there are straight parts where the velocity field is such that $\nabla w =0$, i.e. it is constant and there are curved parts where (up to change coordinates) the velocity field locally can be written as 
$$w (x) = v \frac{x^\perp}{|x|} \qquad \text{with } |x| \geq a \,,$$
and therefore the fourth property follows. 
For the final property it is enough to see that the measure of $I = \{ t \in (0, T):  \| \nabla w \|_{L^\infty (B_{\varepsilon} (\gamma (t)))} \neq 0 \} $ is bounded by 
$$|I | \leq C \frac{a}{v} \,.$$
Therefore, using also the bound of the fourth point, we have 
$$ \int_0^T \| \nabla w \|_{L^\infty (B_\varepsilon (\gamma (s)))} ds = \int_{I} \| \nabla w \|_{L^\infty (B_\varepsilon (\gamma (s)))} ds \leq |I|  \| \nabla w \|_{L^\infty } \leq C  \,.$$
\end{proof}

\subsection{The choice of the parameters}

Suppose from now on $1\le p < 2$ is fixed as in Theorem \ref{thm:NU_ADE_loops}. First set up a super exponential sequence $(a_q)_{q\ge 0}$ with $a_0> 0$ (that can be made arbitrarily small and it is chosen in the proof to reabsorb constants) and 
\begin{align} \label{d:a-q}
a_{q+1} = a_q^{1+\delta}
\end{align}
and we may use the notation $a_{-1} = a_0 + a_1$.
Here $\delta \in (0, 1/10)$ is chosen sufficiently small so that
\begin{equation}\label{hyp:delta}\tag{H$\delta$}
    p < \frac{(2+\delta)(1- \delta)}{1+\delta}\,.
\end{equation}
Notice this hypothesis is possible only because the right hand side is a continuous  function in $\delta \geq 0$ and for $\delta =0$ the condition reduces to  $p <2$.
Actually, for computational purposes we want $a_q$ to be a multiple of $a_{q+1}$ for each $q\ge 0$. Thus we should choose inductively $a_{q+1}$ such that $a_q/a_{q+1}$ is an integer inside of $[a_q^{-\delta},a_q^{-\delta}+1]$. As this step is only technical and would make the text heavy to read, we avoid it. We define the horizontal and vertical length of the q-th loop as 
$$ l_q = \bar{l}_q = 4a_{q-1}\,.$$
For any $q \ge 0$ we define the speed in the $q$-th loop as
\begin{equation} \label{d:v-q}
    v_q = a_q^{-\alpha}\,,
\end{equation}  
where $\alpha \in (1, 11/10)$ satisfies the two following condition{s}:
\begin{equation}\label{hyp:ha}
   \frac{1}{1- \delta} < \alpha  < \frac{2+\delta}{p(1+\delta)} \tag{H$\alpha$}
\end{equation}
which is possible thanks to the condition on $\delta >0$ in \eqref{hyp:delta}.
Moreover, to close the estimate we use the parameter $\varepsilon >0$ satisfying 
    \begin{equation}\tag{H$\varepsilon$}\label{hyp:epsilon}
        \varepsilon < \min \left  \{ \frac{\delta(\alpha -1)}{1+\delta} , \frac{\delta^2}{8} \right \}\,.
    \end{equation}

    Finally,  $a_0 \in (0, a)$ with $a $ small enough so that  
    \begin{equation} \label{eq:sum-a_k}
    {\sum_{k \ge 0} a_k \le } \sum_{k \geq 0 } a^{ \varepsilon (1+ \delta)^k} \leq 2 \,.
    \end{equation}


\subsection{The construction of the velocity field}

We  define the centers of the $q$-th pipe
 $$ O_q = \left( -  \sum_{k\ge q-1} 2 a_k,  \sum_{k\ge q-1} 2 a_k \right) \,,$$ 
 and we define 
  $$Q_q =  \supp \{  w_{l_q, a_q, v_q} (\cdot - O_q) \}\,,$$
  where the $ w_{l_q, a_q, v_q}$ is the building block constructed in Lemma \ref{lemma:building-block}. We also define the intersection of two consecutive pipes as 
  $$S_q = Q_{q} \cap Q_{q+1} \,,$$
  and the minimum travel time for the backward flow of $w_q$ to move a point from $S_{q}$ to $S_{q-1}$ is denoted by 
  $$ t_{q, q-1} =  { \sup } \left \{ t \in [0,1]: \sup_{x \in S_{q}}  \overline{\bX}^q_{1, t} (x) \in S_{q-1} \right \} \,, $$
  where $\overline{\bX}^n_{1,\cdot}$ is the backward trajectory of $w_{l_q, a_q, v_q}$ which enjoys $t_{q, q-1} \leq 6 \frac{a_{q-1}}{v_q}$.
 Notice that we chose $1 $ as { the} initial time of the backward trajectory, but since the velocity field $w_{l_q, a_q, v_q}$ is autonomous this choice is irrelevant. Moreover,  we  define the gap time to reach the q-th pipe for the subsequent chunks in the $q-1$-th pipe as 
$$\overline{\tau}_{q} = \frac{a_{q}}{v_{q-1}} \,, \qquad \forall q \geq 1\,,$$
and we remark that there exists $M \in \N$ large such that $\overline{\tau}_{q} = M \overline{\tau}_{q+1}  $.
The gap time is represented in figure \ref{fig:times}.

 Next, we construct the intermittent{--}in{--}time $w_q : \R \times \R^2 \to \R^2$ as
 \begin{align} \label{d:w_q}
 w_q (t,x) = w_{l_q, a_q, v_q} (x - O_q) \sum_{j =- \infty }^\infty \mathbbm{1}_{[j \overline{\tau}_q + t_{q-1} \, , \,  j \overline{\tau}_q + t_q + \frac{a_{q-1}}{v_q} ]} (t) \,,
 \end{align}
 where $t_q = \sum_{k=1}^{q} t_{k, k-1}$
 and clearly we have 
 \begin{equation} \label{eq:Linfty-w_q}
  \| w_{q} \|_{L^\infty} \leq v_q \,.
\end{equation} 
 {Moreover,} we define the gluing velocity field {which brings mass from the second quadrant to the first quadrant of the torus when $n$ is even, and to the third quadrant when $n$ is odd:}
       $$w_{n, \text{glue}} (t,x) = \sum_{j = -\infty}^\infty w_{l_{n, \text{glue}}, \overline{l}_{n, \text{glue}}, a_{n+1}, v_{n+1}} (x - O_{n, \text{glue}}) \mathbbm{1}_{[ j \overline{\tau}_{n+1} + t_n \, , j \overline{\tau}_{n+1} + t_n + t_{\text{glue}, n}]} (t).$$
The gluing parameters {differ when $n$ is even or odd, and} are given in the following table :
\begin{equation}
 \label{eq:table}
\begin{tabular}{c | c c c c} 
 & $O_{n, \text{ glue}}$ & $l_{n, \text{glue}}$ &  $\overline{l}_{n, \text{glue}}$ & $t_{\text{glue}, n}$ \\ [0.5ex] 
 \hline 
 $n$ even & $\left(0,O_{n+1, y}\right)$ & $O_{n+1, x} + l_{n+1}$ & $l_{n+1}$ & $\{ t: \gamma (t) = (-x_1, x_2) \}$ \\ [0.5ex] 
 $n$ odd  & $\left(O_{n+1, x},0\right)$ & $l_{n+1}$ & $O_{n+1, y} + l_{n+1}$ & $\{ t: \gamma (t) = (x_1, -x_2) \}$
\end{tabular}
\end{equation}
      where $\gamma : \R \to \R^2$ is the integral curve {induced from $w_{n,\text{glue}}$} starting from $\gamma(0) = (x_1, x_2)$ where $(x_1, x_2) $ is the center of the rectangle $Q_n \cap Q_{n, \text{glue}}$. {The support of $w_{n,\text{glue}}$ is shown as a red pipe in Figure \ref{fig:loopstructure}.}
Next, for any $n \in \N$, we define  the intermittent{--}in{--}time velocity fields  $b_{n,1 } : \R \times \R^2 \to \R^2$ as 
 $$b_{n, 1} (t, x) =  \sum_{q = 0}^n w_q (t,x) \,,$$

Finally, we define 
\begin{align} \label{d:symmetry:VF}
  b_n (t, x) & = b_{n, 1} (t,x)  + \begin{pmatrix}
             b_{n,1}^{(1)} (T_n - t, -x_1, x_2) \\
           - b_{n,1}^{(2)} (T_n - t, -x_1, x_2) 
         \end{pmatrix}  + \begin{pmatrix}
            - b_{n,1}^{(1)} (T_n - t, -x_1, - x_2) \\
            - b_{n,1}^{(2)} (T_n -t ,  -x_1, - x_2) 
         \end{pmatrix}  \notag
         \\
         & \quad +  \begin{pmatrix}
             - b_{n,1}^{(1)} (T_n -t  , x_1, - x_2) \\
            b_{n,1}^{(2)} (T_n - t, x_1, - x_2)
         \end{pmatrix} + w_{n, \text{glue}} (t,x)  \,,
\end{align}  
and $T_n$ is defined accordingly to have {the reflection property $b_n(t,x) = - b_n(T_n -t,x)$} needed in Lemma \ref{lem:loopstabfor}, i.e. we define 
\begin{equation} \label{d:T_n}
T_n =    2 \frac{a_0}{a_1} \overline{\tau}_{1} +2   \sum_{k=1}^{n} t_{k, k-1}   +   \sum_{i=1}^{n+1}  \frac{a_{i-2}}{a_{i}}\overline{\tau}_{i} +  t_{\text{glue}, n} \, .
\end{equation}
{
\begin{remark}
The gluing velocity field is a small perturbation of the natural sequence of velocity fields that one may consider, namely \eqref{d:symmetry:VF} without $w_{n, \text{glue}}$, i.e. it holds that $\| w_{n, glue} \|_{L^\infty_t L^1_x } \leq 10 \| w_{n+1} \|_{ L^\infty_t L^1_x} \to 0$ as $n \to \infty$. 
Thanks to  \eqref{eq:table}, we observe that the glued velocity field will strongly affect the Lagrangian trajectories of $b_n$, having completely different behaviours for $n$ odd and $n$ even. We depict the different behaviours in  Figures \ref{fig:odd-even} and \ref{fig:nu_trajectories_bis}.
\end{remark}

\begin{figure}[htbp]
    \centering
    	\begin{tikzpicture}[scale=3]
			\draw[->] (-1,0)--(1,0);
			\draw[->] (0,-1)--(0,1);
			\filldraw[fill=red!30!white, fill opacity=0.5, draw=red] (-0.7,0.7) rectangle (-0.8,0.8);
			\filldraw[fill=vert!30!white, fill opacity=0.5, draw=vert] (0.7,0.7) rectangle (0.8,0.8);
			\filldraw[fill=blue!30!white, fill opacity=0.5, draw=blue] (-0.7,-0.7) rectangle (-0.8,-0.8);
			\draw[dotted,->](-0.7,0.7)--(-0.3,0.3);
			\draw[dotted,vert,->](-0.3,0.3)--(0.3,0.3);
			\draw[dotted,->](0.3,0.3)--(0.7,0.7);
			\draw[dotted, blue,->](-0.3,0.3)--(-0.3,-0.3);
			\draw[dotted,->](-0.3,-0.3)--(-0.7,-0.7);
			\draw node at (0.9,-0.1) {$x$};
			\draw node at (0.1,0.9) {$y$};
		\end{tikzpicture}
    \caption{{In red we represent an approximation of the support of the initial datum to the \eqref{eq:ADE}.  We depict in green  an approximation of the  support of the unique solution to the advection equation with $b_{2n}$ at time $T>0$; in blue an approximation of the support of the unique solution to the advection equation with $b_{2n+1}$ at time $T >0$.}
  }
    \label{fig:odd-even}
\end{figure}
\begin{figure}[htbp]
	\begin{subfigure}{0.15\textwidth}
		\begin{tikzpicture}[scale=1.3]
			\draw[->] (-1,0)--(1,0);
			\draw[->] (0,-1)--(0,1);
			\filldraw[fill=red!30!white, fill opacity=0.5, draw=red] (-0.7,0.7) rectangle (-0.8,0.8);
			\filldraw[fill=vert!30!white, fill opacity=0.5, draw=vert] (0.7,0.7) rectangle (0.8,0.8);
			\draw[dotted,->](-0.7,0.7)--(-0.6,0.6);
			\draw[dotted,vert,->](-0.6,0.6)--(0.6,0.6);
			\draw[dotted,->](0.6,0.6)--(0.7,0.7);
			\draw node at (0.9,-0.1) {$x$};
			\draw node at (0.1,0.9) {$y$};
		\end{tikzpicture}
		\caption{$b_0$}
	\end{subfigure}
	\hfill
	\begin{subfigure}{0.15\textwidth}
		\begin{tikzpicture}[scale=1.3]
			\draw[->] (-1,0)--(1,0);
			\draw[->] (0,-1)--(0,1);
			\filldraw[fill=red!30!white, fill opacity=0.5, draw=red] (-0.7,0.7) rectangle (-0.8,0.8);
			\filldraw[fill=blue!30!white, fill opacity=0.5, draw=blue] (-0.7,-0.7) rectangle (-0.8,-0.8);
			\draw[dotted,->](-0.7,0.7)--(-0.3,0.3);
			\draw[dotted, blue,->](-0.3,0.3)--(-0.3,-0.3);
			\draw[dotted,->](-0.3,-0.3)--(-0.7,-0.7);
			\draw node at (0.9,-0.1) {$x$};
			\draw node at (0.1,0.9) {$y$};
		\end{tikzpicture}
		\caption{$b_1$}
	\end{subfigure}
	\hfill
	\begin{subfigure}{0.15\textwidth}
		\begin{tikzpicture}[scale=1.3]
			\draw[->] (-1,0)--(1,0);
			\draw[->] (0,-1)--(0,1);
			\filldraw[fill=red!30!white, fill opacity=0.5, draw=red] (-0.7,0.7) rectangle (-0.8,0.8);
			\filldraw[fill=vert!30!white, fill opacity=0.5, draw=vert] (0.7,0.7) rectangle (0.8,0.8);
			\draw[dotted,->](-0.7,0.7)--(-0.2,0.2);
			\draw[dotted,vert,->](-0.2,0.2)--(0.2,0.2);
			\draw[dotted,->](0.2,0.2)--(0.7,0.7);
			\draw node at (0.9,-0.1) {$x$};
			\draw node at (0.1,0.9) {$y$};
		\end{tikzpicture}
		\caption{$b_2$}
	\end{subfigure}
	\hfill
	\begin{subfigure}{0.15\textwidth}
		\begin{tikzpicture}[scale=1.3]
			\draw[->] (-1,0)--(1,0);
			\draw[->] (0,-1)--(0,1);
			\filldraw[fill=red!30!white, fill opacity=0.5, draw=red] (-0.7,0.7) rectangle (-0.8,0.8);
			\filldraw[fill=blue!30!white, fill opacity=0.5, draw=blue] (-0.7,-0.7) rectangle (-0.8,-0.8);
			\draw[dotted,->](-0.7,0.7)--(-0.15,0.15);
			\draw[dotted, blue,->](-0.15,0.15)--(-0.15,-0.15);
			\draw[dotted,->](-0.15,-0.15)--(-0.7,-0.7);
			\draw node at (0.9,-0.1) {$x$};
			\draw node at (0.1,0.9) {$y$};
		\end{tikzpicture}
		\caption{$b_3$}
	\end{subfigure}
	\caption{{A more detailed approximation of Figure \ref{fig:odd-even}. We represent the supports of the unique solutions to the advection equation with velocity fields $b_0, b_1, b_2, b_3 $. The dotted lines are an approximation of unique  Lagrangian trajectories for such velocity fields. The gluing velocity field $w_{n, \text{glue}}$ is responsible for the {\em vertical} dotted lines for $n$ odd; whereas it is responsible for  the {\em horizontal} dotted lines for $n$ even.}}
	\label{fig:nu_trajectories_bis}
\end{figure}
}
For convenience we set $T= \lim_{n \to \infty} T_n + \frac{5a_0}{v_0}$ which ensures $1> T > T_n$ for any $n \in \N$.
       \begin{remark}\label{rmk:time_dependency}
         Notice that we are imposing an even-even symmetry in space at level of the Hamiltonian function in the definition of $b_n$. Moreover the time symmetry $b_n(t,x) = - b_n(T_n -t,x)$ holds for all $n \ge 0$.
       \end{remark}
       
%
%
For the upcoming Proposition \ref{prop:basicprop}, we will need a good control over the mass that arrives up to the gluing pipe while keeping a reasonable distance to the boundary. We control it using the following tools. $\beta= \beta (n+1) = (\beta_1, \ldots, \beta_{{n+1}})$ be a multi-index such that  $\beta_k \leq  \frac{a_{k-2}}{a_{k}}$ for any $k \geq 1$, recalling the notation  {$a_{-1} =a_0$}. We now introduce
the sequence of arrival times { to the middle part of the support of the gluing velocity field}  as
$$t_{n, \beta} = \sum_{k=1}^{n} t_{k, k-1} + \sum_{k=1}^{n+1} \beta_k \overline{\tau}_k + \frac{t_{\text{glue}, n}}{2} \,,$$
{ and for any $q \in \N$ and for any $\beta = \beta(q) = (\beta_1, \ldots, \beta_{{q}})$  multi-index such that  $\beta_k \leq  \frac{a_{k-2}}{a_{k}}$ for any $k \geq 1$, we define the arrival times to the set $Q_q$ as 
$$ \overline{t}_{q, \beta} = \sum_{k=1}^{q} t_{k, k-1} + \sum_{k=1}^{q+1} \beta_k \overline{\tau}_k \,. $$
These times are represented in Figures \ref{fig:times}. }

\begin{figure}
	\newcommand{\loops}{
		\draw (-4,0) -- (-1,0) arc(-90:0:1) -- (0,1.5);
		\draw (-4,-1) -- (-1,-1) arc(-90:0:2) -- (1,1.5);
		\draw (-2,1.5) -- (-2,-1.7) arc(180:270:0.3) -- (1.7,-2) arc(-90:0:0.3) -- (2,1.5);
		\draw (-2.3,1.5) -- (-2.3,-1.7) arc(180:270:0.6) -- (1.7,-2.3) arc(-90:0:0.6) -- (2.3,1.5);
		\draw (1.4,-1.6) -- (1.4, -2.4) arc(180:270:0.2) -- (2.4,-2.6) arc(-90:0:0.2) -- (2.6,-1.6) arc(0:90:0.2) -- (1.6,-1.4) arc(90:180:0.2);
		\draw (1.2,-1.6) -- (1.2, -2.4) arc(180:270:0.4) -- (2.4,-2.8) arc(-90:0:0.4) -- (2.8,-1.6) arc(0:90:0.4) -- (1.6,-1.2) arc(90:180:0.4);
	}
	\newcommand{\labels}{
		\draw[->] (-1.5,-0.5) -- (-1.2,-0.5);
		\draw[->] (-1.5,-2.15) -- (-0.8,-2.15);
		\draw[->] (1.5,-2.7) -- (2.5,-2.7);
		\draw (-1.3,0.4) node {$ Q_0 $};
		\draw (-2.5,-2.5) node {$ Q_1 $};
		\draw (0.7,-3) node {$ Q_2 $};
	}
	\newcommand{\loopsbis}{
		\draw (-4,0) -- (-1,0) arc(-90:0:1) -- (0,1.5);
		\draw (-4,-1) -- (-1,-1) arc(-90:0:2) -- (1,1.5);
		\draw (-2,1.5) -- (-2,-1.5) arc(180:270:0.5) -- (1.5,-2) arc(-90:0:0.5) -- (2,1.5);
		\draw (-2.5,1.5) -- (-2.5,-1.5) arc(180:270:1) -- (1.5,-2.5) arc(-90:0:1) -- (2.5,1.5);
		\draw (1,-1.2) -- (1, -2.8) arc(180:270:0.2) -- (2.8,-3) arc(-90:0:0.2) -- (3,-1.2) arc(0:90:0.2) -- (1.2,-1) arc(90:180:0.2);
		\draw (0.8,-1.2) -- (0.8, -2.8) arc(180:270:0.4) -- (2.8,-3.2) arc(-90:0:0.4) -- (3.2,-1.2) arc(0:90:0.4) -- (1.2,-0.8) arc(90:180:0.4);
	}
	\newcommand{\labelsbis}{
		\draw[->] (-1.5,-0.5) -- (-1.2,-0.5);
		\draw[->] (-1.5,-2.15) -- (-0.8,-2.15);
		\draw[->] (1.5,-2.7) -- (2.5,-2.7);
		\draw (-1.07,0.5) node {$ Q_{q-1} $};
		\draw (-2.5,-2.5) node {$ Q_q $};
		\draw (0,-3) node {$ Q_{q+1} $};
	}
	\newcommand{\gluingloops}{
		\draw (-5,1.7)--(-5,-1.7) arc(180:270:0.3) -- (4.7,-2) arc(-90:0:0.3) -- (5,1.7) arc(0:90:0.3) -- (-4.7,2) arc(90:180:0.3);
		\draw (-5.3,1.7)--(-5.3,-1.7) arc(180:270:0.6) -- (4.7,-2.3) arc(-90:0:0.6) -- (5.3,1.7) arc(0:90:0.6) -- (-4.7,2.3) arc(90:180:0.6);
		\draw (-7,0)--(-4,0)arc(-90:0:1)--(-3,3);
		\draw (-7,-1)--(-4,-1)arc(-90:0:2)--(-2,3);
		\draw (7,0)--(4,0)arc(-90:-180:1)--(3,3);
		\draw (7,-1)--(4,-1)arc(-90:-180:2)--(2,3);
	}
	\newcommand{\gluinglabels}{
		\draw (-6.2,1) node {$Q_n$};
		\draw (7.4,1) node{$P_n(Q_n)$};
		\draw (0,-1) node{$Q_{n,\text{ glue}}$};
		\draw[->] (-4.5,-0.5) -- (-3.5,-0.5);
		\draw[->] (-4,-2.15) -- (-2,-2.15);
		\draw[->] (3.5,-0.5) -- (4.5,-0.5);
	}
	\centering
	\begin{subfigure}{0.24\textwidth}
		\begin{tikzpicture}[scale=0.5]
			\loops
			\labels
			\filldraw[fill=blue!30!white, fill opacity=0.5, draw=blue] (-3.3,-1) -- (-2.3,-1) -- (-2.3,0) -- (-3.3,0) --(-3.3,-1);
		\end{tikzpicture}
		\caption{$\bar{t}_{1,(0)} $}
	\end{subfigure}
	\begin{subfigure}{0.24\textwidth}
		\begin{tikzpicture}[scale=0.5]
			\loops
			\filldraw[fill=blue!30!white, fill opacity=0.5, draw=blue] (-3,-1) -- (-2,-1) -- (-2,0) -- (-3,0) --(-3,-1);
		\end{tikzpicture}
		\caption{$\bar{t}_{1,(1)}$}
	\end{subfigure}
	\begin{subfigure}{0.24\textwidth}
		\begin{tikzpicture}[scale=0.5]
			\loops
			\filldraw[fill=blue!30!white, fill opacity=0.5, draw=blue] (-2.6,-1) -- (-2,-1) -- (-2,0) -- (-2.6,0) --(-2.6,-1);
		\end{tikzpicture}
		\caption{$\bar{t}_{1,(\frac{a_0}{a_1} - 1)}$}
	\end{subfigure}
	\begin{subfigure}{0.24\textwidth}
		\begin{tikzpicture}[scale=0.5]
			\loops
			\filldraw[fill=blue!30!white, fill opacity=0.5, draw=blue] (-2.3,-1) -- (-2,-1) -- (-2,0) -- (-2.3,0) --(-2.3,-1);
		\end{tikzpicture}
		\caption{$\bar{t}_{1,(\frac{a_0}{a_1})} $}
	\end{subfigure}
	\medskip
	\[
	\begin{tikzpicture}[scale = 2.4]
		\draw (0,0) --(2.7,0);
		\draw[dotted] (2.7,0) -- (3.3,0);
		\draw (3.3,0) -- (6,0);
		\foreach \x in {0,6}
		\draw (\x cm,2pt) -- (\x cm,-2pt);
		\foreach \x in {1,2,3.5,4.5}
		\draw (\x cm,1pt) -- (\x cm,-1pt);
		\draw (0,0) node[below=3pt] {$ \bar{t}_{0,(0)} = 0 $} node[above=3pt] {$   $};
		\draw (1,0) node[below=3pt] {$ \bar{t}_{0,(1)} $} node[above=3pt] {$  $};
		\draw (2,0) node[below=3pt] {$ \bar{t}_{0,(2)} $} node[above=3pt] {$  $};
		\draw (3.5,0) node[below=3pt] {$ \bar{t}_{0,(\frac{a_0}{a_1} - 1)} $} node[above=3pt] {$   $};
		\draw (4.5,0) node[below=3pt] {$ \bar{t}_{0,(\frac{a_0}{a_1})} $} node[above=3pt] {$   $};
		\draw (6,0) node[below=3pt] {$ T_n $} node[above=3pt] {$  $};
	\end{tikzpicture}
	\]
	\medskip
	\centering
	\begin{subfigure}{0.24\textwidth}
		\begin{tikzpicture}[scale=0.5]
			\loops
			\labels
			\filldraw[fill=blue!30!white, fill opacity=0.5, draw=blue] (-3,-1) -- (-2.3,-1) -- (-2.3,0) -- (-3,0) --(-3,-1);
			\filldraw[fill=blue!30!white, fill opacity=0.5, draw=blue] (-0.1,-2.3) -- (0.9,-2.3) -- (1.2,-2) -- (0.2,-2) --(-0.1,-2.3);
		\end{tikzpicture}
		\caption{$\bar{t}_{2,(1,0)}$}
	\end{subfigure}
	\begin{subfigure}{0.24\textwidth}
		\begin{tikzpicture}[scale=0.5]
			\loops
			\filldraw[fill=blue!30!white, fill opacity=0.5, draw=blue] (-3,-1) -- (-2.3,-1) -- (-2.3,0) -- (-3,0) --(-3,-1);
			\filldraw[fill=blue!30!white, fill opacity=0.5, draw=blue] (0.2,-2.3) -- (1.1,-2.3) -- (1.4,-2) -- (0.5,-2) --(0.2,-2.3);
		\end{tikzpicture}
		\caption{$\bar{t}_{2,(0,1)}$}
	\end{subfigure}
	\begin{subfigure}{0.24\textwidth}
		\begin{tikzpicture}[scale=0.5]
			\loops
			\filldraw[fill=blue!30!white, fill opacity=0.5, draw=blue] (-3,-1) -- (-2.3,-1) -- (-2.3,0) -- (-3,0) --(-3,-1);
			\filldraw[fill=blue!30!white, fill opacity=0.5, draw=blue] (0.5,-2.3) -- (1.4,-2.3) -- (1.4,-2) -- (0.8,-2) --(0.5,-2.3);
		\end{tikzpicture}
		\caption{$\bar{t}_{2,(0,2)}$}
	\end{subfigure}
	\begin{subfigure}{0.24\textwidth}
		\begin{tikzpicture}[scale=0.5]
			\loops
			\filldraw[fill=blue!30!white, fill opacity=0.5, draw=blue] (-3,-1) -- (-2.3,-1) -- (-2.3,0) -- (-3,0) --(-3,-1);
			\filldraw[fill=blue!30!white, fill opacity=0.5, draw=blue] (0.9,-2.3) -- (1.4,-2.3) -- (1.4,-2) -- (1.1, -2) --(0.9,-2.3);
		\end{tikzpicture}
		\caption{$\bar{t}_{2,(0,\frac{a_0}{a_2})} $}
	\end{subfigure}
	\medskip
	\begin{tikzpicture}[scale = 1.2]
		\draw (-1,0) --(7,0);
		\draw[dotted] (7,0) -- (8,0);
		\draw (8,0) -- (11,0);
		\foreach \x in {0,10}
		\draw (\x cm,3pt) -- (\x cm,-3pt);
		\foreach \x in {3,3.1,3.2,3.3,3.4,3.5,3.6,3.7,3.8,3.9,4}
		\draw (\x cm,1pt) -- (\x cm,-1pt);
		\draw (0,0) node[below=3pt] {$ \bar{t}_{0,(1)} $} node[above=3pt] {$   $};
		\draw (10,0) node[below=3pt] {$ \bar{t}_{0,(2)} $} node[above=3pt] {$  $};
		\draw (3.5,0) node[below=3pt] {$\bar{t}_{1,(1,0)} \, ... \, \bar{t}_{1,(1,\frac{a_0}{a_2})}$} node[above=3pt] {$  $};
	\end{tikzpicture}
	\medskip
	\centering
	\begin{subfigure}{0.24\textwidth}
		\begin{tikzpicture}[scale=0.5]
			\loopsbis
			\labelsbis
			\filldraw[fill=blue!30!white, fill opacity=0.5, draw=blue] (-4,-1) -- (-2,-1) -- (-2,0) -- (-3.5,0) --(-4,-1);
		\end{tikzpicture}
		\caption{ $\bar{t}_{q,(\beta_1,...,\beta_{q})}$}
	\end{subfigure}
	\hfill
	\begin{subfigure}{0.24\textwidth}
		\begin{tikzpicture}[scale=0.5]
			\loopsbis
			\filldraw[fill=blue!30!white, fill opacity=0.5, draw=blue] (-4,-1) -- (-2.5,-1) -- (-2.5,0) -- (-3.5,0) --(-4,-1);
			\filldraw[fill=blue!30!white, fill opacity=0.5, draw=blue] (-0.2,-2) -- (-0.45,-2.5) -- (0.55,-2.5) -- (0.79,-2) --(-0.2,-2);
		\end{tikzpicture}
		\caption{$\bar{t}_{q +1,(\beta_1,...,\beta_{q},0)}$}
	\end{subfigure}
	\hfill
	\begin{subfigure}{0.24\textwidth}
		\begin{tikzpicture}[scale=0.5]
			\loopsbis
			\filldraw[fill=blue!30!white, fill opacity=0.5, draw=blue] (-4,-1) -- (-2.5,-1) -- (-2.5,0) -- (-3.5,0) --(-4,-1);
			\filldraw[fill=blue!30!white, fill opacity=0.5, draw=blue] (0,-2) -- (-0.25,-2.5) -- (0.75,-2.5) -- (1,-2) --(0,-2);
		\end{tikzpicture}
		\caption{$\bar{t}_{q+1,(\beta_1,...,\beta_{q},1)}$}
	\end{subfigure}
	\hfill
	\begin{subfigure}{0.24\textwidth}
		\begin{tikzpicture}[scale=0.5]
			\loopsbis
			\filldraw[fill=blue!30!white, fill opacity=0.5, draw=blue] (-4,-1) -- (-2.5,-1) -- (-2.5,0) -- (-3.5,0) --(-4,-1);
			\filldraw[fill=blue!30!white, fill opacity=0.5, draw=blue] (0.8,-2) -- (0.55,-2.5) -- (1,-2.5) -- (1,-2) --(0.8,-2);
		\end{tikzpicture}
		\caption{$\bar{t}_{q+1,(\beta_1,...,\beta_{q},\frac{a_{q-1}}{a_{q+1}})}$}
	\end{subfigure}
	\medskip
	\begin{tikzpicture}[scale = 1.2]
		\draw (-1,0) --(7,0);
		\draw[dotted] (7,0) -- (8,0);
		\draw (8,0) -- (11,0);
		\foreach \x in {0,10}
		\draw (\x cm,3pt) -- (\x cm,-3pt);
		\foreach \x in {3,3.1,3.2,3.3,3.4,3.5,3.6,3.7,3.8,3.9,4}
		\draw (\x cm,1pt) -- (\x cm,-1pt);
		\draw (0,0) node[below=3pt] {$ \bar{t}_{q-1,(\beta_1,...,\beta_{q})} $} node[above=3pt] {$   $};
		\draw (10,0) node[below=3pt] {$ \bar{t}_{q-1,(\beta_1,...,\beta_{q}+1)} $} node[above=3pt] {$  $};
		\draw (3.5,0) node[below=3pt] {$\bar{t}_{q,(\beta_1,...,\beta_q,0)} \, ... \, \bar{t}_{q,(\beta_1,...,\beta_q,\frac{a_{q-1}}{a_{q+1}})}$} node[above=6pt] {$  $};
	\end{tikzpicture}
	\medskip
	\begin{subfigure}{0.3\textwidth}
		\begin{tikzpicture}[scale=0.3]
			\gluingloops
			\gluinglabels
			\filldraw[fill=blue!30!white, fill opacity=0.5, draw=blue] (-7,-1)--(-5,-1)--(-5,0)--(-6,0)--(-7,-1);
		\end{tikzpicture}
		\caption{$\bar{t}_{n,(\beta_1,...,\beta_{n+1})}$}
	\end{subfigure}
	\hfill
	\begin{subfigure}{0.3\textwidth}
		\begin{tikzpicture}[scale=0.3]
			\gluingloops
			\filldraw[fill=blue!30!white, fill opacity=0.5, draw=blue] (-7,-1)--(-5.3,-1)--(-5.3,0)--(-6,0)--(-7,-1);
			\filldraw[fill=blue!30!white, fill opacity=0.5, draw=blue] (-0.65,-2.3)--(0.35,-2.3)--(0.65,-2)--(-0.35,-2)--(-0.65,-2.3);
		\end{tikzpicture}
		\caption{$t_{n,(\beta_1,...,\beta_{n+1})}$}
	\end{subfigure}
	\hfill
	\begin{subfigure}{0.3\textwidth}
		\begin{tikzpicture}[scale=0.3]
			\gluingloops
			\filldraw[fill=blue!30!white, fill opacity=0.5, draw=blue] (-7,-1)--(-5.3,-1)--(-5.3,0)--(-6,0)--(-7,-1);
			\filldraw[fill=blue!30!white, fill opacity=0.5, draw=blue] (5,-1)--(5.3,-1)--(5.3,0)--(5,0)--(5,-1);
		\end{tikzpicture}
		\caption{$\bar{s}_{n,(\beta_1,...,\beta_{n+1}})$}
	\end{subfigure}
	\medskip
	\begin{tikzpicture}[scale=1.2]
		\draw (-6,0)--(6,0);
		\foreach \x in {-5,-4.5,-4,4,4.5,5}
		\draw (\x cm,1pt) -- (\x cm,-1pt);
		\draw (-4.5,0) node[below=3pt] {$\bar{t}_{n,\beta} \, t_{n,\beta} \, \bar{s}_{n,\beta}$} node[above=3pt] {$  $};
		\draw (4.5,0) node[below=3pt] {$ \bar{t}_{n,\beta'} \, t_{n,\beta'} \, \bar{s}_{n,\beta'}$} node[above=3pt] {$  $};
	\end{tikzpicture}
	\caption{{Time representations of $\{ \overline{t}_{q, \beta} \}_{q, \beta}$. We represent in blue the support of the solution to the advection equation with velocity field $b_n$ with an initial datum with support in $Q_0$. We fix  the notation $(q,\beta) = (q,(\beta_1,...,\beta_{q+1}))$. We fix $q=1$  in (A), (B), (C), (D); $q=2$ in (E), (F), (G), (H). We show the transition of the times $\overline{t}_{q, \beta} $ from $q$  to $q+1$ in (I), (J), (K), (L). Finally, in (N) we show the arrival time to the middle part of the support of the gluing velocity field $t_{n, \beta}$.  
	Notice that the magnitude of the velocity field in $w_{q+1}$ is extremely larger than the magnitude of the velocity field $w_q$, which implies that the solution of the advection equation travels much faster inside the $q+1$ pipe, while the support of the solution of the advection equation remains almost constant inside the $q$ pipe. In these figures we keep the support of the solution inside $q$ pipe constant while the one in the $q+1$ pipe is travelling. We consequently represent the  order of events as timeline below the figures.
	In the last timeline interval we use the notation $(q,\beta) = (q,\beta_1,...,\beta_{q+1})$ and $\beta' = (\beta_1,...,\beta_{q+1}+1)$.}}
	\label{fig:times}
\end{figure}

 {We then} define a proper subset of these times, i.e.
  \begin{subequations}\label{eq:J-n}
\begin{align} 
B_n & = \left \{ \beta = (\beta_1, \ldots, \beta_{n+1}): 4 \frac{a_{k-2}^{1+ \varepsilon}}{a_{k}} \leq \beta_{k} \leq  \frac{a_{k-2}}{a_{k}} - 4 \frac{a_{k-2}^{1+\varepsilon}}{a_{k}} \qquad  \forall k \in \{1, \ldots , n +1\}  \right \}\,,
\\
J_n & = \left \{ t_{n, \beta} : \beta \in B_n \right \} \,.
\end{align}
\end{subequations}
Thanks to \eqref{eq:sum-a_k} the cardinality of these sets are bounded from above and below by 
\begin{equation} \label{eq:cardinality}
\frac{a_0^2}{a_n a_{n+1}} \geq \#(J_n)= \#(B_n) \geq \frac{9}{10} \frac{a_0^2}{a_n a_{n+1}} \,.
\end{equation}
{
Indeed
\[ \# (B_n) \le \prod_{k=1}^{n+1} \frac{a_{k-2}}{a_k}\]
while
\begin{align*}
	\# (B_n) &= \prod_{k=1}^{n+1} \left(\frac{a_{k-2}}{a_k}-8\frac{a_{k-2}^{1+\varepsilon}}{a_k}\right) = \frac{a_0^2}{a_n a_{n+1}}\prod_{k=1}^{n+1} (1-8a_{k-2}^\varepsilon) 
	 \geq  \frac{a_0^2}{a_n a_{n+1}}\prod_{k=10}^{n+1} (1-\frac{1}{k^2})  \geq \frac{9}{10} \frac{a_0^2}{a_n a_{n+1}}  \,,
\end{align*}
where in the second to last inequality we used the 
 smallness of $a_0$ and in the last inequality we used that $\prod_{k=10}^{n+1} (1-\frac{1}{k^2}) = \frac{9}{10} \frac{n+2}{n+1}$.
}

{Lastly, we will need a starting zone $R_0$ that is a square of width $a_0$ centered in a suitable point $x_0 \in \T^2$. More precisely, there exists $x_0 \in \T^2$ such that
	\begin{equation}\label{eq:R0}
		R_0 = x_0 + [0,a_0]^2 \,,
	\end{equation} 
	where $x_0$ is chosen so that 
	$$  \overline{\bX}^0_{ k \frac{a_1}{v_0}} (R_0) \cap S_0 = S_0 \text{ for any } k \in \{1 , \ldots, \frac{a_0}{a_1} \}\,,$$
 where $\overline{\bX}^0$ is the forward flow map of $w_0$. This set is represented in Figure \ref{fig:Ssets}.}
{Recall the that for any set $A \in \T^d$, $A[\varepsilon]$ denotes its $\varepsilon$-restriction as given in \eqref{eq:restriction}. We are now in position of proving the key properties we need on the velocity field $b$ for the advection equation. }

\begin{prop}\label{prop:basicprop}
For any $p \in (1,2)$, there exist a sequence  of divergence free velocity fields $\left \{ \bb_n \right \}_{n \in \N} \subset L^\infty ([0,1]; L^\infty (\R^2) \cap BV (\R^2)) $ with the following properties
\begin{enumerate}
\item \label{eq:prop:item1} $\bb_n \to \bb $ in $L^1_{t,x}$ and $\sup_{n \in \N} \| \bb_n \|_{L^\infty ((0,1); L^p (\T^2))} < \infty  $.
\item (Backward property). \label{eq:prop:item2}  For any $n \in \N$ there exists a discrete subset of times $J_n \subset [0,1]$ and a subset  $S_{n, \text{glue} } \subset \R^2$ such that
$$ S_{n, \text{glue} } = \begin{cases}
 \{ (x,y) \in \supp (w_{n, \text{glue}}):   \frac{-a_n + 8 a_n^{1+ \varepsilon}}{2} < x < \frac{a_n - 8 a_n^{1+ \varepsilon}}{2}  \,,  y < O_{n, \text{glue}, y}    \} &  n \in 2 \N \,,
\\
 \{ (x,y) \in \supp (w_{n, \text{glue}}):  \frac{-a_n + 8 a_n^{1+ \varepsilon}}{2}  < y <\frac{a_n - 8 a_n^{1+ \varepsilon}}{2}    \,, x < O_{n, \text{glue}, x}\} & n \in 2\N +1 \,,
\end{cases}$$
 such that for any $t_{n, \beta} \in J_n$ there exists a sequence $\{ \bar{t}_{q, \beta} \}_{q=-1}^n$ such that for any $x \in S_{n, \text{glue}} [2a_{n+1}^{1+ \varepsilon}]$ the backward flow $\bX^n_{t_{n , \beta}, \cdot }$ of $\bb_n$ satisfies
$$ \bX^n_{t_{n , \beta}, \bar{t}_{q, \beta}} (x) \in  Q_q [2 a_q^{1+ \varepsilon}] \cap S_q \left  [a_{q+1}^{1+ \varepsilon} \right ] \qquad \forall \, q \in \{0, 1, \ldots, n \} \,,$$
and  $\bar{t}_{q+1, \beta} \geq \bar{t}_{q, \beta}$ for any $q$ with $|\bar{t}_{q+1, \beta} - \bar{t}_{q, \beta}| \leq 6 \frac{a_{q}}{v_{q+1}}$.
Finally, $\bX^n_{{t}_{n, \beta}, t} (x) \in Q_q [a_q^{1+ \varepsilon}]$ for any $t \in [\bar{t}_{q-1, \beta}, \bar{t}_{q, \beta}]$ for any $q$, with the notation $\bar{t}_{-1, \beta} =0$. {In particular}, for any $n \in \N$, $x \in S_{n, \text{glue}}[2a_{n+1}^{1+\varepsilon}]$ and $t_{n, \beta} \in J_n$ we have 
\begin{equation} \label{eq:flow-backward-R-0}
 \bX^{n}_{t_{n, \beta} , 0} (x) \in R_0 [a_0^{1 + \varepsilon}] \,.
\end{equation}
{We recall $R_0$ is defined in \eqref{eq:R0}.}
\item (Forward property). \label{eq:prop:item3}  For any ${t}_{n , \beta} \in J_n$ there exists
  a sequence $\{ \bar{s}_{q, \beta} \}_{q=1}^{n}$ such that for any $x \in S_{n, \text{glue}} [2a_{n+1}^{1+ \varepsilon}]$ the forward flow $\bX^n_{{t}_{n , \beta}, \cdot }$ of $\bb_n$ satisfies
$$ \bX^n_{{t}_{n , \beta}, \bar{s}_{q, \beta}} (x) \in P_n ( Q_q [2 a_q^{1+ \varepsilon}] \cap S_q \left  [a_{q+1}^{1+ \varepsilon} \right ]) \qquad \forall \, q \in \{0, 1, \ldots, n \} \,,$$
where $P_n: \R^2 \to \R^2$ is the reflection on $y$-axis for $n$ even and is the reflection on $x$-axis for $n $ odd. Furthermore, $\bX^n_{{t}_{n , \beta}, s} (x) \in P_n (Q_q [a_q^{1+ \varepsilon}])$ for any $s \in [\bar{s}_{q, \beta}, \bar{s}_{q-1, \beta}]$ for any $q$. Finally, the times $ \{\bar{s}_{q, \beta} \}_{q}$ satisfy  $\bar{s}_{q, \beta} \geq \bar{s}_{q+1, \beta}$ for any $q$ with $|\bar{s}_{q+1, \beta} - \bar{s}_{q, \beta}| \leq 6 \frac{a_{q}}{v_{q+1}}$.  In particular, for any $x \in S_{n, \text{glue}}[2a_{n+1}^{1+\varepsilon}]$ and $t \geq T_n$ we have $\bX^{n}_{t_{n, \beta}, t} (x) \in  P_n (R_0)$ for any $n \in \N$, which implies
\begin{subequations} \label{eq:flow-odd-even}
\begin{align}
\bX^{n}_{t_{n, \beta}, t} (x)  \in    \left \{ (x,y): x,y > a_0 \right \} \quad \text{for any  } n \in 2 \N \,,
\\
\bX^{n}_{t_{n, \beta}, t} (x)  \in \left \{ (x,y): x,y < - a_0 \right \} \quad \text{for any  } n \in 2\N +1  \,.
\end{align}
\end{subequations}
 \item\label{prop:item:nablab} There exists a constant $C>0$ independent on all the parameters, such that  for any $x \in S_{n, \text{glue}} [a_{n+1}^{1+\varepsilon}] $ and for any $  1 \le q \le n,$ we have 
        \begin{equation} \label{eq:gronwall}
               \int_{\overline{s}_{q+1 , \beta}}^{\overline{s}_{q, \beta}} \|\nabla w_{q+1} (s,\cdot)\|_{L^\infty (B_{a_{q+1}^{1+\varepsilon}} (\gamma (s) ) )} ds +  \int_{\overline{t}_{q, \beta}}^{\overline{t}_{q+1, \beta}} \|\nabla w_{q+1} (s,\cdot)\|_{L^\infty (B_{a_{q+1}^{1+\varepsilon}} (\gamma (s) ) )} ds \le C 
\end{equation} 
        where $\gamma(s)$ is a shorthand notation for $\bX^n_{{t}_{n, \beta},s}(x)$ (forward or backward flow respectively if $s \geq t_{n, \beta}$ or $s \leq t_{n, \beta}$), and $B_{a_{q+1}^{1+\varepsilon}} (\gamma (s) )$ is a ball of radius $a_{q+1}^{1+ \varepsilon}$ around $\gamma(s)$.
\end{enumerate}
\end{prop}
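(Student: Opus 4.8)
The plan is to establish the four items in order: the first and the last are soft consequences of the construction together with Lemma~\ref{lemma:building-block}, while the backward and forward properties form the technical core and are proved by an induction over the scale index.

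\emph{The first item.} Each $b_n$ is, up to four spatially reflected copies and the gluing term, the finite sum $\sum_{q=0}^{n}w_q$ from \eqref{d:w_q}. By Lemma~\ref{lemma:building-block} and \eqref{eq:Linfty-w_q} one has $\|w_q(t,\cdot)\|_{L^\infty}\le v_q$ and $|\supp w_q(t,\cdot)|\lesssim l_q a_q\lesssim a_{q-1}a_q$, whence, using $a_{q-1}=a_q^{1/(1+\delta)}$ from \eqref{d:a-q} and $v_q=a_q^{-\alpha}$,
\[
\|w_q(t,\cdot)\|_{L^p(\T^2)}^{p}\lesssim a_{q-1}a_q v_q^{\,p}=a_q^{\,\frac{2+\delta}{1+\delta}-\alpha p}.
\]
By the right inequality in \eqref{hyp:ha} the exponent is strictly positive, so the super-exponential decay of $(a_q)$ makes $\sum_q\|w_q\|_{L^\infty_tL^p_x}^p$ converge; since at most two of the supports of $w_{q-1},w_q,w_{q+1}$ meet at any point (Subsection~\ref{subsec:idea-loop}), this yields $\sup_n\|b_n\|_{L^\infty_tL^p_x}<\infty$. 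The same estimate applied to the tail $\sum_{q>n}$, together with $\|w_{n,\text{glue}}\|_{L^\infty_tL^1_x}\le 10\|w_{n+1}\|_{L^\infty_tL^1_x}\to 0$ (Remark~\ref{rmk:time_dependency}), shows that $(b_n)$ is Cauchy in $L^1_{t,x}$; call the limit $b$. Every summand has the form $\nabla^\perp H$, so $b_n$ and $b$ are divergence free.

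\emph{The backward and forward properties.} Fix $n$, $\beta\in B_n$, $t_{n,\beta}\in J_n$ and $x\in S_{n,\text{glue}}[2a_{n+1}^{1+\varepsilon}]$, and write $\gamma(s)=\bX^n_{t_{n,\beta},s}(x)$. I prove the backward statement by descending induction on $q$ from $q=n$ to $q=-1$, with inductive hypothesis ``$\gamma(\bar t_{q,\beta})\in Q_q[2a_q^{1+\varepsilon}]\cap S_q[a_{q+1}^{1+\varepsilon}]$''. For the base step, $t_{n,\beta}$ is by construction the arrival time to the middle of $\supp w_{n,\text{glue}}$ and therefore lies inside an on-interval of $w_{n,\text{glue}}$; flowing backward for time $t_{\text{glue},n}/2$ with the gluing building block (the only active field there) carries $\gamma$ into $Q_n\cap S_n$, and since a single building block preserves the distance to the pipe boundary along its streamlines (exactly on the straight segments and on the concentric corner arcs) the $2a_{n+1}^{1+\varepsilon}$-margin survives. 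For the inductive step one checks, from the explicit values of $\overline{\tau}_q$, $t_q$, $t_{q,q-1}\le 6a_{q-1}/v_q$ and the definition of $B_n$ in \eqref{eq:J-n}, that on $[\bar t_{q-1,\beta},\bar t_{q,\beta}]$ the field $w_q$ is active for one on-burst while every $w_{q'}$ with $q'\ge q+1$ is off; as $\|w_{q-1}\|_{L^\infty}=v_{q-1}\ll v_q=\|w_q\|_{L^\infty}$, on this window $\gamma$ shadows a single streamline of $w_q$ from $S_q$ to $S_{q-1}$ with position error $\lesssim a_{q-1}(v_{q-1}/v_q)=a_{q-1}^{1+\delta\alpha}$, which is $\ll a_q^{1+\varepsilon}=a_{q-1}^{(1+\delta)(1+\varepsilon)}$ precisely by \eqref{hyp:epsilon}. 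Distance-to-boundary preservation then keeps $\gamma$ in $Q_q[a_q^{1+\varepsilon}]$ throughout, and the arrival membership $\gamma(\bar t_{q-1,\beta})\in Q_{q-1}[2a_{q-1}^{1+\varepsilon}]\cap S_{q-1}[a_q^{1+\varepsilon}]$ uses (i) the central range imposed by $B_n$, which places the endpoint on a ``chunk'' of the $Q_{q-1}$-loop at distance $\ge 2a_{q-1}^{1+\varepsilon}$ from its corners, and (ii) the scale separation $a_q=a_{q-1}^{1+\delta}<a_{q-1}^{1+\varepsilon}$ (valid since $\varepsilon<\delta$ by \eqref{hyp:epsilon}) together with the placement of the centers $O_q$, which forces the thin overlap $S_{q-1}$ to lie well inside $Q_{q-1}$. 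The step $q=-1$ uses the defining property of $x_0$ in \eqref{eq:R0} to land in $R_0[a_0^{1+\varepsilon}]$, giving \eqref{eq:flow-backward-R-0}. The forward property is proved by the entirely analogous induction run on the mirrored chain $P_n(Q_0),\dots,P_n(Q_n),P_n(R_0)$: the reflection symmetry $b_n(t,x)=-b_n(T_n-t,x)$ of Remark~\ref{rmk:time_dependency} and the spatial reflections in \eqref{d:symmetry:VF} make the mirrored chain carry exactly the reversed dynamics, producing the times $\{\bar s_{q,\beta}\}_q$ and the membership $\bX^n_{t_{n,\beta},t}(x)\in P_n(R_0)$ for $t\ge T_n$, whence \eqref{eq:flow-odd-even} because $P_n(R_0)$ lies in the open first (resp.\ third) quadrant for $n$ even (resp.\ odd) by the table \eqref{eq:table}.

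\emph{The Grönwall-type bound.} By the two previous properties, for $s\in[\bar t_{q,\beta},\bar t_{q+1,\beta}]$ (resp.\ $s\in[\bar s_{q+1,\beta},\bar s_{q,\beta}]$) the curve $\gamma$ stays in $Q_{q+1}[a_{q+1}^{1+\varepsilon}]$ (resp.\ its $P_n$-image); when $w_{q+1}$ is off we have $\nabla w_{q+1}(s,\cdot)\equiv 0$ by \eqref{d:w_q}, and during its on-times $\gamma$ moves with speed $\asymp v_{q+1}$, since all other relevant fields are off or much slower there. As $|\bar t_{q+1,\beta}-\bar t_{q,\beta}|\le 6a_q/v_{q+1}$ while the perimeter of $Q_{q+1}$ is $\asymp a_q$, the curve covers only an $O(1)$ amount of the loop, hence passes through each of the finitely many corner arcs of $Q_{q+1}$ a bounded number of times; applying the last bullet of Lemma~\ref{lemma:building-block} to $w_{q+1}$ with restriction parameter $a_{q+1}^{1+\varepsilon}$ (its constant being independent of $a,v,l$) bounds each of the two integrals by a universal $C$, and adding them gives \eqref{eq:gronwall}.

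\emph{Main obstacle.} The hard part will be the combined spatial–temporal bookkeeping inside the inductive step: one must control the distance of $\gamma$ to each of the many nested pipe walls with precision $a_q^{1+\varepsilon}$ across every scale transition and, simultaneously, guarantee that $\gamma$ reaches each overlap $S_q$ inside an on-window of the next velocity field, so that the intermittency is ``synchronised'' and no trajectory is pushed against a wall (the mechanism of Figure~\ref{fig:intersection}). This is exactly what forces the precise forms of $\overline{\tau}_q$, $t_q$, $t_{q,q-1}$, of the restricted set $B_n$, and of the parameter windows \eqref{hyp:delta}, \eqref{hyp:ha}, \eqref{hyp:epsilon}; once these are fixed, the rest is a long but elementary verification of inequalities between the parameters.
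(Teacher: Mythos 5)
Your treatment of the core of the proposition (items (2)--(4)) follows the same route as the paper: a scale-by-scale induction in which the flow of $b_n$ is compared with the flow of a single building block $w_q$, the discrepancy being bounded by (speed of the slower neighbour) $\times$ (time spent in the overlap) $\approx a_{q-1}v_{q-1}/v_q \le a_q^{1+\varepsilon}$, which is exactly the paper's estimate on the set $D_q$ and is guaranteed by \eqref{hyp:epsilon}; the restriction to $\beta\in B_n$ keeps the arrival points away from the corners, the forward property is reduced to the backward one through $b_n(t,x)=-b_n(T_n-t,x)$ and the reflections in \eqref{d:symmetry:VF}, and \eqref{eq:gronwall} comes from the last bullet of Lemma~\ref{lemma:building-block} combined with the comparison estimate. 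One sketch-level imprecision: your claim that on $[\bar t_{q-1,\beta},\bar t_{q,\beta}]$ ``every $w_{q'}$ with $q'\ge q+1$ is off'' is not literally true, since those fields are switched on periodically with period $\overline\tau_{q'}$ inside that window; what the construction actually provides is that $\bar t_{q,\beta}$ is the left endpoint of an on-window of $w_{q+1}$, so that, going backward, $w_{q+1}$ has just switched off and the trajectory leaves its support before the preceding on-window is reached. This synchronisation is precisely the bookkeeping you defer to your ``main obstacle'' paragraph, so I read it as an imprecision rather than a wrong step.

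There is, however, a concrete gap in item (1). You argue that $\{b_n\}$ is Cauchy in $L^1_{t,x}$ because the tail $\sum_{q>n}w_q$ and the gluing fields are small, treating $b_{n+1}-b_n$ as if it consisted only of the newly added terms. This is false for the velocity field as defined in \eqref{d:symmetry:VF}: the three reflected copies are evaluated at time $T_n-t$, and $T_n$ changes with $n$ (see \eqref{d:T_n}), so $b_{n+1}-b_n$ also contains the time-shifted differences of \emph{all} the fields $w_k$, $k\le n$, in the reflected quadrants. The paper handles this by first showing $T_{n+1}-T_n\lesssim a_{n-1}^2$ and then bounding the resulting $L^1_{t,x}$ discrepancy by the number of on/off switchings of $w_k$ (of order $1/\overline\tau_k$) times the shift times $\sup_t\|w_k(t,\cdot)\|_{L^1_x}$, i.e.\ by $a_{n-2}^2\sum_{k\le n}\overline\tau_k^{-1}v_k a_{k-1}a_k$, and this is where the additional numerical constraints $\alpha<11/10$, $\delta<1/10$ enter. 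Without this step your proposal does not establish $b_n\to b$ in $L^1_{t,x}$, and hence does not produce the limiting field $b$ at all; the rest of item (1) (the uniform $L^\infty_tL^p_x$ bound via the right-hand inequality of \eqref{hyp:ha}, and divergence-freeness) is fine.
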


\begin{figure}[!htb]
	\centering
	\begin{tikzpicture}[scale=5]
		\draw[->] (-1,0) -- (1,0);
		\draw[->] (0,-0.1) -- (0,0.6);
		\draw (-0.3,0.6) -- (-0.3, 0.4) arc (0:-90:0.1) -- (-0.63, 0.3) arc(270:180:0.1)-- (-0.73,0.6);
		\draw (-0.2,0.6) -- (-0.2, 0.4) arc(360:270:0.2)--(-0.63, 0.2) arc(270:180:0.2) -- (-0.83,0.6);
		\draw (0.3,0.6) -- (0.3, 0.4) arc (180:270:0.1) -- (0.63, 0.3) arc(-90:0:0.1)-- (0.73,0.6);
		\draw (0.2,0.6) -- (0.2, 0.4) arc(180:270:0.2)--(0.63, 0.2) arc(-90:0:0.2) -- (0.83,0.6);
		\draw (-0.5,0.47) arc(180:90:0.03) -- (-0.13,0.5) arc(90:0:0.03) -- (-0.1, 0.13) arc(360:270:0.03) -- (-0.47, 0.1) arc(270:180:0.03) -- (-0.5,0.47);
		\draw (-0.53,0.47) arc(180:90:0.06) -- (-0.13,0.53) arc(90:0:0.06) -- (-0.07, 0.13) arc(360:270:0.06)--(-0.47, 0.07) arc(270:180:0.06) -- (-0.53,0.47);
		\draw (0.5,0.47) arc(0:90:0.03) -- (0.13,0.5) arc(90:180:0.03) -- (0.1, 0.13) arc(180:270:0.03) -- (0.47, 0.1) arc(270:360:0.03) -- (0.5,0.47);
		\draw (0.53,0.47) arc(0:90:0.06) -- (0.13,0.53) arc(90:180:0.06) -- (0.07, 0.13) arc(180:270:0.06)--(0.47, 0.07) arc(270:360:0.06) -- (0.53,0.47);
		\draw (-0.16,0.15) arc(180:90:0.01) -- (0.15,0.16) arc(90:0:0.01) -- (0.16, 0.05) arc(360:270:0.01) -- (-0.15, 0.04) arc(270:180:0.01) -- (-0.16,0.15);
		\draw (-0.17,0.15) arc(180:90:0.02) -- (0.15,0.17) arc(90:0:0.02) -- (0.17, 0.05) arc(360:270:0.02)--(-0.15, 0.03) arc(270:180:0.02) -- (-0.17,0.15);
		\filldraw[fill=red!30!white, fill opacity=0.5, draw=red](-0.63,0.3)--(-0.53,0.3)--(-0.53,0.2)--(-0.63,0.2)--(-0.63,0.3);
		\filldraw[fill=blue!30!white, fill opacity=0.5, draw=blue](-0.53,0.3)--(-0.5,0.3)--(-0.5,0.2)--(-0.53,0.2)--(-0.53,0.3);
		\filldraw[fill=blue!30!white, fill opacity=0.5, draw=blue](-0.17,0.1)--(-0.16,0.1)--(-0.16,0.07)--(-0.17,0.07)--(-0.17,0.1);
		\filldraw[fill=yellow, fill opacity=0.2, draw=yellow!60!black](0.53,0.3)--(0.5,0.3)--(0.5,0.2)--(0.53,0.2)--(0.53,0.3);
		\filldraw[fill=yellow, fill opacity=0.2, draw=yellow!60!black](0.17,0.1)--(0.16,0.1)--(0.16,0.07)--(0.17,0.07)--(0.17,0.1);
		\filldraw[fill=vert!30!white, fill opacity=0.8,
		draw=vert](-0.015,0.04)--(0.015,0.04)--(0.015,0.03)--(-0.015,0.03)--(-0.015,0.04);
	\end{tikzpicture}
	\caption{The starting zone $R_0$ in red, the intersection zones $S_q$ {respectively $P_n(S_q)$} for $0 \le q \le n-1$ in blue {resp. yellow} and the middle zone $S_{n, \text{glue}}$ in green for $n \in \N$ even (here $n=2$).
		}
	\label{fig:Ssets}
\end{figure}

\begin{remark}
    The time dependency is necessary for the stability between the stochastic and the Lagrangian flow. Otherwise, the {Gr\"onwall} type estimate \eqref{eq:gronwall} would have been heuristically 
    $$\int_{\bar{t}_q}^{\bar{t}_{q+1}} \|\nabla \bb_n (s,\cdot)\|_{L^\infty (B_{a_{q}^{1+\varepsilon}} (\gamma (s) ) )} ds \approx \| \nabla b_n \|_{L^\infty (S_{q})} \Delta t_{S_{q}} $$
    where $\Delta t_{S_{q}} $ is the time spent in the set $S_{q}$ which is approximately the width of the $q-$th pipe over the velocity of the $q+1$-th pipe $ \approx \frac{a_{q}}{v_{q+1}}$ and 
   assuming that $b_n$ is smoothed out with the width parameter of the pipes we have 
    $$  \| \nabla b_n \|_{L^\infty (S_{q})} \approx \frac{v_{q+1}}{a_{q+1}}$$
and therefore the estimate
$$\int_{\bar{t}_q}^{\bar{t}_{q+1}} \|\nabla \bb_n (s,\cdot)\|_{L^\infty (B_{a_{q}^{1+\varepsilon}} (\gamma (s) ) )} ds \approx \frac{a_{q}}{a_{q+1}} \,. $$

The error $\frac{a_q}{a_{q+1}}$ is too large to have a good control as in Lemma \ref{lem:loopstabback}. The time dependence in our case seems crucial and the non-uniqueness with a two dimensional autonomous velocity field remains open. 
\end{remark}

\begin{proof}[Proof of Proposition \ref{prop:basicprop}]
We start by proving \eqref{eq:prop:item1}.
    Let us check $\bb$ is in $L^\infty_tL^p_x$. We notice there is no spot where more than two loops overlap. { We also notice that $\| w_{n, \text{glue}} \|_{L^\infty_t L^p_x} \leq 10 \| w_{n+1} \|_{L^\infty_t L^p_x}$.} Therefore, we compute up to constants that do only depend on $p$ :
    \begin{align*}
        \sup_{n\ge0} \|\bb_n \|_{L^\infty([0,T],L^p(\T^2))} & \lesssim \sup_{t \in (0,1)} \sum_{k\ge 0} \int_{Q_k} | w_k (t, x)|^p dx
        \lesssim \sum_{k \ge 0} v_k^p \L^2(Q_k) 
        \\
        & \lesssim \sum_{k \ge 0} a_k^{-\alpha p}a_{k-1}a_k =  \sum_{k\ge -1} a_k^{2+ \delta -\alpha p(1+\delta)} < \infty
    \end{align*}
    where the last holds thanks to \eqref{hyp:ha}.
    Next, we prove that $\bb_n$ is a Cauchy sequence in  $L^1([0,1]\times\T^2)$. 
    From \eqref{d:T_n} we have 
    \begin{align*}
     T_{n+1} - T_n & \leq 2 t_{n+1, n} +  \frac{a_{n}}{a_{n+2}} \overline{\tau}_{n+2} + t_{\text{glue}, n+1} - t_{\text{glue}, n}
    \\
    & \lesssim \frac{a_{n}}{v_{n+1}} + \frac{a_{n}}{a_{n+2}} \frac{a_{n+{2}}}{v_{n+1}} + \frac{a_{n-1}}{v_{n+1}} + \frac{a_{n}}{v_{n+2}}    
    \\
    & \leq a_{n-1}^2\,.
    \end{align*}
   Then we compute by \eqref{d:symmetry:VF} and the fact that $T<1$ and that $w_k$ will switch on and of at most $1/\overline{\tau}_k$ times over the time interval $[0,1]$,
   \begin{align*}
   \| \bb_{n+1} - \bb_n\|_{L^1([0,T]\times \T^2)} &\lesssim \| w_{n, \text{glue}} \|_{L^1} +\| w_{n+1, \text{glue}} \|_{L^1}  + \| w_{n+1} \|_{L^1}  
   \\
   & \quad +  a_{n-2}^2  \sum_{k=1}^n \frac{1}{\overline{\tau}_k} \sup_{t \in [0,1]}  \int_{\R^2}  | w_{k} (t, x)  | dx 
   \\
   & \lesssim v_{n+1} a_{n+1} a_{n-1} + v_{n+2} a_{n+2} a_{n} + v_{n+1} a_{n+1} a_n
   + a_{n-2}^2 \sum_{k=1}^n \frac{v_{k-1}}{a_k} v_k  a_{k-1}  a_k\\
   & { \lesssim a_{n-2}^{(1+\delta) + (1+\delta)^3(1-\alpha)} + a_{n-2}^{2+(1+\delta) - \alpha (2-\delta)(1+\delta)}} \\
   & \leq  a_{n-2}^{1/2} 
      \end{align*}
      where in the last {passage} we have used 
      {
      \begin{equation*}
      	\begin{cases}
      		(1+\delta) + (1+\delta)^3(1-\alpha) > 1/2\,,\\
      		2 + (1+\delta) - \alpha(2-\delta)(1+\delta) > 1/2
      	\end{cases}
      \end{equation*}
      }
      {as} $\alpha < 11/10$ and $\delta \in (0, 1/10)$. Therefore, {$\{b_n\}_n$} is a Cauchy sequence in $L^1$.
It is straightforward from the definition that $\bb_n$ is divergence free, which implies that the $L^1$ limit $\bb$ is divergence free.

We now prove \eqref{eq:prop:item2}.  Fix $x \in S_{n, \text{glue}}$ and $t_{n, \beta} \in J_n$, and we use the shorthand notation
$$\overline{t} = t_{n, \beta}= \sum_{k=1}^n t_{k, k-1} +  \sum_{k=1}^{n+1} \beta_k \overline{\tau}_k + \frac{t_{\text{glue} , n}}{2} $$
for some $\beta_k$ in the range given by \eqref{eq:J-n},
and we define the sequence $\{\overline{t}_{q,\beta} \}_{q}$ with the shorthand notation cutting the $\beta$ as
\begin{equation} \label{eq:times-t_q}
\overline{t}_q = \overline{t}_{q, \beta} =  \sum_{k=1}^q t_{k, k-1}  + \sum_{k=1}^{q+1} \beta_k \overline{\tau}_k \,, \qquad \forall q \in \{ 0, 1, \ldots , n \}  
\end{equation} 
with the same set of $\beta_k$ for $k \leq q+1$ given by $\bar{t}$ and we set $t_{-1, \beta} \equiv 0$ for any $\beta$.
We consider the backward flow $\overline{\bX}^{n, \text{glue}}_{\overline{t}, \cdot }$ of $w_{n, \text{glue}}$ and  we have 
\begin{equation} \label{eq:proof:S_nglue}
x \in S_{n, \text{glue}} \left  [2 a_{n+1}^{1+ \varepsilon} \right ] \Rightarrow  \overline{\bX}^{n, \text{glue}}_{\overline{t}, \overline{t}_{n}} (x) \in  Q_{n} [3 a_{n}^{1+ \varepsilon}] \cap S_{n} \left  [ 2 a_{n+1}^{1+ \varepsilon} \right ] \,,
\end{equation}
thanks to the definition of $S_{n, \text{glue}}$ and $t_{\text{glue}, n}$, see \eqref{eq:prop:item2} and \eqref{eq:table}. Considering the backward flow $\bX_{\overline{t} , \cdot} (x)$ of $b_n$ we define 
$$ D_{n, \text{glue}}(x) = \{ s \in (\overline{t}_{n} ,\overline{t}):\bX_{\overline{t}, \overline{t} - s}^n (x) \in S_{n} \} \,,$$
and we have $|D_{n, \text{glue}}| \leq \frac{a_{n+1}^{1+ \varepsilon}}{v_n}$ for all $x \in S_n$. Indeed, $D_{n, \text{glue}}$ can be estimated as the least time needed to $w_{n, \text{glue}}$ to travel a length $a_{n}$ which is 
$$|D_{n, \text{glue}}| \leq \frac{a_n}{v_{n+1}} \leq  \frac{a_{n+1}^{1+ \varepsilon}}{v_n} $$
thanks to \eqref{hyp:ha}. Therefore, we have 
$$ |  \overline{\bX}^{n, \text{glue} }_{\overline{t}, \overline{t}_{n}} (x) - \bX^n_{\overline{t}, \overline{t}_{n}} (x)| \leq \int_{D_{n, \text{glue}}} \| w_{n} (s, \cdot )\|_{L^\infty} ds \leq a_{n+1}^{1+ \varepsilon} \,,$$
thanks to \eqref{eq:Linfty-w_q}. From this estimate and \eqref{eq:proof:S_nglue} we get 
$$x \in S_{n, \text{glue}} \left  [2 a_{n+1}^{1+ \varepsilon} \right ] \Rightarrow  {\bX}^{n}_{\overline{t}, \overline{t}_{n}} (x) \in  Q_{n} [2 a_{n}^{1+ \varepsilon}] \cap S_{n} \left  [  a_{n+1}^{1+ \varepsilon} \right ] \,.$$
 The proof of \eqref{eq:prop:item2} follows iterating this argument. 
 Let $\overline{\bX}^q_{\overline{t}_q , \cdot} (y)$ be the backward flow of $w_q$ starting at $y$ at time $\overline{t}_q$, then it satisfies
\begin{equation} \label{eq:proof:iterate-n}
 y \in Q_q [2 a_q^{1+ \varepsilon}] \cap S_q \left  [a_{q+1}^{1+ \varepsilon} \right ] \Rightarrow  \overline{\bX}^q_{\overline{t}_q, \overline{t}_{q-1}} (y) \in  Q_{q-1} [3 a_{q-1}^{1+ \varepsilon}] \cap S_{q-1} \left  [  2 a_{q}^{1+ \varepsilon} \right ] \,,
\end{equation}
for any $q \in \{1, \ldots, n \}$.  Indeed, using that $\overline{\tau}_{q-1} = M \overline{\tau}_{q}$ for some $M \in \N$ and that there exists $j \in \N$ for which we have $\overline{t}_q = t_q + j \overline{\tau}_q + \beta_{q+1} \overline{\tau}_{q+1}$, we get that $| \overline{t}_q -  j \overline{\tau}_q - t_{q-1}|  = t_{q, q-1} + \beta_{q+1} \overline{\tau}_{q+1} $ and thanks  to \eqref{d:w_q} and the bound 
$$ 4 a_{q-1}^{1+ \varepsilon}  \leq  \beta_{q+1} \overline{\tau}_{q+1} v_q \leq a_{q-1} - 4 a_{q-1}^{1+ \varepsilon} \,, $$
we have   $ \overline{\bX}^q_{\overline{t}_q, \overline{t}_{q-1}} (y) \in  Q_{q-1} [3 a_{q-1}^{1+ \varepsilon}]$. The  property $\overline{\bX}^q_{\overline{t}_q, \overline{t}_{q-1}} (y) \in  S_{q-1} [ 2 a_{q}^{1+ \varepsilon}]$ follows from $y \in Q_q [2 a_q^{1+ \varepsilon}]$ and the streamline property of preserving this distance to the boundary of $w_q$.
 We now want to prove that   
$$\bX^n_{\overline{t}, \overline{t}_q} (x) \in  Q_q [2 a_q^{1+ \varepsilon}] \cap S_q \left  [a_{q+1}^{1+ \varepsilon} \right ] \Rightarrow  \bX^n_{\overline{t}, \overline{t}_{q-1}} (x) \in  Q_{q-1} [2 a_{q-1}^{1+ \varepsilon}] \cap S_{q-1} \left  [a_{q}^{1+ \varepsilon} \right ] \,.$$

We study the subset  of times $D_q \subset (0,1)$ defined as 
$$ D_q = \{ s \in (\overline{t}_{q-1} ,\overline{t}_{q}):\bX_{\overline{t}, \overline{t}_{q} - s}^n (x) \in S_{q-1} \} \,,$$
and we notice that  it is enough to prove that $|D_q| \leq \frac{a_q^{1+ \varepsilon}}{ v_{q-1}}$, indeed from this we deduce 
\begin{equation} \label{eq:stability:X-overlineX}
 \sup_{t \in [\overline{t}_{q-1} , \overline{t}_{q}]} |  \overline{\bX}^q_{\overline{t}_q, t} (y) - \bX^n_{\overline{t}, t } (x)| \leq \int_{D_q} \| w_{q-1} (s, \cdot )\|_{L^\infty} ds \leq a_q^{1+ \varepsilon} \,,
\end{equation}
where we considered $y = \bX_{\overline{t}, \overline{t}_q}^n (x)$, which 
concludes the proof thanks to \eqref{eq:proof:iterate-n}.
The estimate on $D_q$ can be proved computing  the least time needed to $w_q$ travelling a length $a_{q-1}$, which is similar as above
$$ |D_q| \leq \frac{a_{q-1}}{v_q} \leq \frac{a_q^{1+ \varepsilon}}{ v_{q-1}}$$
where the last inequality follows from \eqref{hyp:ha}.
Finally, we notice that  $\overline{\bX}^q_{\overline{t}_q , \cdot} (y)$ the flow of $w_q$ for $t \in [\overline{t}_{q-1}, \overline{t}_q]$ satisfies
$$  y \in Q_q [2 a_q^{1+ \varepsilon}] \Rightarrow \overline{\bX}^q_{\overline{t}_q, t} (y) \in Q_q [2 a_q^{1+ \varepsilon}] $$
for any $t \in [\overline{t}_{q-1}, \overline{t}_q]$.  Therefore, the property $\bX^n_{\overline{t}, t} (x) \in Q_q [a_q^{1+ \varepsilon}]$ for any $t \in [\overline{t}_{q-1} , \overline{t}_{q}]$ can be proved similarly as before studying the difference between $\bX^n_{\overline{t}, t} (x) - \overline{\bX}^n_{\overline{t}_q, t} (y) $ choosing $y = \bX^n_{\overline{t}, \overline{t}_q} (x)$. 

To prove \eqref{eq:prop:item3} we define
$$ \overline{s}_q = \overline{s}_{q, \beta} = \overline{t} + \sum_{k=q}^n t_{k, k-1} + \sum_{k=q+1}^{n+1}  \left ( \frac{a_{k-2}}{a_{k}} - \beta_k \right ) \overline{\tau}_k + \frac{t_{\text{glue}, n}}{2}$$
where $\overline{t} \in J_n$ and $\beta_k$ are the corresponding values of $\overline{t}$ and we set $s_{-1, \beta} \equiv T$ for any $\beta$. Notice that $\overline{s}_{q+1, \beta} \leq \overline{s}_{q, \beta}  $ for any $q$. By \eqref{d:T_n} we observe that 
$$ T_n - \overline{s}_q - s= \sum_{k=1}^{q-1} t_{k, k-1} + \sum_{k=1}^{q} \left ( \frac{a_{k-2}}{a_{k}} - \beta_k \right ) \overline{\tau}_k - s$$
for any $s \geq 0$ and $q \leq n$ and observing that the previous can be rewritten as 
$$ T_n - \overline{s}_q - s = \sum_{k=1}^{q-1} t_{k, k-1} + \sum_{k=1}^{q} \tilde{\beta}_k \overline{\tau}_k - s $$
for some $\tilde{\beta}_k$ with $4 \frac{a_{k-2}^{1+ \varepsilon}}{a_{k}} \leq \tilde{\beta}_{k} \leq  \frac{a_{k-2}}{a_{k}} - 4 \frac{a_{k-2}^{1+\varepsilon}}{a_{k}}$, we can repeat the previous proof substituting $\overline{t}_q$ with $\overline{s}_q$ using the symmetry of the velocity field \eqref{d:symmetry:VF}.
%

Now we prove \eqref{prop:item:nablab}. Thanks to the streamline property of $w_q$ we have that 
$$ y \in Q_q [2 a_q^{1+ \varepsilon}] \Rightarrow  \overline{\bX}^q_{\overline{t}_q, t} (y) \in  Q_{q} [2 a_{q}^{1+ \varepsilon}] \qquad \text{for any } t \in [\overline{t}_{q-1}, \overline{t}_q] \,.$$
 {Then} thanks to \eqref{eq:stability:X-overlineX} we have that 
$\bX^n_{\overline{t}, t} (x) \in Q_q [a_q^{1+ \varepsilon}]$  for any $t \in [\overline{t}_{q-1}, \overline{t}_{q}]$ and for any $q$. Therefore, 
 thanks  to  
Lemma \ref{lemma:building-block} and \eqref{eq:stability:X-overlineX} we have
$$\int_{\overline{t}_{q-1}}^{\overline{t}_q} \| \nabla w_q  \|_{L^\infty (B_{a_q^{1+ \varepsilon}} (\gamma(s)))} ds \leq \int_{\overline{t}_{q-1}}^{\overline{t}_q} \| \nabla w_q  \|_{L^\infty (B_{2 a_q^{1+ \varepsilon}} (\overline{\gamma}(s)))} ds \leq 
 C\,,$$
  where we used the notation $\gamma(s) = \bX^n_{\overline{t}, s} (x)$ and $\overline{\gamma} (s) =  \overline{\bX}^q_{\overline{t}_q, s} (y) $ with $y =  \bX^n_{\overline{t}, \overline{t}_q} (x)$. This concludes the proof, since the constant $C>0$ is given in Lemma \ref{lemma:building-block} and is independent on all the parameters.
\end{proof}

\section{Proof of Theorem \ref{thm:NU_ADE_loops}}

{Let $\solTE_n$ and $\solAD_n$ denote the solution to \eqref{eq:TE}, respectively \eqref{eq:ADE} with velocity field $b_n$.}
To show that there are at least two different solutions of \eqref{eq:ADE} with $b$, 
and we show that the sequence $\theta_n$ is close (in a suitable sense) to $\rho_n$ and using that $\{ \rho_{2n} \}_{n \in \N}$ and $\{ \rho_{2n +1} \}_{n \in \N}$ have two different limit points in the weak* topology we deduce that $\{ \theta_{2n} \}_{n \in \N}$ and $\{ \theta_{2n +1} \}_{n \in \N}$ also have. The property of being a parabolic solution follows from the weak* lower semi-continuity of the norms {and the uniform boundedness of the sequence $\{\solAD_n \}_n$ in $L^1$, as will be shown in the proof of Theorem \ref{thm:NU_ADE_loops}}.

More precisely, to show that the solution of the \eqref{eq:ADE} is close to the solution of the \eqref{eq:TE} with the regularized velocity field $b_n$ it is enough
 to show that the backward (or forward) stochastic flow is close to the backward (or forward) regular Lagrangian  flow. It is possible to prove this closeness result between the regular Lagrangian flow and the stochastic flow when the flows go from small scales to big scales, but it is not clear whether a similar closeness result can be proved when the flows go from large scales to small scales.
Therefore, in Section \ref{sec:stability} we prove that the backward and the forward regular Lagrangian flows are close to the backward and forward stochastic  flows respectively, starting from any $x \in S_{n+1} [a_{n+1}^{1+ \varepsilon}]$, i.e. the smallest pipe of $b_n$ (corresponding to the smallest scale of the velocity field). 
In Section \ref{sec:distinct} we use the previous closeness result of the flows to prove quantitative ``separation of supports'' (see $(iv)$ in Lemma \ref{lem:claims}) of $\{ \theta_{2n} \}_{n \in \N}$ and $\{ \theta_{2n +1} \}_{n \in \N}$. 
Finally, in Section \ref{sec:concluding-proof} we use all the previous properties to conclude the proof.

\subsection{Stability between the Stochastic and the Lagrangian flow} \label{sec:stability}

%

\begin{lem}[Backward stability]\label{lem:loopstabback}
For any $n \in \N$ and $\overline{t}\in (0, T)$, let  $\bX^n_{\bar{t}, \cdot }$ and $\bY^n_{\bar{t}, \cdot }$ be the backward Lagrangian flow and the backward stochastic flow  of $\bb_n$ and recall the set $S_{n, \text{glue}}$ defined in \eqref{eq:prop:item2} of Proposition \ref{prop:basicprop}. Then, there exists $\Omega_{n} \subset \Omega$ such that 
$\mathbb{P} (\Omega_{n}) \geq 1 - a_0^{1+ \varepsilon}$ with the following property.  For any $t_{n, \beta} \in J_n$ defined in \eqref{eq:J-n} and for any $\omega \in \Omega_{n}$ and $x \in S_{n, \text{glue}} [ a_{n+1}^{1+ \varepsilon}]  $ we have 
$$ | \bX^n_{{t}_{n, \beta},t}(x)-\bY^n_{{t}_{n, \beta},t}(x,\omega) | \leq a_{q}^{1+ \varepsilon} \qquad \forall t \in [\bar{t}_{q-1, \beta}, \bar{t}_{q, \beta}]$$
where $\{ \bar{t}_{q, \beta} \}_{q}$ is the sequence given in \eqref{eq:prop:item2} of Proposition \ref{prop:basicprop}.
In particular, the following holds
 \begin{equation*}
        \sup_{t \in [0,{t}_{n, \beta}]}  \sup_{x \in S_{n, glue} [ a_{n+1}^{1+ \varepsilon}] } \E \big[ |\bX^n_{{t}_{n, \beta},t}(x)-\bY^n_{{t}_{n, \beta},t}(x,\omega)| \big] \le 2 a_0^{1+\varepsilon}  \,.
    \end{equation*}
\end{lem}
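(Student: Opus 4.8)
The plan is to use the Gr\"onwall estimate \eqref{eq:Gronwall} localized along the backward Lagrangian trajectory, combined with the curvewise control on $\|\nabla b_n\|$ provided by \eqref{prop:item:nablab} of Proposition \ref{prop:basicprop} and a Doob-type bound on the Brownian increments. First I would fix $n$, fix $t_{n,\beta}\in J_n$, and fix a starting point $x\in S_{n,\text{glue}}[a_{n+1}^{1+\varepsilon}]$. By \eqref{eq:prop:item2} of Proposition \ref{prop:basicprop} the backward Lagrangian flow $\bX^n_{t_{n,\beta},\cdot}(x)$ passes through the sets $Q_q[2a_q^{1+\varepsilon}]\cap S_q[a_{q+1}^{1+\varepsilon}]$ at the times $\bar t_{q,\beta}$, travelling from small scales (near the origin) to large scales (towards $R_0$), with $|\bar t_{q+1,\beta}-\bar t_{q,\beta}|\le 6a_q/v_{q+1}$. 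The idea is to prove the estimate $|\bX^n_{t_{n,\beta},t}(x)-\bY^n_{t_{n,\beta},t}(x,\omega)|\le a_q^{1+\varepsilon}$ inductively going backwards from $q=n$ down through the pipes, on a good event $\Omega_n$ where the Brownian motion increments over each relevant time subinterval are small.

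The key steps, in order: (1) Define the good event $\Omega_n$ as the intersection over $q\in\{0,\dots,n\}$ of the events that $\sup$ of the relevant translated Brownian increment over $[\bar t_{q-1,\beta},\bar t_{q,\beta}]$ (and over all $\beta$) is at most some small fraction of $a_{q+1}^{1+\varepsilon}$, say $\le a_{q+1}^{1+\varepsilon}/(100 e^{C})$ with $C$ the constant from \eqref{eq:gronwall}. Using Doob's inequality \eqref{eq:translatedDoob} with $b-a\le 6a_q/v_{q+1}=6a_q\cdot a_{q+1}^{\alpha}$ and threshold $\sim a_{q+1}^{1+\varepsilon}$, the failure probability of each event is at most $2\exp(-c\, a_{q+1}^{2+2\varepsilon}/(a_q a_{q+1}^{\alpha}))$; since $a_{q+1}=a_q^{1+\delta}$ this exponent is a negative power of $a_q$ that is super-exponentially large in $q$, so the union bound over $q$ and over the (at most $a_0^2/(a_na_{n+1})$, by \eqref{eq:cardinality}) choices of $\beta$ still leaves $\mathbb{P}(\Omega_n)\ge 1-a_0^{1+\varepsilon}$ once $a_0$ is small — here one checks $\varepsilon<\delta^2/8$ and \eqref{hyp:ha} are exactly what make the combinatorial factor negligible. (2) On $\Omega_n$, run the backward induction: the base of the induction at $q=n$ uses $|x-x|=0$ and Gr\"onwall over $[\bar t_{n-1,\beta},\bar t_{n,\beta}]$. (3) For the inductive step from $q+1$ to $q$, apply \eqref{eq:Gronwall} on the interval $[\bar t_{q-1,\beta},\bar t_{q,\beta}]$ with initial separation $\le a_{q+1}^{1+\varepsilon}$ at time $\bar t_{q,\beta}$; crucially, since both flows stay in a ball of radius $a_q^{1+\varepsilon}$ around the trajectory (using the inductive bound to keep $\bY^n$ close and \eqref{eq:prop:item2} to locate $\bX^n$), only $\nabla w_q$ contributes — $\nabla w_{q'}$ for $q'\ge q+1$ vanishes on this ball because that pipe is either off in this time window or geometrically disjoint — and \eqref{eq:gronwall} gives $\int\|\nabla w_q\|_{L^\infty(B_{a_q^{1+\varepsilon}}(\gamma))}\,ds\le C$. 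Thus $|\bX^n-\bY^n|\le (a_{q+1}^{1+\varepsilon}+\sqrt2\cdot \text{(Brownian term)})e^{C}\le a_q^{1+\varepsilon}$ by the choice of $\Omega_n$ (using $a_{q+1}^{1+\varepsilon}=a_q^{(1+\delta)(1+\varepsilon)}\ll a_q^{1+\varepsilon}$). (4) Conclude the pointwise bound for all $t\in[\bar t_{q-1,\beta},\bar t_{q,\beta}]$, and finally integrate: $\mathbb{E}[|\bX^n-\bY^n|]\le a_0^{1+\varepsilon}\mathbb{P}(\Omega_n)+\operatorname{diam}(\T^2)\mathbb{P}(\Omega_n^c)\le 2a_0^{1+\varepsilon}$, using $\operatorname{diam}(\T^2)\lesssim 1$ and $\mathbb{P}(\Omega_n^c)\le a_0^{1+\varepsilon}$.

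The main obstacle I expect is step (3): carefully justifying that along the whole interval $[\bar t_{q-1,\beta},\bar t_{q,\beta}]$ the stochastic trajectory $\bY^n$ does not wander out of the thin ball $B_{a_q^{1+\varepsilon}}$ around $\bX^n$ — this is what lets us replace $\|\nabla b_n\|_{L^\infty(\T^2)}$ (which is enormous, of order $v_{q}/a_{q}$ or worse for the smaller active pipes) by the localized curvewise integral $\int\|\nabla w_q\|_{L^\infty(B_{a_q^{1+\varepsilon}}(\gamma))}\,ds\le C$ of \eqref{eq:gronwall}. This requires a bootstrap/continuity argument: one argues that as long as $\bY^n$ stays within $B_{a_q^{1+\varepsilon}}(\bX^n)$ the Gr\"onwall bound applies and keeps the separation $\le a_q^{1+\varepsilon}$ strictly (in fact $\le a_{q+1}^{(1+\varepsilon)}e^C \ll a_q^{1+\varepsilon}$), so the exit time from the ball cannot occur, closing the loop. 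The super-exponential scale separation $a_{q+1}=a_q^{1+\delta}$ is exactly what gives the necessary room, and the time-intermittency built into $w_q$ via \eqref{d:w_q} is what guarantees the smaller pipes $w_{q'}$, $q'>q$, are switched off during the window in which the trajectory traverses $Q_q$, so their huge gradients never enter the estimate.
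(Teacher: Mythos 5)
Your overall strategy is the same as the paper's: a good event built from Doob's inequality and a union bound over scales and multi-indices, a backward induction pipe by pipe from the gluing region towards $R_0$, a localized Gr\"onwall estimate using the curvewise bound \eqref{eq:gronwall} of Proposition \ref{prop:basicprop}, a stopping-time/bootstrap argument to justify the localization, and the final expectation bound by splitting over $\Omega_n$ and its complement. However, two points as written would not go through. First, the union bound: you multiply, at \emph{every} scale $q$, by the full count $\# B_n\sim a_0^2/(a_n a_{n+1})$. Since the per-event Doob probability at a fixed small scale (say $q=0$) is a constant independent of $n$, the product $\#B_n\cdot\exp(-c\,a_0^{-\mu})$ diverges as $n\to\infty$, so your bound is not uniform in $n$. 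The fix is to observe that the event attached to scale $q$ depends only on the truncated multi-index $(\beta_1,\dots,\beta_{q+1})$, so the number of \emph{distinct} events at scale $q$ is $\#B_q\le a_{q+1}^{-2}$; this is exactly how the paper defines $\Omega_n$ (intersecting over $\beta\in B_q$, with the enlarged times $\bar t_{q,\beta,\mathrm{max}}$ so that one event covers all continuations of a given truncation), and then the sum over $q$ of $\#B_q$ times the superexponentially small Doob tail closes with $\le a_0^{1+\varepsilon}$. Relatedly, your pairing of threshold and interval is index-shifted: the interval $[\bar t_{q-1,\beta},\bar t_{q,\beta}]$ has length $\le 6a_{q-1}/v_q$, not $6a_q/v_{q+1}$; with the correct length the affordable threshold under \eqref{hyp:ha}, \eqref{hyp:epsilon} is a fraction of $a_q^{1+\varepsilon}$ (as in the paper), not $a_{q+1}^{1+\varepsilon}$, and a fraction of $a_q^{1+\varepsilon}$ is all the Gr\"onwall step on that interval needs.

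Second, in the inductive step your claim that ``only $\nabla w_q$ contributes'' is not correct: during $[\bar t_{q-1,\beta},\bar t_{q,\beta}]$ the trajectory ends in the overlap $S_{q-1}\subset Q_{q-1}$, where the \emph{larger} neighbouring pipe $w_{q-1}$ is switched on, and a ball of radius $a_q^{1+\varepsilon}$ around the trajectory can straddle the boundary of $\supp(w_{q-1})$, where $w_{q-1}$ is only $BV$ and no pointwise gradient bound is available (the intermittency in \eqref{d:w_q} switches off the \emph{smaller} pipes, not the larger one). The paper handles this term additively, not through Gr\"onwall: the whole interval is so short that $\vert\bar t_{q,\beta}-\bar t_{q-1,\beta}\vert\,v_{q-1}\le 6\,(a_{q-1}/v_q)\,v_{q-1}\le a_q^{1+\varepsilon}/(4K)$ by \eqref{hyp:epsilon}, and this error is absorbed before applying Gr\"onwall to the $w_q$ part only; an analogous additive bound is needed for $w_n$ versus $w_{n,\text{glue}}$ on the initial stretch $[\bar t_{n,\beta},t_{n,\beta}]$, which your base case at $q=n$ skips. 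These are repairable with estimates you already invoke, but as stated the Gr\"onwall inequality in your step (3) is not justified.
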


\begin{proof}

The proof is by iteration and for convenience we abuse the notation and set $S_{n+1} = S_{n, glue}$. Firstly, we define the set $\Omega_n \subset \Omega$. We recall the set $B_n$ for any $n \in \N$ of multi-indices given in \eqref{eq:J-n} and  we define for any $q \leq n$
and we slightly abuse the notation with $\beta = \pi_i (\beta) \in B_i$ for any $\beta \in B_j$ with $i \leq j$ where $\pi_i (\beta) = (\beta_1, \ldots, \beta_{i+1}) $ for any $\beta = (\beta_1, \ldots, \beta_{j+1}) \in B_j$. Finally, as in Proposition \ref{prop:basicprop} for any ${t}_{n, \beta} \in J_n$  there exists $\{ \overline{t}_{q, \beta} \}_{q}$ and we define  
\begin{align} \label{d:t-beta-max}
\bar{t}_{q, \beta, \text{max}} = \bar{t}_{q, \beta} + \sum_{k = q+2}^{n+1}  \frac{a_{k-2}}{a_{k}} \bar{\tau}_{k} = \sum_{k=1}^q t_{k, k-1}  + \sum_{k=1}^{q+1} \beta_k \overline{\tau}_k  + \sum_{k = q+2}^{n+1}  \frac{a_{k-2}}{a_{k}} \bar{\tau}_{k}   \,,
\end{align}
satisfying 
\begin{align} \label{eq:bound-difference-time}
| \bar{t}_{q, \beta, \text{max}} - \bar{t}_{q-1, \beta}  | \leq  |\bar{t}_{q, \beta} - \bar{t}_{q-1, \beta}| +\sum_{k= q+2}^{n+1}  \frac{a_{k-2}}{a_{k}} \bar{\tau}_{k}  \leq 6\frac{a_{q-1}}{v_{q}} + \sum_{k= q+2}^{n+1}  \frac{a_{k-2}}{a_{k}} \frac{a_k}{v_{k-1}} \leq  8 \frac{a_{q-1}}{v_q} \,,
\end{align} 
where we used \eqref{eq:sum-a_k}, \eqref{d:v-q} with the fact that $\alpha >0$ together with \eqref{d:a-q}. Furthermore, we have $|t_{n, \beta} - \bar{t}_{n, \beta}| \leq 8 \frac{ a_{n}}{v_{n+1}}$, from the definition of $t_{n, \text{glue}}$ \eqref{eq:table}.

Finally, we set 
\begin{equation}\label{eq:goodproba}
\Omega_n  = \bigcap_{q =0}^{n+1} \bigcap_{\beta \in B_q} \left  \{\omega \in \Omega :  \, \sup_{t\in [\bar{t}_{q-1, \beta}, \bar{t}_{q, \beta, \text{max}}]} \sqrt{2} \vert \bW_t - \bW_{\bar{t}_{q}}\vert < \frac{a_{q}^{1+\varepsilon}}{4K} \right \}  \,,
\end{equation}
with the notation $\overline{t}_{n+1, \beta , \text{max}} = t_{n, \beta}$ and  $K = \exp (C)$ and $C>0$ is the constant given in \eqref{eq:gronwall}. 
 By Doob inequality, \eqref{eq:cardinality} and \eqref{eq:bound-difference-time} we can estimate 
\begin{align*}
\mathbb{P} (\Omega_{n}^c) & \leq \sum_{q =0}^{n+1} \sum_{\beta \in B_q} \exp \left (- \frac{a_q^{2+ 2 \varepsilon}}{16 K^2 (\bar{t}_{q,\beta , \text{max}} - \bar{t}_{q-1 , \beta})} \right ) 
\\
& \leq  \sum_{q=0}^{n+1} a_{q+1}^{-2}   \exp \left (- \frac{a_q^{2+ 2 \varepsilon} v_q}{128 K^2 a_{q-1} } \right )  
{= \sum_{q=0}^{n+1} a_{q+1}^{-2}   \exp \left (- \frac{a_{q-1}^{(2+2\varepsilon)(1+\delta)-\alpha(1+\delta)-1}}{128 K^2} \right )}\\
& \leq {C\sum_{q =0 }^{n+1} a_{q+1}^{-2} a_{q+1}^{3 + 2\varepsilon} 
=  C}\sum_{q =0 }^{n+1} a_{q+1}^{1+ {2}\varepsilon} 
{\leq C a_0 \sum_{ q \ge 0} a_0^{(1 + 2\varepsilon)(1+ \delta)^q} 
\leq C a_0^{2+ \varepsilon} \sum_{ q \ge 0} a_0^{\varepsilon(1+ \delta)^q}  }\\
& \leq  \frac{a_0^{1+ \varepsilon}}{2} \,. \numberthis \label{eq:POmega_n_c}
\end{align*}
{In} the last inequality we used \eqref{eq:sum-a_k}{. Going from the second to the third line,} we used
  $\exp (-x) \leq \frac{k!}{x^k}$ and with $k \in \N $ {large enough} so that
  $$ k ( \alpha  (1 +\delta)  + 1- (2+ 2 \varepsilon ) (1+ \delta)) > (3+ 2 \varepsilon) (1+ \delta)^2{.}$$
  {This} inequality is possible thanks to \eqref{hyp:ha} and  \eqref{hyp:epsilon} which imply $\alpha  (1 +\delta)  + 1- (2+ 2 \varepsilon ) (1+ \delta) >0$ and we used $a_0$ {small enough} to reabsorb the constant ${4C = 4 k!(128 K^2)^k}$.

From now on in the proof, we use 
the shorthand notation 
$$\bar{t}_{q} = \bar{t}_{q, \beta} \,, \quad \text{ and } \quad  \bar{t}_{q, \text{max}} = \bar{t}_{q, \beta, \text{max}}\,. $$
Let us fix $\overline{t}= t_{n, \beta} \in J_n$ and fix use the short hand notation $\overline{t}_q = \overline{t}_{q, \beta}$ for the sequence
$\{ \overline{t}_{q, \beta} \}_{q =1}^n$ given by \eqref{eq:prop:item2} of Proposition \ref{prop:basicprop} and set $\overline{t}_{n+1} = \overline{t}$ and $\overline{t}_0 = 0$.  We claim that  
$$| \bX^n_{\bar{t}, \bar{t}_q} (x) - \bY^n_{\bar{t}, \bar{t}_q}(x, \omega) | \leq a_{q+1}^{1+ \varepsilon}\,, \quad \forall \omega \in {\Omega}_{\bar{t}} \,,$$
for any $x \in S_{n+1}[ a_{n+1}^{1+\varepsilon}]$  and  $q \leq n$,
where 
\begin{equation*}
        \Omega_{\overline{t}} = \left \{\omega \in \Omega : \forall \quad 0 \le q \le n+1, \quad \sup_{t\in [\bar{t}_{q-1},\bar{t}_{q}]} \sqrt{2} \vert \bW_t - \bW_{\bar{t}_{q}}\vert < \frac{a_{q}^{1+\varepsilon}}{4K} \right \} \supset \Omega_n \,,
    \end{equation*}
    and the last inclusion holds from the definitions of \eqref{d:t-beta-max} and \eqref{eq:goodproba}. 
Suppose the property is true up to some $q \geq 1$, then we want to prove the property for $q-1$. 
We define 
$$\tau (\omega) = \sup \{ \bar{t}_{q-1} \le  s \le \bar{t}_q : | \bX^n_{\bar{t},s} (x) - \bY^n_{\bar{t},s}  (x, \omega) | >  {a_{q}^{1+ \varepsilon}} \} \le \bar{t}_q \,.$$
The proof follows by proving that $\tau(\omega) \le \overline{t}_{q-1}$ for any $\omega \in {\Omega}_{\overline{t}} $. Suppose $\tau(\omega) >\overline{ t}_{q-1}$. As $a_{q+1}^{1+\varepsilon} < a_q^{1+\varepsilon}/4K$, we have that 
\begin{align*}
| \bY^n_{\bar{t}, \tau } (x, \omega) - \bX^n_{\bar{t} , \tau} (x) | & \leq
| \bY^n_{\bar{t}, \bar{t}_q} (x, \omega) - \bX^n_{\bar{t} , \bar{t}_q} (x) | + \left | \int_{\tau(\omega)}^{\bar{t}_q} \bb_n (\bY^n_{\bar{t}, s} (x, \omega)) - \bb_n (\bX^n_{\bar{t}, s} (x)) ds \right | 
\\
& \quad +   \sqrt{2} \sup_{t \in [\bar{t}_{q-1},\bar{t}_q]} |\bW_t (\omega) - \bW_{\bar{t}_q}(\omega)|
\\
& \leq  \frac{2 a_{q}^{1+ \varepsilon} }{4K} +
  \int_{\tau(\omega)}^{t_{q}} \| \nabla w_{q} \|_{L^\infty (B_{a_{q}^{1+ \varepsilon}} (\gamma (s)) )} | \bY^n_{\bar{t}, s } (x, \omega) - \bX^n_{\bar{t}, s} (x) | ds 
  \\
  & \quad + (\overline{t}_{q} - \overline{t}_{q-1} ) v_{q-1}   
  \\
  & \leq \frac{3a_q^{1+ \varepsilon}}{4K} + \int_{\tau(\omega)}^{t_{q}} \| \nabla w_{q} \|_{L^\infty (B_{a_{q}^{1+ \varepsilon}} (\gamma (s)) )} | \bY^n_{\bar{t}, s } (x, \omega) - \bX^n_{\bar{t}, s} (x) | ds  \,,
\end{align*}
for any $\omega \in \Omega_{\bar{t}}$, where in the last we used $ (\overline{t}_{q} - \overline{t}_{q-1} ) v_{q-1}   \leq 6 \frac{a_{q-1}}{v_q} v_{q-1} \leq  \frac{a_q^{1 + \varepsilon}}{4K} $ thanks to property \ref{eq:prop:item2} of Proposition \ref{prop:basicprop} and \eqref{hyp:epsilon}  and  we used the notation $\gamma (s ) =\bX^n_{\bar{t} , s} $. By {Gr\"onwall's} lemma and property  \ref{prop:item:nablab} of Proposition \ref{prop:basicprop} we have 
\begin{align*}
| \bY^n_{\bar{t}, \tau } (x, \omega) - \bX^n_{\bar{t} , \tau} (x) | \leq \frac{3a_{q}^{1 + \varepsilon}}{4} 
\end{align*}
contradicting $\tau (\omega) > \overline{t}_{q-1}$. 
Finally, {from} this property and the boundedness of the trajectories we get the thesis.
\end{proof}

\begin{remark}\label{rmk:large_small}
The same proof does not hold from large scales to small scales, more precisely it is not possible to fix $x \in S_0$ and prove that for the forward flow $\bX_{0,  \cdot}^n$ the following holds
$$| \bX^n_{0, \bar{t}_q} (x) - \bY^n_{0, \bar{t}_q}(x, \omega) | \leq \frac{a_q^{1+ \varepsilon}}{2 K} \,, \quad \forall \omega \in {\Omega}_{\overline{t}} \,,$$
since already in the 0-th pipe $Q_0$ the error of the Brownian motion can be bounded just by $\frac{a_0^{1 + \varepsilon}}{4K} \gg a_q^{1+ \varepsilon}$. Going from small scales to large scales is better since the error made by the Brownian motion in the small scales is negligible in the large scales.
\end{remark}

Similarly, using property \eqref{eq:prop:item3} of Proposition \ref{prop:basicprop} and replacing $\Omega_{n}$ in the proof with  
    \begin{equation*}
        \tilde{\Omega}_{n}= \bigcap_{q =0}^{n+1} \bigcap_{\beta \in B_q} \left  \{\omega \in \Omega : \, \sup_{s \in [\bar{s}_{q, \beta, \text{min}},\bar{s}_{q-1, \beta}]} \sqrt{2} \vert \bW_s - \bW_{\overline{s}_{q}}\vert < \frac{a_q^{1+\varepsilon}}{4K} \right  \},
    \end{equation*} 
      where $\{ \bar{s}_{ q, \beta} \}_q$ is the one given  in property \eqref{eq:prop:item3} of Proposition \ref{prop:basicprop}, with the notation $\bar{s}_{n+1, \beta, \text{min}} = t_{n, \beta}$ and 
   $$\bar{s}_{q, \beta, \text{min}} = t_{n, \beta} + \sum_{k=q+1}^n t_{k, k-1} + \sum_{k=q+2}^n  \left ( \frac{a_{k-2}}{a_{k}} - \beta_k \right ) \overline{\tau}_k + \frac{t_{\text{glue}, n}}{2}$$ 
    and  replacing $\Omega_{\overline{t}}$ with 
\begin{equation*}
        \Omega_{\overline{s}} = \left  \{\omega \in \Omega : \forall \quad 0 \le q \le n+1, \quad \sup_{s\in [\bar{s}_{q, \beta},\bar{s}_{q-1, \beta}]} \sqrt{2} \vert \bW_s - \bW_{\bar{s}_{q}}\vert < \frac{a_{q}^{1+\varepsilon}}{4K} \right \} \supset \Omega_n \,,
    \end{equation*}
     then
one can prove the following.
\begin{lem}(Forward stability)\label{lem:loopstabfor}
    For any $n \in \N$, $\bar{t} \in {[0,T]}$, let $\bX^n_{\bar{t}, \cdot}$ and $\bY^n_{\bar{t}, \cdot }$ be the forward Lagrangian flow and the forward stochastic flow  of $\bb_n$ and recall the set $S_{n, \text{ glue}}$ defined in \eqref{eq:prop:item2} of Proposition \ref{prop:basicprop}. 
    Then, there exists $\tilde{\Omega}_{n} \subset \Omega$ such that 
$\mathbb{P} (\tilde{\Omega}_{n}) \geq 1 - a_0^{1+ \varepsilon}$ with the following property. For any $t_{n, \beta} \in J_n$ and  $\omega \in \tilde{\Omega}_{n}$ and $x \in S_{n, glue} [ a_{n+1}^{1+ \varepsilon}]  $ we have 
$$ | \bX^n_{{t}_{n, \beta},s}(x)-\bY^n_{{t}_{n, \beta},s}(x,\omega) | \leq a_{q}^{1+ \varepsilon} \qquad \forall s \in [\bar{s}_{q, \beta}, \bar{s}_{q-1, \beta}]$$
where $\{ \bar{s}_{q, \beta} \}_{q}$ is the sequence given in \eqref{eq:prop:item3} of Proposition \ref{prop:basicprop}.
 In particular, the following holds
 \begin{equation*}
        \sup_{s \in [t_{n, \beta}, T_n]}  \sup_{x \in S_{n, glue} [ a_{n+1}^{1+ \varepsilon}] } \E \big[ |\bX^n_{\bar{t},s}(x)-\bY^n_{\bar{t},s}(x,\omega)| \big] \le  a_0^{1+\varepsilon}  \,.
    \end{equation*}
    
\end{lem}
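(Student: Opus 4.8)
The plan is to transcribe the proof of Lemma \ref{lem:loopstabback} verbatim, with the single change that time now runs \emph{forward} from $t_{n,\beta}$, so that the two flows traverse the pipes in order of \emph{increasing} scale, i.e.\ through $P_n(Q_n), P_n(Q_{n-1}), \dots, P_n(Q_0)$, where $P_n$ is the reflection of \eqref{eq:prop:item3}. By the four--fold symmetry \eqref{d:symmetry:VF}, on each reflected pipe $P_n(Q_q)$ the field $\bb_n$ coincides, up to $P_n$, a sign and the time reversal $t \mapsto T_n - t$, with the building block $w_q$, so Lemma \ref{lemma:building-block} and all of Proposition \ref{prop:basicprop} apply with $Q_q$, $S_q$ replaced by $P_n(Q_q)$, $P_n(S_q)$; as in Lemma \ref{lem:loopstabback} we abuse notation and let the gluing pipe $w_{n,\text{glue}}$ — which is itself a building block in the sense of Lemma \ref{lemma:building-block} — play the role of ``$w_{n+1}$'', setting $S_{n+1} = S_{n,\text{glue}}$, $\bar{s}_{n+1,\beta} = t_{n,\beta}$ and $\bar{s}_{-1,\beta} = T$.

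First I would fix $\tilde{\Omega}_n$ and $\Omega_{\bar s}$ as in the paragraph preceding the lemma, with $K = \exp(C)$ and $C$ the constant of \eqref{eq:gronwall}, and show $\mathbb{P}(\tilde{\Omega}_n^c) \le \tfrac12 a_0^{1+\varepsilon}$. This is exactly the computation \eqref{eq:POmega_n_c}: Doob's inequality \eqref{eq:translatedDoob} and the cardinality bound \eqref{eq:cardinality} give
\[
\mathbb{P}(\tilde{\Omega}_n^c) \le \sum_{q=0}^{n+1}\sum_{\beta \in B_q} 2\exp\!\left(-\frac{a_q^{2+2\varepsilon}}{16 K^2\,|\bar{s}_{q-1,\beta} - \bar{s}_{q,\beta,\mathrm{min}}|}\right),
\]
and, just as $|\bar{t}_{q,\beta,\mathrm{max}} - \bar{t}_{q-1,\beta}| \le 8 a_{q-1}/v_q$ in \eqref{eq:bound-difference-time}, the length $|\bar{s}_{q-1,\beta} - \bar{s}_{q,\beta,\mathrm{min}}|$ is bounded by a multiple of $a_{q-2}/v_{q-1}$ using \eqref{eq:sum-a_k} and the super--exponential decay \eqref{d:a-q}; the resulting power of $a_{q-2}$ inside the exponential is negative for $\delta,\varepsilon$ small by \eqref{hyp:ha}, so dominating the polynomial factor $\#(B_q) \le a_0^2/(a_q a_{q+1})$ via $\exp(-x) \le k!/x^k$ with $k$ large and summing through \eqref{eq:sum-a_k} reabsorbs everything into the smallness of $a_0$. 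Since $[\bar{s}_{q,\beta},\bar{s}_{q-1,\beta}] \subset [\bar{s}_{q,\beta,\mathrm{min}},\bar{s}_{q-1,\beta}]$ we get $\tilde{\Omega}_n \subset \Omega_{\bar s}$, hence $\mathbb{P}(\tilde{\Omega}_n) \ge 1 - a_0^{1+\varepsilon}$.

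The core step is the downward iteration in $q$. Fix $t_{n,\beta} \in J_n$, $x \in S_{n,\text{glue}}[a_{n+1}^{1+\varepsilon}]$, $\omega \in \Omega_{\bar s}$, and abbreviate $\gamma(s) = \bX^n_{t_{n,\beta},s}(x)$. Using $\{\bar{s}_{q,\beta}\}_q$ from \eqref{eq:prop:item3} I claim $|\bX^n_{t_{n,\beta},\bar{s}_q}(x) - \bY^n_{t_{n,\beta},\bar{s}_q}(x,\omega)| \le a_{q+1}^{1+\varepsilon}$ for $0 \le q \le n$, the case $q = n+1$ being trivial since both flows equal $x$ at $t_{n,\beta}$. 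Assuming the bound at $\bar{s}_{q,\beta}$, set $\tau(\omega) = \inf\{s \in [\bar{s}_{q,\beta},\bar{s}_{q-1,\beta}] : |\gamma(s) - \bY^n_{t_{n,\beta},s}(x,\omega)| > a_q^{1+\varepsilon}\}$; the aim is $\tau(\omega) = \bar{s}_{q-1,\beta}$. If not, then on $[\bar{s}_{q,\beta},\tau(\omega)]$ the forward property \eqref{eq:prop:item3} keeps $\gamma(s) \in P_n(Q_q[a_q^{1+\varepsilon}])$, on which $\bb_n$ is the reflected copy of $w_q$ away from $P_n(S_{q-1})$ and picks up in addition a reflected copy of $w_{q-1}$ near $P_n(S_{q-1})$; splitting $\bb_n(\bY) - \bb_n(\bX)$ accordingly and using \eqref{eq:Gronwall},
\begin{align*}
|\bY^n_{t_{n,\beta},\tau}(x,\omega) - \gamma(\tau)| &\le \underbrace{|\bY^n_{t_{n,\beta},\bar{s}_q}(x,\omega) - \gamma(\bar{s}_q)| + \sqrt{2}\!\!\sup_{s \in [\bar{s}_{q,\beta},\bar{s}_{q-1,\beta}]}\!\!|\bW_s - \bW_{\bar{s}_q}|}_{\le\; 2 a_q^{1+\varepsilon}/(4K)} + \underbrace{C\, v_{q-1}\,\frac{a_{q-1}}{v_q}}_{\le\; a_q^{1+\varepsilon}/(4K)} \\
&\quad + \int_{\bar{s}_q}^{\tau}\|\nabla w_q(s,\cdot)\|_{L^\infty(B_{a_q^{1+\varepsilon}}(\gamma(s)))}\,|\bY^n_{t_{n,\beta},s}(x,\omega) - \gamma(s)|\,ds\,,
\end{align*}
where the first brace uses $a_{q+1}^{1+\varepsilon} \le a_q^{1+\varepsilon}/(4K)$ and $\omega \in \Omega_{\bar s}$, and the second uses $|\bar{s}_{q-1,\beta} - \bar{s}_{q,\beta}| \le 6 a_{q-1}/v_q$ from \eqref{eq:prop:item3} together with \eqref{hyp:epsilon}. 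Since $\int\|\nabla w_q\|_{L^\infty(B_{a_q^{1+\varepsilon}}(\gamma(s)))}\,ds \le C$ by \eqref{eq:gronwall}, Gr\"onwall's lemma gives $|\bY^n_{t_{n,\beta},\tau}(x,\omega) - \gamma(\tau)| \le \tfrac34 a_q^{1+\varepsilon} < a_q^{1+\varepsilon}$, contradicting the definition of $\tau$; the first step $q = n+1 \to q = n$, i.e.\ the transit of the gluing pipe, is identical with $w_{n,\text{glue}}$ in place of $w_q$. This proves $|\bX^n_{t_{n,\beta},s}(x) - \bY^n_{t_{n,\beta},s}(x,\omega)| \le a_q^{1+\varepsilon}$ on each $[\bar{s}_{q,\beta},\bar{s}_{q-1,\beta}]$, and since the pipes met on $[t_{n,\beta},T_n]$ have scale $\le a_0$, the expectation bound follows exactly as in the last line of the proof of Lemma \ref{lem:loopstabback}, splitting over $\tilde{\Omega}_n$ and $\tilde{\Omega}_n^c$ and using $\mathrm{diam}(\T^2) \le 1$ and $\mathbb{P}(\tilde{\Omega}_n^c) \le a_0^{1+\varepsilon}$.

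The only genuine point — and the reason the argument works in this direction but not from large to small scales, cf.\ Remark \ref{rmk:large_small} — is that the Brownian error $a_q^{1+\varepsilon}/(4K)$ accumulated in $P_n(Q_q)$ must stay much smaller than the width $a_{q-1}^{1+\varepsilon}$ of the \emph{next, larger} pipe, which is automatic from $a_q = a_{q-1}^{1+\delta}$, while the drift of the adjacent larger pipe during the short transit, of size $v_{q-1}a_{q-1}/v_q$, must also be $\ll a_q^{1+\varepsilon}$, which is precisely what \eqref{hyp:epsilon} encodes. Thus the main thing to verify carefully is purely bookkeeping: that the four reflected copies of $b_{n,1}$ in \eqref{d:symmetry:VF} together with the time dependence do not spoil the single--pipe bounds of Lemma \ref{lemma:building-block} and Proposition \ref{prop:basicprop} once specialised to $P_n(Q_q)$, and that the probability estimate for $\tilde{\Omega}_n$ survives the polynomially large cardinality of $J_n$ — neither of which presents a difficulty beyond those already handled in Lemma \ref{lem:loopstabback}.
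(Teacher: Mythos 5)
Your route is the paper's own: the paper proves this lemma by rerunning the proof of Lemma \ref{lem:loopstabback} verbatim, replacing $\Omega_n$ by $\tilde{\Omega}_n$, $\Omega_{\bar t}$ by $\Omega_{\bar s}$, and the backward property \eqref{eq:prop:item2} by the forward property \eqref{eq:prop:item3}, the forward times being the mirror images of the backward ones under the symmetry $b_n(t,x)=-b_n(T_n-t,x)$ of \eqref{d:symmetry:VF}; your stopping-time/Gr\"onwall iteration and the final expectation bound coincide with that scheme.

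There is, however, one quantitative step that fails as you wrote it: the Doob estimate. You pair the tolerance $a_q^{1+\varepsilon}/(4K)$ with an interval of length of order $a_{q-2}/v_{q-1}$ (this is indeed the size of $[\bar s_{q,\beta,\mathrm{min}},\bar s_{q-1,\beta}]$, since $\bar s_{q-1,\beta}-\bar s_{q,\beta}=t_{q-1,q-2}+\bigl(\tfrac{a_{q-2}}{a_q}-\beta_q\bigr)\bar\tau_q\le 7\,a_{q-2}/v_{q-1}$), and you claim the resulting power of $a_{q-2}$, namely $(2+2\varepsilon)(1+\delta)^2-\alpha(1+\delta)-1$, is negative by \eqref{hyp:ha}. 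It is not in general: at $\alpha=\tfrac{1}{1-\delta}$ this exponent equals $\tfrac{2\delta(1-\delta-\delta^2)}{1-\delta}+2\varepsilon(1+\delta)^2>0$, and \eqref{hyp:ha} forces $\alpha$ arbitrarily close to $\tfrac{1}{1-\delta}$ when $p$ approaches its threshold in \eqref{hyp:delta}; then the exponential factor tends to $1$ and cannot beat the cardinality $\#B_q\sim a_0^2/(a_qa_{q+1})$ of \eqref{eq:cardinality}. The repair is pure index bookkeeping dictated by the time reversal: the interval on which the forward trajectory occupies $P_n(Q_q)$ is $[\bar s_{q+1,\beta},\bar s_{q,\beta}]$, of length $\lesssim a_{q-1}/v_q$, and it is there that the Brownian increment must be below $a_q^{1+\varepsilon}/(4K)$ (equivalently, on $[\bar s_{q,\beta,\mathrm{min}},\bar s_{q-1,\beta}]$ the tolerance should be $a_{q-1}^{1+\varepsilon}/(4K)$); with that matching the exponent becomes $a_{q-1}^{(2+2\varepsilon)(1+\delta)-\alpha(1+\delta)-1}$, exactly as in \eqref{eq:POmega_n_c}, and is handled by \eqref{hyp:ha}--\eqref{hyp:epsilon}. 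Note that your Gr\"onwall step already implicitly uses the correct matching (you bound the extra drift by $v_{q-1}a_{q-1}/v_q$, i.e.\ you take $|\bar s_{q-1,\beta}-\bar s_{q,\beta}|\lesssim a_{q-1}/v_q$), which is inconsistent with the interval length you used in Doob; the same off-by-one ambiguity is present in the paper's displayed $\tilde{\Omega}_n$ and in the statement of \eqref{eq:prop:item3}, so once the indices are fixed consistently your argument is the intended one, but as written the probability estimate does not close for $p$ close to $2$.
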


\subsection{Distinct solutions to the Advection--Diffusion equation} \label{sec:distinct}

The goal of this section is to use the backward stability of Lemma \ref{lem:loopstabback} and the forward flow stability of Lemma \ref{lem:loopstabfor} together both from the smallest scale $S_{n, \text{glue}}$ of the velocity field $\bb_n$ to get a full behavior of the solution $\solAD_n $ of \eqref{eq:ADE} with { a positive} initial datum 
$\ADin$ and velocity field $\bb_n$ in the time interval $[0, T_n]$.
{We choose the initial datum $\ADin$ such that 
\begin{equation}\label{eq:initial}
		\begin{cases}
			\supp (\ADin) \subset R_0 + B_{a_0^{1+\varepsilon/2}}(0)\,, \text{ where } R_0 \text{ is the starting zone given in \eqref{eq:R0}}\,,\\
			\ADin \ge 0 \text{ on } \T^2, \qquad \ADin \equiv 1 \text{ on } R_0,  \qquad \|\nabla \ADin \|_{L^\infty }\le a_0^{-1-\varepsilon/2} \text{ and } \| \ADin \|_{L^\infty} \leq 1.
		\end{cases}
\end{equation}
}
 To prove the result we  decompose the solution $\solAD_n$  in a suitable way, and do the gluing on each piece.

\begin{lem}\label{lem:claims}
Let $b_n \in L^\infty ((0,1) ; BV (\T^2)) \cap L^\infty$ be the divergence free velocity field defined in \eqref{d:symmetry:VF} and $\ADin \in C^\infty$ be the initial datum and $\solAD_n$ be the corresponding solution to the advection{--}diffusion.
    For every $n \ge 0$, there is a sequence $(\solAD_{n,k})_{k=1,\ldots, N_n}$ with $\frac{9 a_0^2}{10 a_n a_{n+1}}  \leq N_n \leq \frac{a_0^2}{a_n a_{n+1}}$ and a time $T_{n} < T$ such that  the following hold true
    \begin{align*}
        (i) \quad &\solAD_{n,k} \ge 0 &&\forall k=1,...,N_n\,.\\
        (ii) \quad & \sum_{k=1}^{N_n} \solAD_{n,k} (t,x) \le \solAD_{n} (t,x) &&\forall (t,x) \in [0,T]\times \T^2\,.\\
        (iii) \quad & \sum_{k=1}^{N_n} \int_{\T^2}\solAD_{n,k} (t,x) dx \ge \frac{4}{5} \int_{\T^2} \ADin (x) dx &&\forall t \in ( T_{n}, T)\,.\\
        (iv) \quad & \sum_{k=1}^{N_n} \int_{\{x,y {\ge} 0\}} \solAD_{n,k}(x,t) dx \ge \frac{2}{3}\int_{\T^2}\ADin(x) dx &&\forall t \in ( {T}_{n}, T)\,, \qquad n \text{ even}\,,\\
        & \sum_{k=1}^{N_n} \int_{\{x,y {\le} 0\}} \solAD_{n,k}(x,t) dx \ge \frac{2}{3}\int_{\T^2}\ADin(x) dx &&\forall t \in ( T_n , T)\,, \qquad n \text{ odd}\,.
    \end{align*}
\end{lem}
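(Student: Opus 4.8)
I will index the pieces by the multi-indices $\beta\in B_n$ of \eqref{eq:J-n}, so that $N_n:=\#B_n$ lies in the stated range by \eqref{eq:cardinality} (relabelling $B_n$ as $\{1,\dots,N_n\}$ then produces the sequence $(\solAD_{n,k})_k$). For $\beta\in B_n$, $\solAD_{n,\beta}$ should be \emph{the portion of $\solAD_n$ occupying the middle zone $S_{n,\text{glue}}[2a_{n+1}^{1+\varepsilon}]$ at the instant $t_{n,\beta}$}: via the forward stochastic representation \eqref{eq:FKforward}, I set $\solAD_{n,\beta}\equiv 0$ for $t<t_{n,\beta}$ and, for $t\ge t_{n,\beta}$ and all $g\in L^\infty(\T^2)$,
\begin{equation*}
	\int_{\T^2} g(x)\,\solAD_{n,\beta}(t,x)\,dx \;=\; \int_\Omega\!\int_{\T^2} g\big(\bY^n_{0,t}(x_0,\omega)\big)\,\ADin(x_0)\,\mathbbm{1}_{\left\{\bY^n_{0,t_{n,\beta}}(x_0,\omega)\,\in\, S_{n,\text{glue}}[2a_{n+1}^{1+\varepsilon}]\right\}}\,dx_0\,d\P(\omega)\,.
\end{equation*}
Splitting the Brownian path at $t_{n,\beta}$ by the Markov property, $\solAD_{n,\beta}$ is for $t\ge t_{n,\beta}$ the advection--diffusion evolution (started at $t_{n,\beta}$) of a non-negative measure $\mu_\beta\le\solAD_n(t_{n,\beta},\cdot)\L^2$ supported in $S_{n,\text{glue}}[2a_{n+1}^{1+\varepsilon}]$ of total mass $\int_{S_{n,\text{glue}}[2a_{n+1}^{1+\varepsilon}]}\solAD_n(t_{n,\beta},\cdot)$. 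Hence $(i)$ is immediate, $\solAD_{n,\beta}\le\solAD_n$ individually by monotonicity of the evolution, and $\int_{\T^2}\solAD_{n,\beta}(t,\cdot)=\int_{S_{n,\text{glue}}[2a_{n+1}^{1+\varepsilon}]}\solAD_n(t_{n,\beta},\cdot)$ for all $t\ge t_{n,\beta}$ by mass conservation; one also checks $t_{n,\beta}<T_n$ for every $\beta$, so all pieces are active on $(T_n,T)$.

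For $(ii)$, testing with $g\ge 0$ and summing gives $\sum_k\int g\,\solAD_{n,k}(t,\cdot)=\int_\Omega\int g(\bY^n_{0,t}(x_0,\omega))\,\ADin(x_0)\big(\sum_\beta\mathbbm{1}_{\{\bY^n_{0,t_{n,\beta}}(x_0,\omega)\in S_{n,\text{glue}}[2a_{n+1}^{1+\varepsilon}]\}}\big)\,dx_0\,d\P$, so $(ii)$ reduces to showing that, for a.e.\ $(x_0,\omega)$, a stochastic trajectory of $\bb_n$ meets $S_{n,\text{glue}}[2a_{n+1}^{1+\varepsilon}]$ at \emph{at most one} of the times $\{t_{n,\beta}\}_{\beta\in B_n}$. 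Deterministically this is transparent: a particle crosses $S_{n,\text{glue}}$ during a single window of length $\lesssim a_n/v_{n+1}$ (the middle velocity has size $v_{n+1}$) and is then carried one-way towards $P_n(Q_n)$ without returning, whereas consecutive $t_{n,\beta}$ differ by at least $\overline{\tau}_{n+1}=a_{n+1}/v_n\gg a_n/v_{n+1}$ --- this is exactly where \eqref{hyp:ha} (i.e.\ $\alpha>1$) enters. I expect \emph{this} step --- upgrading ``at most one crossing'' to the noisy trajectories, i.e.\ ruling out that a Brownian excursion makes a trajectory leave and re-enter $S_{n,\text{glue}}$ --- to be the main obstacle; it is handled by intersecting the indicator defining $\solAD_{n,\beta}$ with a good event of probability $\ge 1-O(a_0^{1+\varepsilon})$ on which the trajectory shadows the deterministic one, so that each unit of diffusing mass is counted at most once.

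For $(iii)$ I bound $\int_{S_{n,\text{glue}}[2a_{n+1}^{1+\varepsilon}]}\solAD_n(t_{n,\beta},\cdot)$ from below via the \emph{backward} representation \eqref{eq:FKbackward}: for $x\in S_{n,\text{glue}}[2a_{n+1}^{1+\varepsilon}]$ and $\omega\in\Omega_n$, Lemma \ref{lem:loopstabback} and \eqref{eq:flow-backward-R-0} place $\bY^n_{t_{n,\beta},0}(x,\omega)$ within $a_0^{1+\varepsilon}$ of $\bX^n_{t_{n,\beta},0}(x)\in R_0[a_0^{1+\varepsilon}]$, hence inside $R_0$, where $\ADin\equiv 1$ by \eqref{eq:initial}; thus $\int_{S_{n,\text{glue}}[2a_{n+1}^{1+\varepsilon}]}\solAD_n(t_{n,\beta},\cdot)\ge\P(\Omega_n)\,|S_{n,\text{glue}}[2a_{n+1}^{1+\varepsilon}]|$. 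Summing over $\beta$: since $\dv\bb_n=0$, the sets $\bX^n_{t_{n,\beta},0}(S_{n,\text{glue}}[2a_{n+1}^{1+\varepsilon}])$ all have measure $|S_{n,\text{glue}}[2a_{n+1}^{1+\varepsilon}]|$, they are pairwise disjoint (by the crossing-time separation above and the one-way transport), and by the choice of $B_n$ and $R_0$ --- through \eqref{eq:cardinality}, \eqref{eq:sum-a_k}, \eqref{eq:R0} --- they exhaust $R_0$ up to the fraction $\prod_k(1-8a_{k-2}^\varepsilon)\ge\tfrac{9}{10}$ and lower-order collars. Hence $\sum_\beta\int_{S_{n,\text{glue}}[2a_{n+1}^{1+\varepsilon}]}\solAD_n(t_{n,\beta},\cdot)\ge\P(\Omega_n)\big(\tfrac{9}{10}-o(1)\big)|R_0|$; since $|R_0|=a_0^2\le\int\ADin\le(1+o(1))a_0^2$ and $\P(\Omega_n)\ge 1-a_0^{1+\varepsilon}$, this is $\ge\tfrac{4}{5}\int\ADin$ for $a_0$ small, which gives $(iii)$ in view of $\int_{\T^2}\solAD_{n,\beta}(t,\cdot)=\int_{S_{n,\text{glue}}[2a_{n+1}^{1+\varepsilon}]}\solAD_n(t_{n,\beta},\cdot)$.

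For $(iv)$ (take $n$ even; the odd case is identical with $\{x,y\le 0\}$) I pivot the forward trajectory at $t_{n,\beta}$: $\bY^n_{0,t}(x_0,\omega)=\bY^n_{t_{n,\beta},t}\big(\bY^n_{0,t_{n,\beta}}(x_0,\omega_1),\omega_2\big)$, with $\omega_1,\omega_2$ the independent Brownian increments on $[0,t_{n,\beta}]$ and $[t_{n,\beta},t]$. On $\big\{\bY^n_{0,t_{n,\beta}}(x_0,\omega_1)\in S_{n,\text{glue}}[2a_{n+1}^{1+\varepsilon}]\big\}\cap\{\omega_2\in\tilde\Omega_n\}$, Lemma \ref{lem:loopstabfor} together with \eqref{eq:flow-odd-even} forces $\bY^n_{0,t}(x_0,\omega)\in\{x,y\ge 0\}$ for every $t\ge T_n$, so $\int_{\{x,y\ge 0\}}\solAD_{n,\beta}(t,\cdot)\ge\P(\tilde\Omega_n)\int_{S_{n,\text{glue}}[2a_{n+1}^{1+\varepsilon}]}\solAD_n(t_{n,\beta},\cdot)$. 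Summing and inserting the $(iii)$-bound yields $\sum_\beta\int_{\{x,y\ge 0\}}\solAD_{n,\beta}(t,\cdot)\ge\P(\tilde\Omega_n)\P(\Omega_n)\big(\tfrac{9}{10}-o(1)\big)|R_0|\ge\tfrac{2}{3}\int\ADin$ for $a_0$ small, using $\P(\Omega_n),\P(\tilde\Omega_n)\ge 1-a_0^{1+\varepsilon}$ and $\tfrac{9}{10}>\tfrac{2}{3}$. The reason the whole decomposition is organized around the times $t_{n,\beta}$ and the middle zone --- instead of just pushing $\ADin$ forward --- is Remark \ref{rmk:large_small}: the stochastic flow can only be compared with the Lagrangian flow when it runs from the smallest scale $S_{n,\text{glue}}$ \emph{outward}, so the two halves of the dynamics (into the origin and back out) must be glued at $S_{n,\text{glue}}$, using Lemma \ref{lem:loopstabback} for the first half and Lemma \ref{lem:loopstabfor} for the second.
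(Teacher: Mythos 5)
Your decomposition is genuinely different from the paper's: you partition the diffused mass directly by the event ``the stochastic trajectory sits in $S_{n,\text{glue}}[2a_{n+1}^{1+\varepsilon}]$ at time $t_{n,\beta}$'', whereas the paper builds the pieces iteratively, taking as initial datum at $t_{k+1}$ the cut-off $\one_{S}\cdot\solbAD[n,k](t_{k+1},\cdot)$ of the \emph{residual} $\solbAD[n,k]=\solAD_n-\sum_{j\le k}\solAD_{n,j}$. That iterative subtraction makes $(i)$ and $(ii)$ automatic (by the maximum principle and induction), and the possible ``double counting'' of mass reappears only as the return term $B$ in the proof of $(iii)$, which is then killed using the forward stability Lemma \ref{lem:loopstabfor} together with the no-return property \eqref{eq:prop:item3} of Proposition \ref{prop:basicprop}. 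In your scheme the same difficulty lands squarely on $(ii)$, and this is where your argument has a genuine gap: $(ii)$ is an exact pointwise inequality for \emph{all} $(t,x)$, and with diffusivity $1$ a trajectory that has left the tiny set $S_{n,\text{glue}}$ returns to it with positive probability, so ``each trajectory meets $S_{n,\text{glue}}[2a_{n+1}^{1+\varepsilon}]$ at at most one time $t_{n,\beta}$'' is simply false almost surely; the deterministic separation $\overline{\tau}_{n+1}\gg a_n/v_{n+1}$ does not survive the noise outside a good event. You acknowledge this and propose to intersect the defining indicator with a good event, but this is asserted, not carried out, and it is not a cosmetic fix: to prevent a trajectory counted at $t_{n,\beta}$ from being counted again at a later $t_{n,\beta'}$, the good event must constrain the Brownian increments \emph{after} $t_{n,\beta}$ (a $\tilde\Omega_n$-type event as in Lemma \ref{lem:loopstabfor}). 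Once you condition on such a future-dependent event, the Markov-property claim in your first paragraph --- that $\solAD_{n,\beta}$ is, for $t\ge t_{n,\beta}$, the advection--diffusion evolution of a measure $\mu_\beta\le\solAD_n(t_{n,\beta},\cdot)\L^2$ --- is no longer valid, so the justification of $(i)$, of $\solAD_{n,\beta}\le\solAD_n$, and of the structure you invoke in $(iii)$--$(iv)$ has to be redone (total mass is still constant in $t$ by definition of the restricted pushforward, but the ``AD evolution of $\mu_\beta$'' picture is gone). In addition, the disjointness of the modified events across all pairs $\beta<\beta'$ would itself need a quantitative proof (forward stability started from an \emph{arbitrary} point of $S_{n,\text{glue}}[2a_{n+1}^{1+\varepsilon}]$ at time $t_{n,\beta}$, plus the one-way property of the deterministic flow), i.e.\ essentially the content that the paper instead routes through the bound on $B$.

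The remaining parts of your sketch are sound and close in spirit to the paper: the lower bound in $(iii)$ via the backward Feynman--Kac formula \eqref{eq:FKbackward}, Lemma \ref{lem:loopstabback} and \eqref{eq:flow-backward-R-0}, the counting $N_n\,\L^2(S_{n,\text{glue}})\gtrsim\tfrac9{10}\L^2(R_0)$ from \eqref{eq:cardinality}, and the transfer to the quadrant in $(iv)$ via Lemma \ref{lem:loopstabfor} and \eqref{eq:flow-odd-even} (the paper does this with a cut-off function $\chi$ and a comparison with the transport pieces \eqref{eq:Lagchunks}, which also lets it quantify the error $|A|\le a_0^{2+\varepsilon/2}$ rather than arguing on events). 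But as written, the central step $(ii)$ --- and with it the consistency of the whole decomposition --- is not established. Either carry out the good-event surgery in full (defining the pieces as restricted pushforwards, proving pairwise disjointness of the events, and reproving mass conservation and the $(iii)$--$(iv)$ bounds for the conditioned objects), or adopt the paper's residual-subtraction definition \eqref{eq:chunkzero}--\eqref{eq:chunks}, which avoids the issue altogether.
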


\begin{proof}
From \eqref{eq:J-n}, setting $N_n := \# B_n = \#J_n$, we identify the sequence $\{ t_{n , \beta} \}_{\beta \in B_n}$ with $\{ {t}_{k} \}_{k =1}^{N_n}$, namely there exists a bijective map 
$$T:  \{ t_{n , \beta} \}_{\beta \in B_n} \to \{ {t}_{k} \}_{k =1}^{N_n}$$
 so that $ {t}_{k+1} >  {t}_{k} $ for any $k$. From \eqref{eq:cardinality} we get
 \begin{align} \label{eq:bound-N-n}
 \frac{9 a_0^2}{10 a_n a_{n+1}}  \leq N_n \leq \frac{a_0^2}{a_n a_{n+1}} \,.
\end{align}  
    
     We define by induction  
    \begin{equation}\label{eq:chunkzero}
        \solAD_{n,1} =
        \begin{cases}
            \text{the solution of }
            \begin{cases}
                \partial_t \solAD_{n,1} + b_n\cdot \nabla \solAD_{n,1} = \Delta \solAD_{n,1}\,,\\
                \solAD_{n,1} (t_1,\cdot) = \one_{\SS} \cdot \solAD_{n}(t_1,\cdot)\,.
            \end{cases}
            &\text{ if } t\ge t_1\,,\\
            0 &\text{ if } t < t_1 \,,
        \end{cases}
    \end{equation}
    where we use the shorthand notation $\SS:=S_{n , \text{glue}}[2a_n^{1+\varepsilon}]$, with $S_{n , \text{glue}}$ defined in Proposition \ref{prop:basicprop}. Then 
    $$0 \le \solbAD[n,1] := \solAD_n -\solAD_{n,1} \le \solAD_n \le 1\,$$
    {thanks to the maximum principle for parabolic PDEs (or by the Feynman-Kac representation formula \eqref{eq:FKbackward}) and $\ADin \le 1$.}
    Now for every $k = 2,\ldots,N_{n-1}$ we set
    \begin{equation} \label{eq:chunks}
        \solAD_{n,k+1} = 
        \begin{cases}
            \text{the solution of }
            \begin{cases}
                \partial_t \solAD_{n,k+1} + \bb_n\cdot \nabla \solAD_{n,k+1} = \Delta \solAD_{n,k+1}\,,\\
                \solAD_{n,k+1} (t_{k+1},\cdot) = \one_{\SS} \cdot \solbAD[n,k](t_{k+1},\cdot)\,.
            \end{cases}
            &\text{ if } t\ge t_{k+1}\,,\\
            0 &\text{ if } t < t_{k+1}\,,
        \end{cases}
    \end{equation}
    and we define  $\solbAD[n,k] = \solbAD[n,k-1] - \solAD_{n,k} $, from which we get
    \begin{equation} \label{eq:sum-solrest}
    \solAD_n = \sum_{j=1}^k\solAD_{n,j} + \overline{\theta}_{n,k} \,.
    \end{equation}
    {We} deduce 
     the pointwise bound
    \begin{equation}\label{eq:solrest}
        0 \le \solbAD[n,k] = \solbAD[n,k-1] - \solAD_{n,k} = \solAD_n - \sum_{j = 1}^k\solAD_{n,j} \le \solAD_n \le 1 \,.
    \end{equation}
    Notice that the first inequality holds since it holds at the starting time $t=t_{k+1}$ and $\solAD_{n,k}$ solves the advection--diffusion equation.

    \textbf{Proof of $(i)$ and $(ii)$:}
    We prove these two inequalities together by induction. First notice that for every $(t,x) \in [0,T]\times \T^2, \solAD_n (t,x) \ge 0$ as it solves the advection--diffusion equation with positive initial datum $\ADin \geq 0$. Therefore 
    \[0\le \solAD_{n,1}(t,x) \le \solAD_n(t,x) \qquad \forall (t,x) \in [0,T]\times \T^2 \,. \]
    Suppose that for some $1 \le N \le N_n-1$ we have $\solAD_{n,k}\ge 0$ for all $1 \le k \le N$ and
    $\sum_{k=1}^N\solAD_{n,k} \le \solAD_n $ then 
    $$0\le \solbAD[n,N] \le \solAD_{n}\,.$$ Therefore, from \eqref{eq:chunks} we have  
    $$0\le \solAD_{n,N+1} \le \solbAD[n,N]$$ and also
    \[\sum_{k=1}^{N+1}\solAD_{n,k} \le \sum_{k=1}^{N}\solAD_{n,k} + \solbAD[n,N] \overset{\eqref{eq:sum-solrest}}{=} \solAD_n\]
    from which we conclude the proof.
    
     \textbf{Inequality $(iii)$:}
Let $T_n$ be defined in \eqref{d:T_n}    
    Let $t \in ( T_n, T)$. Let us fix $t_k \in J_n$. By the Backward Feynman-Kac formula \eqref{eq:FKbackward} and conservation of the average of solutions to \eqref{eq:ADE} with divergence free velocity fields  we have
    \begin{align*}
        \intTd \solAD_{n,k} (t,x) dx &= \intTd \solAD_{n,k} (t_{k},x) dx = \int_{\SS} \solbAD[n,k] (t_{k},x) dx\\
        &=\int_\Omega\int_{\SS}\ADin\big(\bY_{t_{k},0}^n(x,\omega)\big) dx d\P(\omega) - \int_{\SS} \sum_{j=1}^{k} \solAD_{n,j}(t_{k},x) dx =: A - B
    \end{align*}
    Quantity $A$ represents the mass brought up to the middle by the $k$-th piece of the splitting. Quantity $B$ stands for the mass that goes back into $S_{n , \text{glue}}$ after a certain time. We now lower bound $A$ using the backward stability Lemma \ref{lem:loopstabback}  and upper bound $B$ using the forward stability Lemma \ref{lem:loopstabfor} together with properties of the deterministic flow given by Proposition \ref{prop:basicprop}. 
    \\
    \\
    Using property \eqref{eq:flow-backward-R-0}, property $(ii)$ of this Lemma, the backward stability Lemma \ref{lem:loopstabback}  and the property on the initial datum \eqref{eq:initial} 
    \begin{align*}
        A &= \int_{\SS} \ADin\big(\bX_{t_{k},0}^n \big) dx + \int_\Omega \int_{\SS} \Big( \ADin \big(\bY_{t_{k},0}^n \big) - \ADin \big( \bX_{t_{k},0}^n \big)\Big) dx d\P(\omega)\\
        &\ge \L^2(S_{n , \text{glue}}) - \|\nabla \ADin \|_{L^\infty} \L^2(S_{n , \text{glue}} ) \sup_{\SS} \int_\Omega \big\vert \bX_{t_{k},0}(x) - \bY_{t_{k},0}(x,\omega) \big \vert d\P(\omega)  - 2 a_{n+1}^{2+ 2\varepsilon} \\
        &\ge (1-  a_0^{\varepsilon/2})\L^2(S_{n , \text{glue}})\,.
    \end{align*} 
    
     Recall the set $\tilde{\Omega}_n \subset \Omega$ given in the forward stability Lemma \ref{lem:loopstabfor}.
   From the  forward Feynman-Kac formula \eqref{eq:FKforward} we split
    \begin{align*}
        B  &= \sum_{j=1}^{k} \int_{\T^2} \one_{\SS}(x)\solAD_{n,j}(t_{k},x) dx = \sum_{j=1}^{k} \int_{\Omega} \int_{\T^2} \one_{\SS} \big(\bY_{t_{j},t_{k}}(x,\omega)\big) \solAD_{n,j}(t_{j},x) dx d\P(\omega)
        \\
        & \leq \sum_{j=1}^{k} \int_{\tilde{\Omega}_n} \int_{\SS} \one_{S_{n , \text{glue}}} \big(\bY_{t_{j},t_{k}}(x,\omega)\big)   dxd\P(\omega)  + \int_{\tilde{\Omega}_n^c} \int_{\T^2} \one_{\SS} \big(\bY_{t_{j},t_{k}}(x,\omega)\big) \solAD_{n,j}(t_{j},x)  dxd\P(\omega)
        \\
        & \quad + a_{n+1}^{2+ 2 \varepsilon}
        =: B' + B'' + a_{n+1}^{2+ 2 \varepsilon} \,,
    \end{align*}
   where 
    in the first inequality we used the definition of the initial datum for $\solAD_{n, j}$ at time $t_j$.
   From Lemma \ref{lem:loopstabfor}   and property
   \eqref{eq:prop:item3} of Proposition \ref{prop:basicprop} on $\bX_{t_j, t_i}$ we have that $B'=0$. 
We now estimate $B''$. Thanks to $\P(\tilde{\Omega}_n ^c) \le  a_0^{1+\varepsilon}$ by Lemma \ref{lem:loopstabfor}, properties  $(i), (ii)$ which imply $\solAD_n(t_{k-1},\cdot) - \sum_{j=1}^{k-1} \solAD_{n,j}(t_{k-1},\cdot)  \geq 0$ and the forward Feynman Kac formula \eqref{eq:FKforward},  we have
    \begin{align*}
        B'' &\le \int_{\tilde{\Omega}_n^c} \int_{\T^2}  \one_{\SS} \big(\bY_{t_{k-1},t_k}\big) \left( \solAD_n(t_{k-1},\cdot) - \sum_{j=1}^{k-1} \solAD_{n,j}(t_{k-1},\cdot)\right)
        + \sum_{j=1}^{k-1}\one_{\SS} \big(\bY_{t_j,t_k}\big) \solAD_{n,j}(t_j,\cdot) dx d\P(\omega)\\
        &\le \| \solAD_n (t_k, \cdot ) \|_{L^\infty (\T^2)} \P(\tilde{\Omega}_n^c)\L^2(S_{n , \text{glue}}) 
        \\
        & \quad + \int_{\tilde{\Omega}_n^c} \int_{\T^2}  -\one_{\SS}(x) \sum_{j=1}^{k-1} \solAD_{n,j}(t_{k},\cdot)
        + \sum_{j=1}^{k-1}\one_{\SS}(x)\solAD_{n,j}(t_k,\cdot) dx d\P(\omega)\\
        &\le a_0^{1+\varepsilon} \L^2(S_{n , \text{glue}})
    \end{align*}
    where in the last inequality we used the maximum principle and $\| \theta_{in} \|_{L^\infty} \leq 1$.
%
    Finally, {from \eqref{eq:bound-N-n}, the definition of $S_{n,\text{glue}}$ in Proposition \ref{prop:basicprop} and $R_0 = x_0 + [0,a_0]^2$, we deduce
    \[ N_n \L^2(S_{n , \text{glue}}) \ge \frac{9}{10} \L^2(R^0)(1- 16 a_n^{\varepsilon}).\]}
    {Summarizing the above estimates, we obtain}
    \begin{align*}
        \sum_{k=1}^{N_n} \int_{\T^2}\solAD_{n,k} (t,x) dx 
        &\ge N_n \L^2(S_{n , \text{glue}})(1- a_0^{1+\varepsilon}- a_0^{\varepsilon/2} - a_{n+1}^{2 \varepsilon} )\\
        &\ge \frac{9}{10} \L^2(R_0)(1- a_0^{1+\varepsilon}- a_0^{\varepsilon/2} - a_{n+1}^{2 \varepsilon} ){(1-a_n^\varepsilon)} \\
        &\ge \frac{4}{5} \int_{\T^2} \ADin(x)dx \,,
    \end{align*}
    where we used smallness of $a_0$.

    \textbf{Claim $(iv)$ :} 
    Suppose, that $n$ is even. Let $t \in (T_n, T)$.
    Consider a smooth function $\chi $ supported on $\{x_1,x_2 > 0 \}$, such that 
    \begin{equation*}
          \chi(x)\vert_{\{x_1, x_2 \ge 2 a_0\}} \equiv 1\,, \qquad \|\chi\|_{L^\infty(\T^2)}\le 1 \,, \qquad \|\nabla \chi \|_{L^\infty} \le a_0^{-1-\varepsilon/2}\,.
    \end{equation*}
    Recalling the definition of $ \solAD_{n,i}$ in \eqref{eq:chunks} we decompose 
    \begin{align*}
        \sum_{k=1}^{N_n} \int_{\{x_1,x_2 \ge a_0\}} \solAD_{n,k}(t,x)
        \geq \sum_{k=1}^{N_n} \int_{\T^2} \big(\solAD_{n,k} - \solTE_{n,k}\big) \chi(x) dx 
        +\sum_{k=1}^{N_n} \int_{\T^2} \solTE_{n,k} \chi(x) dx =:A + B
    \end{align*}
    where  $\solTE_{n,k}$ is given as the solution of
    \begin{equation}\label{eq:Lagchunks}
        \begin{cases}
            \partial_t \solTE_{n,k}(t,x) + b_n\cdot \nabla \solTE_{n,k}(t,x) = 0\,, &t \ge t_k\,, \\
            \solTE_{n,k}(t_k,\cdot ) = \solAD_{n,k}(t_k,\cdot)\,,
        \end{cases}
    \end{equation}
    with $\solAD_{n,k}\equiv 0$ for $t < t_k$  for any $k= 1,\ldots,N_n$. Thanks to \eqref{eq:prop:item3} of Proposition \ref{prop:basicprop} and property $(iii)$ we get 
    \begin{align*}
        B = \sum_{k=1}^{N_n} \int_{\T^2} \solTE_{n,k}(t,x) \chi(x) dx
        = \sum_{k=1}^{N_n} \int_{\T^2} \solTE_{n,k}(t,x) dx =    \sum_{k=1}^{N_n} \int_{\T^2} \solAD_{n,k}(t_k,x) dx
        \ge \frac{4}{5} a_0^2 
    \end{align*}
    On the other hand, using the forward Feynmann-Kac formula  \eqref{eq:FKforward}  and stability estimate \ref{lem:loopstabfor}  we bound
    \begin{align*}
        |A| &\le \sum_{k=1}^{N_n}\int_\Omega\intTd \big\vert \chi\big(\bY_{t_k,t}(x,\omega)\big) - \chi\big(\bX_{t_k ,t}(x)\big) \big\vert \underbrace{\solAD_{n,k}(t_k,x)}_{\le \one_{\SS}(x)} dx d\P(\omega) \\
        &\le \sum_{k=1}^{N_n} \|\nabla \chi \|_{L^\infty} \L^2 (\SS) \sup_{x\in\SS} \int_\Omega \big\vert \bY_{t_k,t}(x,\omega) - \bY_{t_{k},t}(x)\big\vert d\P(\omega) \\
        &\le N_n a_0^{-1-\varepsilon/2} \L^2(S_{n , \text{glue}}) a_0^{1+\varepsilon} \le a_0^{2+ \varepsilon/2} \,.
    \end{align*}
    Therefore, using {that $\int_{\T^2}\ADin dx \sim a_0^2$ by the properties of $\ADin$ given in} \eqref{eq:initial} we get  
    $$\sum_{k=1}^{N_n} \int_{\{x,y {\ge} a_0\}} \solAD_{n,k}(x,t) dx \ge \left(\frac{4}{5} -  a_0^{\varepsilon/2}\right)a_0^2\ge \frac{2}{3}\int_{\T^2}\ADin(x) dx \,,$$
    for all $t \in (T_n , T)$ for any $n $ even.
        The proof for $n$ odd is similar, using corresponding property \eqref{eq:prop:item3} of Proposition \ref{prop:basicprop} for $n $ odd we get 
    $$\sum_{k=1}^{N_n} \int_{\{x,y {\leq} - a_0\}} \solAD_{n,k}(x,t) dx \ge  \frac{2}{3}\int_{\T^2}\ADin(x) dx \,.$$
\end{proof}

\subsection{Proof of Theorem \ref{thm:NU_ADE_loops}} \label{sec:concluding-proof}

{
The two distinct solutions to \eqref{eq:ADE} will be weak-$\ast$ limits of the sequence $\{\solAD_{2n}\}_n$, respectively $\{\solAD_{2n+1}\}_n$. Firstly, we ensure that such  solutions are indeed parabolic solutions thanks to an Aubin--Lions type argument, see \cites{A63,L69}.
\begin{lem}\label{lem:parabolic_limit}
Let $\{\solAD_n\}_n$  be the sequence of unique bounded solutions to \eqref{eq:ADE} with divergence-free velocity fields $b_n \in L^\infty_{t,x}$ and initial data $\ADin \in L^\infty$.  Let $\theta \in L^\infty$ be any $L^\infty$-weak-$\ast$ limit of a subsequence of $\{\solAD_n\}_n$. Suppose that $b_n \to b $ in $L^1_{t,x}$. Then, $\theta$ is a parabolic solution of the \eqref{eq:ADE} as defined in Definition \ref{d:parabolic}. 
\end{lem}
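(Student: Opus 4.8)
The plan is to run a standard ``suitable weak solution'' argument. Since each $b_n\in L^\infty_{t,x}\subset L^2([0,T]\times\T^d)$ is divergence free, by the remark after Definition~\ref{d:parabolic} (see \cite{BCC24}) the unique bounded solution $\solAD_n$ is the parabolic solution associated to $b_n$ and satisfies the \emph{local energy equality}
\[
\partial_t \frac{|\solAD_n|^2}{2} + \diver\!\left( b_n \frac{|\solAD_n|^2}{2}\right) = \Delta \frac{|\solAD_n|^2}{2} - |\nabla \solAD_n|^2 \,,
\]
and in particular the global energy equality. First, I would fix a subsequence (not relabelled) along which $\solAD_n\overset{*}{\rightharpoonup}\solAD$ in $L^\infty_{t,x}$ and collect the uniform bounds: the maximum principle gives $\|\solAD_n\|_{L^\infty_{t,x}}\le\|\ADin\|_{L^\infty}$, and the global energy equality gives $\sup_n\|\nabla\solAD_n\|_{L^2_{t,x}}^2\le\tfrac12\|\ADin\|_{L^2}^2$. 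Hence $\{\solAD_n\}$ is bounded in $L^\infty_{t,x}\cap L^2_tH^1_x$, so up to a further subsequence $\nabla\solAD_n\rightharpoonup\nabla\solAD$ in $L^2_{t,x}$, and $\solAD\in L^\infty_{t,x}\cap L^2_tH^1_x$ by weak-$*$ and weak lower semicontinuity of the norms.

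The second step is to upgrade this to strong convergence in $L^2_{t,x}$ by an Aubin--Lions--Simon argument \cites{A63,L69}. From the equation, $\partial_t\solAD_n=\Delta\solAD_n-\diver(b_n\solAD_n)$ in the sense of distributions; here $\Delta\solAD_n$ is bounded in $L^2_tH^{-1}_x$, while $\|b_n\solAD_n\|_{L^1_{t,x}}\le\|\solAD_n\|_{L^\infty_{t,x}}\|b_n\|_{L^1_{t,x}}$ is uniformly bounded (since $b_n\to b$ in $L^1_{t,x}$), so $\diver(b_n\solAD_n)$ is bounded in $L^1_tW^{-1,1}_x$ and therefore $\{\partial_t\solAD_n\}$ is bounded in $L^1_tW^{-1,1}_x$. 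Since $H^1(\T^d)$ embeds compactly into $L^2(\T^d)$, which embeds continuously into $W^{-1,1}(\T^d)$, the $L^1$-in-time version of the Aubin--Lions--Simon lemma gives relative compactness of $\{\solAD_n\}$ in $L^2_{t,x}$; extracting a further subsequence (whose strong limit must coincide with $\solAD$) we obtain $\solAD_n\to\solAD$ strongly in $L^2_{t,x}$ and a.e., and in particular $|\solAD_n|^2\to|\solAD|^2$ in $L^1_{t,x}$.

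Finally, I would pass to the limit in both the equation and the local energy equality. For $\varphi\in C^\infty_c([0,T)\times\T^d)$, the identity $\int_0^T\!\int_{\T^d}\solAD_n(\partial_t\varphi+b_n\cdot\nabla\varphi+\Delta\varphi)=-\int_{\T^d}\ADin\varphi(0,\cdot)$ passes to the limit: the $\partial_t\varphi$ and $\Delta\varphi$ terms by weak-$*$ convergence, the datum term being constant, and the advection term because $\solAD_n b_n\to\solAD b$ in $L^1_{t,x}$: indeed $\solAD(b_n-b)$ and $(\solAD_n-\solAD)(b_n-b)$ tend to $0$ in $L^1_{t,x}$ by the uniform $L^\infty$ bounds and $\|b_n-b\|_{L^1_{t,x}}\to0$, while $(\solAD_n-\solAD)b\to0$ in $L^1_{t,x}$ by dominated convergence (uniformly bounded, $\to0$ a.e., dominated by $2\|\ADin\|_{L^\infty}|b|\in L^1_{t,x}$); hence $\solAD$ is a distributional solution. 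For the local energy inequality, testing the local energy equality for $\solAD_n$ against $0\le\varphi\in C^\infty_c([0,T)\times\T^d)$ gives
\[
\int_0^T\!\!\int_{\T^d}|\nabla\solAD_n|^2\,\varphi = \int_0^T\!\!\int_{\T^d}\frac{|\solAD_n|^2}{2}\big(\partial_t\varphi+b_n\cdot\nabla\varphi+\Delta\varphi\big)+\int_{\T^d}\frac{|\ADin|^2}{2}\varphi(0,\cdot)\,,
\]
and letting $n\to\infty$ the right-hand side converges (using $|\solAD_n|^2\to|\solAD|^2$ in $L^1_{t,x}$, the uniform $L^\infty$ bounds, and $b_n\to b$ in $L^1_{t,x}$ as above), whereas $\liminf_n\int_0^T\!\int_{\T^d}|\nabla\solAD_n|^2\varphi\ge\int_0^T\!\int_{\T^d}|\nabla\solAD|^2\varphi$ because $\sqrt{\varphi}\,\nabla\solAD_n\rightharpoonup\sqrt{\varphi}\,\nabla\solAD$ in $L^2_{t,x}$ and $\varphi\ge0$. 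Rearranging yields exactly the distributional inequality \eqref{local:energy} for $\solAD$ (with the initial value $\tfrac12|\ADin|^2$ built in), so $\solAD$ is a parabolic solution. The main obstacle is the compactness step: because $b_n$ is controlled only in $L^1_{t,x}$, the equation provides merely an $L^1$-in-time bound on $\partial_t\solAD_n$ in a negative-order Sobolev space, so one has to use the $L^1$-in-time Aubin--Lions--Simon lemma rather than the Hilbertian one. In the setting of this paper one moreover has $\sup_n\|b_n\|_{L^\infty_tL^p_x}<\infty$, which upgrades the estimate to $\partial_t\solAD_n$ bounded in $L^2_tW^{-1,p}_x$.
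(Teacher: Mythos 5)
Your proof is correct and follows essentially the same route as the paper: local energy equality for each $\solAD_n$ via \cite{BCC24}, uniform $L^\infty_{t,x}\cap L^2_tH^1_x$ bounds, an Aubin--Lions type compactness step giving strong $L^2_{t,x}$ (hence a.e.) convergence of a further subsequence, and passage to the limit in the weak formulation and in the local energy balance with weak lower semicontinuity of $\int\varphi|\nabla\solAD_n|^2$. The only difference is in how the compactness is implemented: you invoke the $L^1$-in-time Aubin--Lions--Simon lemma with the triple $H^1\hookrightarrow\hookrightarrow L^2\hookrightarrow W^{-1,1}$, while the paper proves the same strong convergence by a self-contained mollification argument (controlling $\solAD_n-\solAD_n\star\varphi_\delta$ by the uniform $H^1$ bound and $\partial_t(\solAD_n\star\varphi_\delta)$ through the equation), which is equivalent in substance.
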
}
\begin{proof}
     {Recall the initial datum $\theta_{\rm in} \in C^\infty(\T^2)$ and recall that for any $n \geq 0$,  $b_n \in L^2([0,T]\times \T^2))$ is divergence free. Thanks to \cite[Theorem 3.3]{BCC24} we know} that there is a unique solution $\theta_n \in L^\infty_{t,x} \cap L^2_t H^1_x$ to \eqref{eq:ADE} that satisfies the local energy equality in the sense of distributions : 
	\begin{align} \label{eq:energy-equality}
 		\partial_t |\theta_n|^2 + \diver \left (\bb_n \frac{|\theta_n|^2}{2} \right ) = \Delta \frac{|\theta_n|^2}{2} - |\nabla \theta_n|^2 \,.
	\end{align}
    {We do an Aubin-Lions type argument to show} that any $L^\infty_{t,x}$ weak* converging subsequence $\{ \theta_{n_k} \}_{n_k}$ converges strongly in $L^{{s}} ((0, T) \times \T^2)$ for any ${s} < \infty$. Since the sequence is uniformly bounded in $L^\infty$ it is enough to show strong convergence in $L^1$.  We claim that  for any $\varepsilon>0$ there exists $\delta >0$ and $N \in \N$ such that  
    $$ \| \theta_{n_j} - \theta_{n_k} \|_{L^1} \leq \| \theta_{n_k} - \theta_{n_k} \star \varphi_\delta \|_{L^1} + \| \theta_{n_j} \star \varphi_\delta - \theta_{n_k} \star \varphi_\delta  \|_{L^1} + \| \theta_{n_j} - \theta_{n_j} \star \varphi_\delta \|_{L^1} < \varepsilon \,,$$
    for any $n_k, n_j \geq N$,
   where $\varphi_\delta$ is a space-time rescaled mollifier. Thanks to the fact that the sequence $\{\theta_n \}_{n}$ is uniformly bounded in {$L^2 ( (0,T); H^1(\T^2))  $} (with a constant depending only on  $ \| \ADin \|_{ L^2}$  and $T< \infty$), we can now fix $\delta >0$ sufficiently small  so that 
   $$  \| \theta_{n_k} - \theta_{n_k} \star \varphi_\delta \|_{L^1}  +\| \theta_{n_j} - \theta_{n_j} \star \varphi_\delta \|_{L^1} { \leq c \delta ( \| \nabla \theta_{n_k} \|_{L^2}  + \| \nabla \theta_{n_j} \|_{L^2}  ) } < \frac{2 \varepsilon}{3} \,,$$ 
{   for any $n_k, n_j  \in \N$.  From now on $\delta >0$ is fixed. }
   From the equation we have 
   $$\| \partial_t \theta_n  \star \varphi_\delta  \|_{L^1} \leq \| \diver (b_n \theta_n)  \star \varphi_\delta  \|_{L^1}  + \| \Delta \theta_n  \star \varphi_\delta \|_{L^1} \leq C(\delta) \| \theta_n \|_{L^\infty} \| b_n \|_{L^1} \,.$$
   Using also that $\| \nabla \theta_n \star \varphi_{\delta} \|_{L^1_{t,x}} \leq \| \nabla \theta_n \star \varphi_{\delta} \|_{L^2_{t,x}} \leq \| \solAD_{\text{in}} \|_{L^2}$ we deduce by Sobolev embedding that $\theta_{n_k}  \star \varphi_{\delta} \to \theta  \star \varphi_{\delta} $ strongly in $L^1$ as $n_k \to \infty$. Therefore, we can find $N \in \N$ sufficiently large so that
   $$ \| \theta_{n_j} \star \varphi_\delta - \theta_{n_k} \star \varphi_\delta  \|_{L^1} < \frac{\varepsilon}{3} \,,$$
   for any $n_k , n_j \geq N$.
	We now fix a weak* converging subsequence  $\{ \theta_{n_k} \}_{n_k}$ and using that it is strongly converging in $L^{{s}} $ for any ${s}< \infty$ and that  $b_n \to b $ in $L^{{s'}}$ for some  ${s'}>1$ we deduce from \eqref{eq:energy-equality} passing into the distributional limit the local energy equality \eqref{eq:energy-equality}. We notice that the inequality  in the {limit} is due to the weak lower semicontinuity of  $| \nabla \theta_{n_k}|^2$.
\end{proof}

We are now ready to conclude the proof.

\begin{proof}[Proof of Theorem \ref{thm:NU_ADE_loops}]
    We now show that there are at least two distinct solutions.
    By Proposition \ref{prop:basicprop}, $b_n \to b$ in $L^1_{t,x}$ so that $(\solAD_{2n})_n$ and $(\solAD_{2n+1})_n$ converge weakly-$\ast$ (up to some non-relabelled subsequence) to some $\solADe,\solADo \in L^\infty([0,T],\T^2)$ solving the original \ref{eq:ADE}{, which are also parabolic thanks to Lemma \ref{lem:parabolic_limit}.} Thanks to the weak* convergence and to $(ii)$ and $(iv)$ of Lemma \ref{lem:claims} we have 
    \begin{align*}
        \int_{\{x,y \ge a_0\}} \solADe (x,t) dx  \ge  \lim_{n \to \infty} \sum_{k\in J_n}\int_{\{x,y \ge a_0\}} \solAD_{2n,k} (x,t) dx 
        \ge \frac{2}{3}\intTd \ADin (x) dx \,.
    \end{align*}
    for any $t \in (\lim_{n\to \infty}T_n, T)$.
 Similarly,
    \begin{align*}
        \int_{\{x,y \le - a_0\}} \solADo (x,t) dx \ge \frac{2}{3}\intTd \ADin (x) dx\,,
    \end{align*}
    for any $t \in (\lim_{n\to \infty}T_n, T)$, 
    which implies $\solADe \neq \solADo$ by conservation of the average.
\end{proof}

\section{The velocity field in $L^{p}_t L^{\infty}_{x}$ }

{The geometric construction of the velocity field comes directly from \cite[Section~2.1]{CCS23}, with some simplifications.} We avoid the convolution in space and we rescale time to get $\bb \in L^p_tL^\infty_x$ for $p <2$.
{We sketch here its construction and refer the interested reader to \cite{CCS23} for additional details}.

\subsection{{Main ideas of the construction}} \label{subsec:idea-chess}

{ We start by considering the advection equation.
	Consider an initial datum that is a chessboard function of size $a_0$ and $1$-periodic. More precisely, we define the chessboard function of size $a_0$ as the function that takes the value $-1$ on black squares of the chessboard of size $a_0$ and $1$ on white squares of the chessboard of size $a_0$ (see Figure \ref{fig:chess_mixing} for a visualization of a chessboard function). The goal is to use a suitable building block velocity field so that the solution to the advection equation starting at time zero as a  chessboard function of size $a_0$  becomes  a chessboard function of size $a_1 \ll a_0$ in very short time. The quantification of the short time will be crucial in the proof of Theorem \ref{thm:NU_ADE_chess} to treat $\Delta$ as a small perturbation.
	 We will then  iterate a  suitable rescaled versions of the building block in disjoint highly concentrated time intervals to get finer and finer chessboard functions of size\footnote{The sequence $\{ a_q \}_q$ is a super-exponential monotone decreasing sequence that we will use in the proof.} $ a_q $  along an increasing sequence of times $\{t_q\}_q\subset [0,T/2]$.
To achieve a non--uniqueness result we reflect the velocity field defined on $[0, T/2]$ and we define $b(t,x) = - b(T-t, x)$ for any $t \in [T/2, T]$. Furthermore, using the idea in \cite{CCS23} we also add infinitely many swapping velocity fields on the time interval $[T/2, T]$ to prove that there are at least two distinct solutions of the advection equation satisfying $\theta_{\text{even}} (T, \cdot ) = - \theta_{\text{odd}} (T, \cdot) \neq 0$.
	 }
	 
	{ 
	 More precisely, we consider  $\{t_q\}_q\subset [0,T/2]$  a monotone sequence that converges at a super-exponential rate, where $T = 2 \lim_{q\to \infty} t_q$. 
	and  we design our vector field $b$ so that the following holds true.
	\begin{itemize}
		\item The regular Lagrangian flow of the velocity field restricted on the time-interval $[t_q, t_{q+1}[$\footnote{We remark that the velocity field is  divergence free and $L^\infty_{loc} ([0,T] \setminus \{T/2\} ; BV )$, so it admits a regular Lagrangian flow on each connected time interval that does not contains $t=T/2$.} rearranges a chessboard function of size $a_q$ into a chessboard function of size $a_{q+1}$ as sketched in Figure \ref{fig:chess_mixing} (mixing step).  
		\item 
		The regular Lagrangian flow of the velocity field restricted on the time-interval $[T-t_{q+1}, T-t_{q}[$  rearranges a chessboard function of size $a_{q+1}$ into a chessboard function of size $a_{q}$ where black and white are swapped as sketched in Figure \ref{fig:chess_unmixing} (unmixing step). Indeed, within this time interval, we include a swapping velocity field, which is useful for obtaining non-unique solutions to the advection equation that remain qualitatively unchanged under the addition of  $\Delta$ to the equation.
	\end{itemize}
	\begin{remark}
The main difference between our construction and the one in  \cite{CCS23} is the choice of the parameter $\gamma >0$ that quantifies the concentration in time of the building block so that subsequent times $\{ t_q \}_q$ satisfy $t_{q+1} - t_q = 3 a_q^\gamma$. In our case $\gamma \sim 2$ and $\gamma >2$ whereas in \cite{CCS23} the authors choose $\gamma \sim 0$ and $\gamma >0$. We are forced to choose high concentration in time $\gamma >2$ in order to treat the Laplacian as a perturbation in the equation and prove that the heuristics picture we provide here is qualitatively preserved also for the advection diffusion equation solutions.
\end{remark}
}
\begin{figure}[htbp]
	\[
	\begin{tikzpicture}[scale=0.6]
		\draw (0,0) rectangle (4cm, 4cm);
	\end{tikzpicture}
	\quad
	\begin{tikzpicture}[scale=0.6]
		\draw (0,0) rectangle (4cm, 4cm);
		\foreach \y in {1,1.5,3,3.5}{
			\draw (0,\y)--(4,\y);
			\draw[-stealth](1,0.125+\y)--(3,0.125+\y);
			\draw (0,0.25+\y)--(4,0.25+\y);
			\draw[stealth-](1,0.375+\y)--(3,0.375+\y);}
		\foreach \y in {0,0.5,2,2.5}{
			\draw (0,\y)--(4,\y);
			\draw[stealth-](1,0.125+\y)--(3,0.125+\y);
			\draw (0,0.25+\y)--(4,0.25+\y);
			\draw[-stealth](1,0.375+\y)--(3,0.375+\y);}
	\end{tikzpicture}
	\quad
	\begin{tikzpicture}[scale=0.6]
		\draw (0,0) rectangle (4cm, 4cm);
		\foreach \x in {0,0.5,1,1.5,2,2.5,3,3.5}{
			\draw (\x,0)--(\x,4);
			\draw (0.25+\x,0)--(0.25+\x,4);}
		\foreach \x in {0.25,0.5,1,1.75,2.25,2.5,3,3.75}{
			\draw[-stealth](0.125+\x,1)--(0.125+\x,3);}	
	\end{tikzpicture}
	\]
	\[	
	\begin{tikzpicture}[scale=0.6][x=1cm]
		\draw (0,0) rectangle (4cm, 4cm);
		\foreach \y in {0,2}{
			\foreach \x in {0,2}{
				\fill (\x,\y) rectangle (1+\x,1+\y) rectangle (2+\x,2+\y);}}
	\end{tikzpicture}
	\,
	\begin{tikzpicture}[scale=0.6][x=1cm]
		\draw (0,0) rectangle (4cm, 4cm);
		\foreach \y in {0,2}{
			\foreach \x in {0,2}{
				\fill (\x,\y) rectangle (1+\x,1+\y) rectangle (2+\x,2+\y);}}
	\end{tikzpicture}
	\,
	\begin{tikzpicture}[scale=0.6][x=1cm]
		\draw (0,0) rectangle (4cm, 4cm);
		\foreach \y in {0,0.5,1,1.5,2,2.5,3,3.5}{
			\foreach \x in {0}{
				\fill (\x,\y) rectangle (0.5+\x,0.25+\y) rectangle (1.5+\x,0.5+\y);
				\fill (1.5+\x,\y) rectangle (2.5+\x,0.25+\y) rectangle (3.5 +\x,0.5+\y);
				\fill (3.5 + \x,\y) rectangle (4+\x,0.25+\y);}}
	\end{tikzpicture}
	\,
	\begin{tikzpicture}[scale=0.6][x=1cm]
		\draw (0,0) rectangle (4cm, 4cm);
		\foreach \y in {0,0.5,1,1.5,2,2.5,3,3.5}{
			\foreach \x in {0,0.5,1,1.5,2,2.5,3,3.5}{
				\fill (\x,\y) rectangle (0.25+\x,0.25+\y) rectangle (0.5+\x,0.5+\y);}}
	\end{tikzpicture}
	\]
	\caption{{ In the  upper part of the Figure we represent the definition of the vector field in three different time sub-intervals of $[t_q, t_{q+1}]$.  The left lower figure is the solution to the advection equation at time $t= t_q$. 
	In the the lower part of the Figure from the second figure on we represent the associated solution to the advection equation at the end of the three subintervals of $[t_q, t_{q+1}]$
	 under the action of the vector field depicted above.}}
	\label{fig:chess_mixing}
\end{figure}

\begin{figure}[htbp]
	\[
	\begin{tikzpicture}[scale=0.6]
		\draw (0,0) rectangle (4cm, 4cm);
		\foreach \x in {0,0.5,1,1.5,2,2.5,3,3.5}{
			\draw (\x,0)--(\x,4);
			\draw (0.25+\x,0)--(0.25+\x,4);}
		\foreach \x in {0.25,0.5,1,1.75,2.25,2.5,3,3.75}{
			\draw[stealth-](0.125+\x,1)--(0.125+\x,3);}	
	\end{tikzpicture}
	\quad
	\begin{tikzpicture}[scale=0.6]
		\draw (0,0) rectangle (4cm, 4cm);
		\foreach \y in {1,1.5,3,3.5}{
			\draw (0,\y)--(4,\y);
			\draw[stealth-](1,0.125+\y)--(3,0.125+\y);
			\draw (0,0.25+\y)--(4,0.25+\y);
			\draw[-stealth](1,0.375+\y)--(3,0.375+\y);}
		\foreach \y in {0,0.5,2,2.5}{
			\draw (0,\y)--(4,\y);
			\draw[-stealth](1,0.125+\y)--(3,0.125+\y);
			\draw (0,0.25+\y)--(4,0.25+\y);
			\draw[stealth-](1,0.375+\y)--(3,0.375+\y);}
	\end{tikzpicture}
	\quad
	\begin{tikzpicture}[scale=0.6]
		\draw (0,0) rectangle (4cm,4cm);
		\foreach \y in {1,2,3}{
			\draw[-stealth] (1,\y)--(3,\y);}
	\end{tikzpicture}
	\]
	\[
	\begin{tikzpicture}[scale=0.6][x=1cm]
		\draw (0,0) rectangle (4cm, 4cm);
		\foreach \y in {0,0.5,1,1.5,2,2.5,3,3.5}{
			\foreach \x in {0,0.5,1,1.5,2,2.5,3,3.5}{
				\fill (\x,\y) rectangle (0.25+\x,0.25+\y) rectangle (0.5+\x,0.5+\y);}}
	\end{tikzpicture}
	\,
	\begin{tikzpicture}[scale=0.6][x=1cm]
		\draw (0,0) rectangle (4cm, 4cm);
		\foreach \y in {0,0.5,1,1.5,2,2.5,3,3.5}{
			\foreach \x in {0}{
				\fill (\x,\y) rectangle (0.5+\x,0.25+\y) rectangle (1.5+\x,0.5+\y);
				\fill (1.5+\x,\y) rectangle (2.5+\x,0.25+\y) rectangle (3.5 +\x,0.5+\y);
				\fill (3.5 + \x,\y) rectangle (4+\x,0.25+\y);}}
	\end{tikzpicture}
	\,
	\begin{tikzpicture}[scale=0.6][x=1cm]
		\draw (0,0) rectangle (4cm, 4cm);
		\foreach \y in {0,2}{
			\foreach \x in {0,2}{
				\fill (\x,\y) rectangle (1+\x,1+\y) rectangle (2+\x,2+\y);}}
	\end{tikzpicture}
	\,
	\begin{tikzpicture}[scale=0.6][x=1cm]
		\draw (0,0) rectangle (4cm, 4cm);
		\foreach \y in {0,2}{
			\foreach \x in {0,2}{
				\fill (\x,2+\y) rectangle (1+\x,1+\y) rectangle (2+\x,\y);}}
	\end{tikzpicture}
	\]
	\caption{{
	In the  upper part of the Figure we represent the definition of the vector field in three different time sub-intervals of $[T-t_{q+1}, T-t_{q}]$. The third upper figure represents the swapping velocity field, that is just a constant in space velocity field. The left lower figure is the solution to the advection equation at time $t= T-t_{q+1}$. In the the lower part of the Figure from the second figure on we represent the associated solution to the advection equation at the end of the three subintervals of $[T-t_{q+1}, T-t_{q}]$ under the action of the vector field depicted above. }}
	\label{fig:chess_unmixing}
\end{figure}

\begin{figure}
	\[  
	\begin{tikzpicture}[scale=0.013][x=1cm]
		\draw (0,0) rectangle (64cm,64cm);
		\node at (0,32)[left]{$\solAD_0$};
		\foreach \y in {0,32}{
			\foreach \x in {0,32}{
				\fill (\x,\y) rectangle (16+\x,16+\y) rectangle (32+\x,32+\y);}}
	\end{tikzpicture}
	\,
	\begin{tikzpicture}[scale=0.013][x=1cm]
		\draw[opacity=0.2] (0,0) rectangle (64cm,64cm);
		\foreach \y in {0,32}{
			\foreach \x in {0,32}{
				\fill[opacity=0.2] (\x,\y) rectangle (16+\x,16+\y) rectangle (32+\x,32+\y);}}
	\end{tikzpicture}
	\,
	\begin{tikzpicture}[scale=0.013][x=1cm]
		\draw[opacity=0.2] (0,0) rectangle (64cm,64cm);
		\foreach \y in {0,32}{
			\foreach \x in {0,32}{
				\fill[opacity=0.2] (\x,\y) rectangle (16+\x,16+\y) rectangle (32+\x,32+\y);}}
	\end{tikzpicture}
	\,
	\begin{tikzpicture}[scale=0.013][x=1cm]
		\draw[opacity=0.2] (0,0) rectangle (64cm,64cm);
		\foreach \y in {0,32}{
			\foreach \x in {0,32}{
				\fill[opacity=0.2] (\x,\y) rectangle (16+\x,16+\y) rectangle (32+\x,32+\y);}}
	\end{tikzpicture}
	\,
	\begin{tikzpicture}[scale=0.013][x=1cm]
		\draw[opacity=0.2] (0,0) rectangle (64cm,64cm);
		\foreach \y in {0,32}{
			\foreach \x in {0,32}{
				\fill[opacity=0.2] (\x,\y) rectangle (16+\x,16+\y) rectangle (32+\x,32+\y);}}
	\end{tikzpicture}
	\,\cdots\,
	\begin{tikzpicture}[scale=0.013][x=1cm]
		\draw[opacity=0.2] (0,0) rectangle (64cm,64cm);
		\foreach \y in {0,32}{
			\foreach \x in {0,32}{
				\fill[opacity=0.2] (\x,\y) rectangle (16+\x,16+\y) rectangle (32+\x,32+\y);}}
	\end{tikzpicture}
	\,
	\begin{tikzpicture}[scale=0.013][x=1cm]
		\draw[opacity=0.2] (0,0) rectangle (64cm,64cm);
		\foreach \y in {0,32}{
			\foreach \x in {0,32}{
				\fill[opacity=0.2] (\x,\y) rectangle (16+\x,16+\y) rectangle (32+\x,32+\y);}}
	\end{tikzpicture}
	\,
	\begin{tikzpicture}[scale=0.013][x=1cm]
		\draw[opacity=0.2] (0,0) rectangle (64cm,64cm);
		\foreach \y in {0,32}{
			\foreach \x in {0,32}{
				\fill[opacity=0.2] (\x,\y) rectangle (16+\x,16+\y) rectangle (32+\x,32+\y);}}
	\end{tikzpicture}
	\,
	\begin{tikzpicture}[scale=0.013][x=1cm]
		\draw[opacity=0.2] (0,0) rectangle (64cm,64cm);
		\foreach \y in {0,32}{
			\foreach \x in {0,32}{
				\fill[opacity=0.2] (\x,\y) rectangle (16+\x,16+\y) rectangle (32+\x,32+\y);}}
	\end{tikzpicture}
	\,
	\begin{tikzpicture}[scale=0.013][x=1cm]
		\draw (0,0) rectangle (64cm,64cm);
		\foreach \y in {0,32}{
			\foreach \x in {0,32}{
				\fill (\x,\y) rectangle (16+\x,16+\y) rectangle (32+\x,32+\y);}}
	\end{tikzpicture}
	\]
	\[
	\begin{tikzpicture}[scale=0.013][x=1cm]
		\draw (0,0) rectangle (64cm,64cm);
		\node at (0,32)[left]{$\solAD_1$};
		\foreach \y in {0,32}{
			\foreach \x in {0,32}{
				\fill (\x,\y) rectangle (16+\x,16+\y) rectangle (32+\x,32+\y);}}
	\end{tikzpicture}
	\,
	\begin{tikzpicture}[scale=0.013][x=1cm]
		\draw (0,0) rectangle (64cm,64cm);
		\foreach \y in {0,8,16,24,32,40,48,56}{
			\foreach \x in {0,8,16,24,32,40,48,56}{
				\fill (\x,\y) rectangle (4+\x,4+\y) rectangle (8+\x,8+\y);}}
	\end{tikzpicture}
	\,
	\begin{tikzpicture}[scale=0.013][x=1cm]
		\draw[opacity=0.2] (0,0) rectangle (64cm,64cm);
		\foreach \y in {0,8,16,24,32,40,48,56}{
			\foreach \x in {0,8,16,24,32,40,48,56}{
				\fill[opacity=0.2] (\x,\y) rectangle (4+\x,4+\y) rectangle (8+\x,8+\y);}}
	\end{tikzpicture}
	\,
	\begin{tikzpicture}[scale=0.013][x=1cm]
		\draw[opacity=0.2] (0,0) rectangle (64cm,64cm);
		\foreach \y in {0,8,16,24,32,40,48,56}{
			\foreach \x in {0,8,16,24,32,40,48,56}{
				\fill[opacity=0.2] (\x,\y) rectangle (4+\x,4+\y) rectangle (8+\x,8+\y);}}
	\end{tikzpicture}
	\,
	\begin{tikzpicture}[scale=0.013][x=1cm]
		\draw[opacity=0.2] (0,0) rectangle (64cm,64cm);
		\foreach \y in {0,8,16,24,32,40,48,56}{
			\foreach \x in {0,8,16,24,32,40,48,56}{
				\fill[opacity=0.2] (\x,\y) rectangle (4+\x,4+\y) rectangle (8+\x,8+\y);}}
	\end{tikzpicture}
	\,\cdots \,
	\begin{tikzpicture}[scale=0.013][x=1cm]
		\draw[opacity=0.2] (0,0) rectangle (64cm,64cm);
		\foreach \y in {0,8,16,24,32,40,48,56}{
			\foreach \x in {0,8,16,24,32,40,48,56}{
				\fill[opacity=0.2] (\x,\y) rectangle (4+\x,4+\y) rectangle (8+\x,8+\y);}}
	\end{tikzpicture}
	\,
	\begin{tikzpicture}[scale=0.013][x=1cm]
		\draw[opacity=0.2] (0,0) rectangle (64cm,64cm);
		\foreach \y in {0,8,16,24,32,40,48,56}{
			\foreach \x in {0,8,16,24,32,40,48,56}{
				\fill[opacity=0.2] (\x,\y) rectangle (4+\x,4+\y) rectangle (8+\x,8+\y);}}
	\end{tikzpicture}
	\,
	\begin{tikzpicture}[scale=0.013][x=1cm]
		\draw[opacity=0.2] (0,0) rectangle (64cm,64cm);
		\foreach \y in {0,8,16,24,32,40,48,56}{
			\foreach \x in {0,8,16,24,32,40,48,56}{
				\fill[opacity=0.2] (\x,\y) rectangle (4+\x,4+\y) rectangle (8+\x,8+\y);}}
	\end{tikzpicture}
	\,
	\begin{tikzpicture}[scale=0.013][x=1cm]
		\draw (0,0) rectangle (64cm,64cm);
		\foreach \y in {0,8,16,24,32,40,48,56}{
			\foreach \x in {0,8,16,24,32,40,48,56}{
				\fill (\x,\y) rectangle (4+\x,4+\y) rectangle (8+\x,8+\y);}}
	\end{tikzpicture}
	\,
	\begin{tikzpicture}[scale=0.013][x=1cm]
		\draw (0,0) rectangle (64cm,64cm);
		\foreach \y in {0,32}{
			\foreach \x in {0,32}{
				\fill (16+\x,\y) rectangle (32+\x,16+\y);}}
		\foreach \y in {0,32}{
			\foreach \x in {0,32}{
				\fill (\x,16+\y) rectangle (16+\x,32+\y);}}
	\end{tikzpicture}
	\]
	\[
	\begin{tikzpicture}[scale=0.013][x=1cm]
		\draw (0,0) rectangle (64cm,64cm);
		\node at (0,32)[left]{$\solAD_2$};
		\foreach \y in {0,32}{
			\foreach \x in {0,32}{
				\fill (\x,\y) rectangle (16+\x,16+\y) rectangle (32+\x,32+\y);}}
	\end{tikzpicture}
	\,
	\begin{tikzpicture}[scale=0.013][x=1cm]
		\draw (0,0) rectangle (64cm,64cm);
		\foreach \y in {0,8,16,24,32,40,48,56}{
			\foreach \x in {0,8,16,24,32,40,48,56}{
				\fill (\x,\y) rectangle (4+\x,4+\y) rectangle (8+\x,8+\y);}}
	\end{tikzpicture}
	\,
	\begin{tikzpicture}[scale=0.013][x=1cm]
		\draw (0,0) rectangle (64cm,64cm);
		\foreach \y in {0,2,4,6,8,10,12,14,16,18,20,22,24,26,28,30,32,34,36,38,40,42,44,46,48,50,52,54,56,58,60,62}{
			\foreach \x in {0,2,4,6,8,10,12,14,16,18,20,22,24,26,28,30,32,34,36,38,40,42,44,46,48,50,52,54,56,58,60,62}{
				\fill (\x,\y) rectangle (1+\x,1+\y) rectangle (2+\x,2+\y);}}
	\end{tikzpicture}
	\,
	\begin{tikzpicture}[scale=0.013][x=1cm]
		\filldraw[opacity=0.2] (0,0) rectangle (64cm,64cm);
	\end{tikzpicture}
	\,
	\begin{tikzpicture}[scale=0.013][x=1cm]
		\filldraw[opacity=0.2] (0,0) rectangle (64cm,64cm);
	\end{tikzpicture}    
	\,\cdots \,
	\begin{tikzpicture}[scale=0.013][x=1cm]
		\filldraw[opacity=0.2] (0,0) rectangle (64cm,64cm);
	\end{tikzpicture}
	\,
	\begin{tikzpicture}[scale=0.013][x=1cm]
		\filldraw[opacity=0.2] (0,0) rectangle (64cm,64cm);
	\end{tikzpicture}    
	\,
	\begin{tikzpicture}[scale=0.013][x=1cm]
		\draw (0,0) rectangle (64cm,64cm);
		\foreach \y in {0,2,4,6,8,10,12,14,16,18,20,22,24,26,28,30,32,34,36,38,40,42,44,46,48,50,52,54,56,58,60,62}{
			\foreach \x in {0,2,4,6,8,10,12,14,16,18,20,22,24,26,28,30,32,34,36,38,40,42,44,46,48,50,52,54,56,58,60,62}{
				\fill (\x,\y) rectangle (1+\x,1+\y) rectangle (2+\x,2+\y);}}
	\end{tikzpicture}
	\,
	\begin{tikzpicture}[scale=0.013][x=1cm]
		\draw (0,0) rectangle (64cm,64cm);
		\foreach \y in {0,8,16,24,32,40,48,56}{
			\foreach \x in {0,8,16,24,32,40,48,56}{
				\fill (\x,\y) rectangle (4+\x,4+\y) rectangle (8+\x,8+\y);}}
	\end{tikzpicture}
	\,
	\begin{tikzpicture}[scale=0.013][x=1cm]
		\draw (0,0) rectangle (64cm,64cm);
		\foreach \y in {0,32}{
			\foreach \x in {0,32}{
				\fill (\x,\y) rectangle (16+\x,16+\y) rectangle (32+\x,32+\y);}}
	\end{tikzpicture}
	\]
	\[
	\begin{tikzpicture}[scale=0.013][x=1cm]
		\draw (0,0) rectangle (64cm,64cm);
		\node at (0,32)[left]{$\solAD_3$};
		\foreach \y in {0,32}{
			\foreach \x in {0,32}{
				\fill (\x,\y) rectangle (16+\x,16+\y) rectangle (32+\x,32+\y);}}
	\end{tikzpicture}
	\,
	\begin{tikzpicture}[scale=0.013][x=1cm]
		\draw (0,0) rectangle (64cm,64cm);
		\foreach \y in {0,8,16,24,32,40,48,56}{
			\foreach \x in {0,8,16,24,32,40,48,56}{
				\fill (\x,\y) rectangle (4+\x,4+\y) rectangle (8+\x,8+\y);}}
	\end{tikzpicture}
	\,
	\begin{tikzpicture}[scale=0.013][x=1cm]
		\draw (0,0) rectangle (64cm,64cm);
		\foreach \y in {0,2,4,6,8,10,12,14,16,18,20,22,24,26,28,30,32,34,36,38,40,42,44,46,48,50,52,54,56,58,60,62}{
			\foreach \x in {0,2,4,6,8,10,12,14,16,18,20,22,24,26,28,30,32,34,36,38,40,42,44,46,48,50,52,54,56,58,60,62}{
				\fill (\x,\y) rectangle (1+\x,1+\y) rectangle (2+\x,2+\y);}}
	\end{tikzpicture}
	\,
	\begin{tikzpicture}[scale=0.013][x=1cm]
		\filldraw[opacity=0.6] (0,0) rectangle (64cm,64cm);
	\end{tikzpicture}
	\,
	\begin{tikzpicture}[scale=0.013][x=1cm]
		\filldraw[opacity=0.2] (0,0) rectangle (64cm,64cm);
	\end{tikzpicture}    
	\,\cdots \,
	\begin{tikzpicture}[scale=0.013][x=1cm]
		\filldraw[opacity=0.2] (0,0) rectangle (64cm,64cm);
	\end{tikzpicture}
	\,
	\begin{tikzpicture}[scale=0.013][x=1cm]
		\filldraw[opacity=0.6] (0,0) rectangle (64cm,64cm);
	\end{tikzpicture}    
	\,
	\begin{tikzpicture}[scale=0.013][x=1cm]
		\draw (0,0) rectangle (64cm,64cm);
		\foreach \y in {0,2,4,6,8,10,12,14,16,18,20,22,24,26,28,30,32,34,36,38,40,42,44,46,48,50,52,54,56,58,60,62}{
			\foreach \x in {0,2,4,6,8,10,12,14,16,18,20,22,24,26,28,30,32,34,36,38,40,42,44,46,48,50,52,54,56,58,60,62}{
				\fill (\x,\y) rectangle (1+\x,1+\y) rectangle (2+\x,2+\y);}}
	\end{tikzpicture}
	\,
	\begin{tikzpicture}[scale=0.013][x=1cm]
		\draw (0,0) rectangle (64cm,64cm);
		\foreach \y in {0,8,16,24,32,40,48,56}{
			\foreach \x in {0,8,16,24,32,40,48,56}{
				\fill (4+\x,\y) rectangle (8+\x,4+\y);}}
		\foreach \y in {0,8,16,24,32,40,48,56}{
			\foreach \x in {0,8,16,24,32,40,48,56}{
				\fill (\x,4+\y) rectangle (4+\x,8+\y);}}
	\end{tikzpicture}
	\,
	\begin{tikzpicture}[scale=0.013][x=1cm]
		\draw (0,0) rectangle (64cm,64cm);
		\foreach \y in {0,32}{
			\foreach \x in {0,32}{
				\fill (16+\x,\y) rectangle (32+\x,16+\y);}}
		\foreach \y in {0,32}{
			\foreach \x in {0,32}{
				\fill (\x,16+\y) rectangle (16+\x,32+\y);}}
	\end{tikzpicture}
	\]
	\[
	\begin{tikzpicture}[scale=0.88]
		\node at (-0.8,0){};
		\draw (0,0)--(4.1,0);
		\draw[dotted] (4.1,0)--(5.9,0);
		\draw (5.9,0)--(10,0);
		\draw (0 cm,4pt) -- (0 cm,-4pt) node[above=4pt,scale=0.8]{$0$};
		\draw (5 cm,4pt) -- (5 cm,-4pt) node[above=4pt,scale=0.8]{$T/2$};
		\draw (10 cm,4pt) -- (10 cm,-4pt) node[above=4pt,scale=0.8]{$T$};
		\draw (1 cm,2pt) -- (1 cm,-2pt) node[above=3pt,scale=0.6]{$t_1$};
		\draw (2 cm,2pt) -- (2 cm,-2pt) node[above=3pt,scale=0.6]{$t_2$};
		\draw (3.05 cm,2pt) -- (3.05 cm,-2pt) node[above=3pt,scale=0.6]{$t_3$};
		\draw (4.1 cm,2pt) -- (4.1 cm,-2pt) node[above=3pt,scale=0.6]{$t_4$};
		\draw (5.9 cm,2pt) -- (5.9 cm,-2pt) node[above=3pt,scale=0.6]{$T-t_4$};
		\draw (6.95 cm,2pt) -- (6.95 cm,-2pt) node[above=3pt,scale=0.6]{$T-t_3$};
		\draw (8 cm,2pt) -- (8 cm,-2pt) node[above=3pt,scale=0.6]{$T-t_2$};
		\draw (9 cm,2pt) -- (9 cm,-2pt) node[above=3pt,scale=0.6]{$T-t_1$};
	\end{tikzpicture}
	\]
	\caption{{The first few solutions to the advection--diffusion equation with approximated velocity field $b_n$ depicted at the first and last time-steps. The figure is just an approximation of the solutions, since the Laplacian will make some regularization errors that we will prove to be  small.}}
	\label{fig:chess_approx}
\end{figure}

{
	The strategy is to approximate the velocity field $b$ by a sequence $\{b_n\}_n$ with $b_n \equiv 0$ on the middle part $[t_n, T-t_n]$, such that the associated solution to the advection--diffusion equation $\solAD_n$ can be depicted approximately as in Figure \ref{fig:chess_approx}. We highlight that in this figure the behaviour of the sequence $\{ \solAD_{2n} \}_{n}$ and $\{ \solAD_{2n+1} \}_{n}$ is opposite at time $t=T$ thanks to the swapping time velocity fields. This property will be proved up to some small error given by the Laplacian term in the equation.  Thanks to this property,
	we are able to show that the even and the odd sequences converge to distinct limiting solutions qualitatively described above up to further subsequences.
	More precisely, we exhibit two distinct solutions to the advection--diffusion equation $\solADe$ and $\solADo$ with velocity field $b$ and initial datum $\ADin$, such that 
	\begin{align*}
		\solADe (T, \cdot) & \sim \ADin(\cdot),\\
		\solADo (T, \cdot) & \sim -\ADin(\cdot).
	\end{align*}
}

\subsection{{Choice of the parameters}}

{The tile-sizes are given by}
\begin{equation}
	a_{q+1}:=a_q^{1+\delta} \qquad \text{ and } \qquad \lambda_q := \frac{1}{2a_q}
\end{equation}
with (here $p<2$ is {crucial})
\begin{equation}\label{hyp:deltachess}
	2(1+\delta)^2 < \frac{p}{p-1} 
\end{equation}
{ and $a_0$ sufficiently small depending only on $\delta$ so that
\begin{align} \label{eq:a-0-small}
\sum_{j \geq 0} a_j^{\delta} \leq 2 a_{0}^\delta < \frac{1}{10} \,.
\end{align}}
The time scaling is given by the time-steps
\begin{equation}\label{eq:times}
	t_q := \sum_{k < q}3a_k^\gamma \qquad \text{ and } \qquad T:= 2 \lim_{q\to \infty} t_q
\end{equation}
with (here \eqref{hyp:deltachess} is {crucial})
\begin{align}
	2(1+\delta)^2 <\gamma < \frac{p}{p-1}\,, \tag{H$\gamma$}\label{hyp:gamma} 
\end{align}

{
	\subsection{Construction of the velocity field}
}

{
	The building blocks $\dW, \widetilde{\dW}, \overline{\dW}: \T^2 \to \R^2$ are shear flows defined by 
	\[ \dW (x_1, x_2) = (W(x_2),0), \qquad \widetilde{\dW}(x_1,x_2) = \left( 0, \frac{1 + W(x_1)}{2}\right), \qquad \overline{\dW}(x_1, x_2) = \left( 0, \frac{1-W(x_1)}{2}\right) \]
	where $W:\T \to \R$ is defined as follows
	\[ W(z) = 
	\begin{cases}
		1 & \text{if } z \in [0, 1/2 [,\\
		-1 & \text{if } z \in [1/2, 1[,
	\end{cases}\]
	and extended by periodicity.
}

The various time-intervals are given for every $q \ge 0, i=1,2,3$ by
\begin{align*}\label{eq:timeintervals}
	\I_{q,i} & = [t_q + (i-1) a_q^\gamma, t_q + i a_q^\gamma[\,,\\
	\J_{q,i} & = ]T- t_q - i a_q^\gamma, T- t_q - (i-1) a_q^\gamma]\,.
\end{align*}

{
\subsection{Construction of the velocity field}
	Let us denote by $\lfloor \xi \rfloor$ the largest integer smaller or equal than the real number $\xi$. We define
	\begin{equation}
		w_{q+1,2}(x) = 
		\begin{cases}
			- \half a_{q}^{1-\gamma} \dW((2a_{q+1})^{-1} x) & \text{for } x = (x_1, x_2) \text{ with } \lfloor x_2/a_{q} \rfloor \text{ even}\,,\\
			\half a_{q}^{1-\gamma} \dW((2a_{q+1})^{-1} x) & \text{for } x = (x_1, x_2) \text{ with } \lfloor x_2/a_{q} \rfloor \text{ odd}\,,
		\end{cases}
	\end{equation}
	and analogously
	\begin{equation}
		w_{q+1,3}(x) = 
		\begin{cases}
			a_{q+1} a_{q}^{-\gamma} \overline{\dW}((2a_{q+1})^{-1} x) & \text{for } x = (x_1, x_2) \text{ with } \lfloor (x_1+ a_q/2) /a_{q} \rfloor \text{ even}\,,\\
			a_{q+1} a_{q}^{-\gamma} \widetilde{\dW}((2a_{q+1})^{-1} x) & \text{for } x = (x_1, x_2) \text{ with } \lfloor  (x_1+ a_q/2)/a_{q} \rfloor \text{ odd}\,.
		\end{cases}
	\end{equation}
	We also define the swapping velocity field (actually only shifting the whole figure) as 
	\begin{equation}
		w_{q+1, \text{swap}} (x) = (a_q^{1-\gamma}, 0) \,.
	\end{equation}
	The Lagrangian flow of the swapping velocity field at time $a_q^\gamma$ maps a black and white checkerboard structure of size $a_q$ into a white and black checkerboard structure of size $a_q$ as shown in Figure \ref{fig:chess_unmixing}.
}

For all $q \ge 0, x \in \T^2$, the vector field itself is {assembled together in the following way :}
\begin{equation}
	\begin{split}
		b(t,x) = 
		\begin{cases}
			0 & \forall \, t \in \I_{q,1}\,,\\
			w_{q+1,2} & \forall \, t \in \I_{q,1}\,,\\
			w_{q+1,3} & \forall \, t \in \I_{q,1}\,,
		\end{cases}
	\end{split}
	\qquad \text{and} \qquad
	\begin{split}
		b(t,x) = 
		\begin{cases}
			w_{q+1,\text{swap}} & \forall \, t \in \J_{q,1}\,,\\
			-b(T-t,x) & \forall \, t \in \J_{q,1}\,,\\
			-b(T-t,x) & \forall \, t \in \J_{q,1}\,.
		\end{cases}
	\end{split}
\end{equation}

The regularity estimates (compared to estimates (4.20) in \cite{CCS23}) for the velocity field {$b$} are the following
\begin{equation}\label{eq:b_estimates}
	\begin{split}
		\linf{b}{(\J_{q,1})\times \T^2} = a_q^{1-\gamma}\,,\\
		\linf{b}{(\I_{q,2}\cup \J_{q,2})\times \T^2} = \half a_q^{1-\gamma}\,,\\
		\linf{b}{(\I_{q,3}\cup \J_{q,3})\times \T^2} = a_{q+1} a_q^{-\gamma}\,.
	\end{split}
\end{equation}
{Finally we observe that  $\nabla b$ is a measure and its absolutely continuous part satisfies $\nabla b \equiv 0$ almost everywhere. The last property is different compared to \cite{CCS23} since we avoid convolution in space.}

\subsection{Initial datum and even and odd chessboards} \label{sec:initialdatum}
Define the \textit{unit chessboard} $\solAD_0 : \T^2 \to \R$ by
\begin{equation*}
	\solAD_0(x_1,x_2) = 
	\begin{cases}
		1 & \text{if } \{ x_1 \}, \{ x_2 \}\in [0, 1/2[ \text{ or }  \{ x_1 \}, \{ x_2 \} \in [1/2,1[\,,\\
		-1 & \text{else}
	\end{cases}
\end{equation*}
where $\{\xi\} = \xi - \lfloor \xi \rfloor$ for all $\xi \in \R$ and extend $\solAD_0$ periodically to the torus. Then set $\bar{\solAD}_0(x) = \solAD_0(\lambda_0 x)$ and define
\begin{equation*}
	\ADin = (\bar{\solAD}_0 \star \psi)(x)
\end{equation*}
where $\psi$ is a rescaled mollifier with parameter $a_0^{1+ \delta/2}$, i.e. 
\begin{equation}\label{eq:psi}
	\psi(x) = a_0^{-2-\delta}\tilde{\psi}(a_0^{-1-\delta/2}x) \,,
\end{equation}
with $\tilde{\psi} \in C^\infty_c(\R)$ is a smooth bump function such that
\begin{equation*}
	\supp \tilde{\psi} \subseteq [-2,2]\,, \qquad \int_\R |\tilde{\psi}| = 1\,, \qquad \|\tilde{\psi}\|_{L^\infty(\R)} \le 1\,, \qquad \|\nabla \tilde{\psi} \|_{L^\infty (\R)} \le 1\,.
\end{equation*}

Then define for every $q \ge 0$ the $q$-th \textit{even/odd chessboard} of tile-size $a_q$ by 
\begin{equation}\label{eq:AqBq}
	A_q := \supp\{1+\solAD_0(\lambda_q x)\} \qquad \text{respectively} \qquad B_q := \supp\{1-\solAD_0(\lambda_q x)\}.
\end{equation}

Notice
\begin{equation}\label{eq:initialchess}
	\linf{\ADin}{\T^2} \le 1 \qquad \text{ and } \qquad \linf{\nabla \ADin}{\T^2} \le a_0^{-1-\delta/2}.
\end{equation}

\subsection{Main properties}

We collect all the wanted properties in the following proposition.

\begin{prop}\label{prop:chessproperties}
	For every $1 \le p < 2$, there exists a vector field $\bb$, an initial datum $ \rho_{\rm in}$ satisfying \eqref{eq:initialchess} and a sequence $(\bb_n)_n$ with
	\[ \bb_n = \one_{[0,t_n]\cup [T-t_n,T]}\bb\] for the  sequence $t_n \to T/2$ defined {in \eqref{eq:times}} such that the following properties hold:
	\begin{enumerate}
		\item  The vector field $\bb$ is divergence-free and belongs to $L^p([0,T],L^\infty(\T^2))$ and $\bb_n \to \bb$ in $L^1({[0,T]} \times \T^2)$.
		\item  For every $n \ge 0$, there is a unique solution $\solTE_n \in L^\infty([0,T]\times\T^2)$ to the \eqref{eq:TE} with velocity field $\bb_n$ such that 
		$$ \solTE_{2n} (T, x) - \solTE_{2n +1}  (T, x)  = 2 \,, \qquad \text{for all } x \in A_0 [5 a_0^{1+ \delta /2}] $$
		and 
		$$ \solTE_{2n } (T, x) - \solTE_{2n +1}  (T, x)  = - 2 \,, \qquad \text{for all } x \in   B_0 [5 a_0^{1+ \delta/2}] \,.$$
		\item \label{prop:item:3}
		For any $n \geq 0$, there exists a good set $G_n \subset \T^2$ with $\mathcal{L}^2 (G_n) \geq \frac{9}{10}$ such that for all $x \in G_n$, the backward regular Lagrangian flow $\bX^n_{t_n ,s}(x)$ of $\bb_n$ for $s \leq t_n$ satisfies 
		$$\int_{t_q}^{t_{q+1}} \| \nabla b_n  \|_{L^\infty ( B_{a_{q+1}^{1 + \delta}} (\gamma (s)))}  ds  =0 \,,$$
		for any $q \leq n-1$,
		where we used the shorthand notation $ \gamma (s) = \bX^n_{t_n ,s} (x)$. In other words, the velocity field is constant in a ball of a proper radius around $\gamma(s)$. 
		\item  \label{prop:item:4} For any $n \geq 0$, there exists a good set $\tilde{G}_n \subset \T^2$ with $\mathcal{L}^2 (\tilde{G}_n) \geq \frac{9}{10}$ such that for all $x \in \tilde{G}_n$ the forward regular Lagrangian flow $\bX^n_{T- t_n ,s} (x)$ of $\bb_n$ for $s \geq T - t_n$ satisfies 
		$$\int_{T-t_{q+1}}^{T-t_{q}} \| \nabla b_n  \|_{L^\infty ( B_{a_{q+1}^{1 + \delta}} (\gamma (s)))}  ds  =0 \,,$$
		for any $q \leq n-1$,
		where we used the shorthand notation $ \gamma (s) = \bX^n_{T-t_n ,s} (x)$.
	\end{enumerate}
\end{prop}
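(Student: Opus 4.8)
The plan is to establish the four items in turn, drawing on the explicit formulas for the shear building blocks, on the regularity estimates \eqref{eq:b_estimates}, and on the fact that (arranging, after a harmless rounding of the $a_q$ as in the loops construction, that the grids $a_{q+1}\Z$ are nested inside $a_q\Z$ and the $\lambda_q$ are integers) all relevant dyadic grids are nested.

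\emph{Item (1).} Divergence-freeness of $\bb$ is immediate: every building block $w_{q+1,2},w_{q+1,3},w_{q+1,\mathrm{swap}}$ and its time-reflection is a shear flow of the form $(f(x_2),0)$ or $(0,g(x_1))$, hence distributionally divergence free, and this passes to the $L^1$-limit. For the $L^p_tL^\infty_x$ bound I would split $[0,T]$ into the intervals $\I_{q,i},\J_{q,i}$ of length $a_q^\gamma$ and insert \eqref{eq:b_estimates}:
\[
\|\bb\|_{L^p((0,T);L^\infty(\T^2))}^p\;\lesssim\;\sum_{q\ge0}\Big(a_q^{(1-\gamma)p}a_q^\gamma+\big(a_{q+1}a_q^{-\gamma}\big)^p a_q^\gamma\Big)\;\lesssim\;\sum_{q\ge0}a_q^{\,p+\gamma(1-p)},
\]
which converges precisely because \eqref{hyp:gamma} forces $p+\gamma(1-p)>0$ (the $a_{q+1}$-term is a strictly higher power of $a_q$, hence negligible). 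For the approximation, $\bb-\bb_n=\one_{[t_n,T-t_n]}\bb$, so $\|\bb-\bb_n\|_{L^1([0,T]\times\T^2)}\lesssim\sum_{q\ge n}a_q^{1-\gamma}a_q^\gamma=\sum_{q\ge n}a_q\to0$.

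\emph{Items (3) and (4).} On each time subinterval of $[0,t_n]$ the field $\bb_n(s,\cdot)$ is a $BV$ shear flow with vanishing absolutely continuous gradient, so it admits a unique regular Lagrangian flow, and for $s\in[t_q,t_{q+1}[$ the jump set $J(\bb_n(s,\cdot))$ lies in one of the line families $\{x_1\in a_{q+1}\Z\}$ or $\{x_2\in a_{q+1}\Z\}$. The geometric input I would borrow from \cite[Section~2.1]{CCS23} is that a point which at time $t_q$ lies in the clean part of the size-$a_q$ chessboard, i.e. at distance $\ge 2a_{q+1}^{1+\delta}$ from $a_q\Z^2$, stays for \emph{all} $s\in[t_q,t_{q+1}]$ at distance $>a_{q+1}^{1+\delta}$ from $J(\bb_n(s,\cdot))$ — each sub-step being a shear preserves the distance to the line family on which it is singular, and the block amplitudes are tuned so the transverse shift realigns the trajectory with the refined grid $a_{q+1}\Z^2$ — and ends at $t_{q+1}$ in the clean part of the size-$a_{q+1}$ chessboard. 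I would then let $G_n$ be the set of $x$ for which $\bX^n_{t_n,t_q}(x)$ lies in that clean part for every $q\le n-1$; since each $\bX^n_{t_n,t_q}$ is measure preserving,
\[
\mathcal{L}^2(G_n^c)\;\le\;\sum_{q=0}^{n-1}\mathcal{L}^2\big(\{\,y:\mathrm{dist}(y,a_q\Z^2)<2a_{q+1}^{1+\delta}\,\}\big)\;\lesssim\;\sum_{q\ge0}a_{q+1}^{\delta}\;\le\;\tfrac1{10}
\]
by \eqref{eq:a-0-small}, and on $G_n$ the integrand in item (3) vanishes. Item (4) is identical with $\bX^n_{t_n,\cdot}$ replaced by the forward flow $\bX^n_{T-t_n,\cdot}$: on $[T/2,T]$ the field is $-\bb(T-t,\cdot)$ on the unmixing sub-steps and the constant field $w_{q+1,\mathrm{swap}}$ elsewhere, all with vanishing a.c.\ gradient, so the same shear argument produces $\tilde G_n$.

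\emph{Item (2), the heart of the matter.} Uniqueness of $\solTE_n$ follows again from the $BV$, divergence-free, a.e.-curl-free structure. I would track the chessboard \emph{function} under the flow of $\bb_n$. On $[0,t_n]$, the subinterval $[t_q,t_{q+1}]$ realizes the mixing step of Figure \ref{fig:chess_mixing}, which transports \emph{any} size-$a_q$ chessboard to the size-$a_{q+1}$ chessboard of the same colour; so the forward flow on $[0,t_n]$ sends $\bar{\solAD}_0=\solAD_0(\lambda_0\,\cdot\,)$ to $\solAD_0(\lambda_n\,\cdot\,)$. On $[t_n,T-t_n]$ the field vanishes. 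On $[T-t_n,T]$, the subinterval $[T-t_{q+1},T-t_q]$ is the time-reversed mixing step (which coarsens a size-$a_{q+1}$ chessboard to a size-$a_q$ chessboard of the same colour) followed by the translation by $(a_q,0)$ induced by $w_{q+1,\mathrm{swap}}$ over time $a_q^\gamma$, which swaps the colour of any size-$a_q$ chessboard; hence each of the $n$ levels $q=n-1,\dots,0$ coarsens by one scale and flips the colour, and the forward flow on $[0,T]$ transports $\bar{\solAD}_0$ to $(-1)^n\bar{\solAD}_0$ on the clean part of the $a_0$-chessboard. Since $\ADin=\bar{\solAD}_0\star\psi$ coincides with $\bar{\solAD}_0$ off the $2a_0^{1+\delta/2}$-neighbourhood of $a_0\Z^2$ and the clean margins degrade, scale by scale, by a total of at most $3a_0^{1+\delta/2}$ (the small-scale losses $a_{q+1}^{1+\delta}$ being summable and far below $a_0^{1+\delta/2}$), I would conclude $\solTE_n(T,x)=(-1)^n$ for $x\in A_0[5a_0^{1+\delta/2}]$ and $\solTE_n(T,x)=-(-1)^n$ for $x\in B_0[5a_0^{1+\delta/2}]$; subtracting the values along $n=2m$ and $n=2m+1$ gives the two identities. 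I expect this item to be the main obstacle: one must make precise that each mixing step is inverted exactly by the corresponding unmixing step and that each swap field acts as a colour flip at the correct scale — so that the composite flow, acting on chessboard functions, reduces to $n$ consecutive colour flips — and carry the tedious (but routine) bookkeeping of the clean margins that pins down the restricted sets $A_0[5a_0^{1+\delta/2}]$, $B_0[5a_0^{1+\delta/2}]$. This is essentially the corresponding construction of \cite{CCS23}, here adapted to a vector field that is not regularized in space and is far more concentrated in time ($\gamma>2$, needed so that $\Delta$ is a perturbation); the other three items are comparatively soft.
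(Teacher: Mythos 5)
Your items (1) and (2) track the paper's route: the $L^p_tL^\infty_x$ and $L^1$ computations in (1) are the same as the paper's, and for (2) the paper simply defers to \cite{CCS23}, whose mixing/unmixing/swap mechanism is exactly what you sketch, so the extra bookkeeping you describe there is acceptable in spirit.

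The genuine gap is in your treatment of items (3)--(4). You define the good set by requiring that $\bX^n_{t_n,t_q}(x)$ lie at distance $\ge 2a_{q+1}^{1+\delta}$ from the \emph{coarse} grid $a_q\Z^2$, and you claim this forces the trajectory to stay at distance $>a_{q+1}^{1+\delta}$ from the jump set of $b_n(s,\cdot)$ for all $s\in[t_q,t_{q+1}]$ and to land in the fine clean part at $t_{q+1}$. This is false: on $[t_q,t_{q+1}]$ the active shears $w_{q+1,2},w_{q+1,3}$ are discontinuous on lines of the \emph{fine} grid (spacing $a_{q+1}$), and distance to $a_q\Z^2$ gives no control on distance to the fine lines --- a point with $x_2\in a_{q+1}\Z$ but far from $a_q\Z$ satisfies your condition and sits exactly on a jump line of $w_{q+1,2}$. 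Nor does the mixing step map the coarse clean part into the fine clean part: it cuts each $a_q$-tile into strips of width $a_{q+1}$ and rearranges them, so a positive-measure portion (of order $a_{q+1}^{\delta}$) of your clean set lands within $a_{q+1}^{1+\delta}$ of the fine grid. Even reading your conditions jointly across scales, the margin you impose at time $t_{q+1}$ relative to the $a_{q+1}$-grid is only $2a_{q+2}^{1+\delta}$, far smaller than the ball radius $a_{q+1}^{1+\delta}$ demanded in item (3). The paper's definition avoids this mismatch: it puts into $G_n$ the conditions $\bX^n_{t_n,t_j}(x)\in A_j[a_j^{1+\delta}]\cup B_j[a_j^{1+\delta}]$ for every $j=1,\dots,n$, i.e.\ margin $a_j^{1+\delta}$ relative to the $a_j$-grid at time $t_j$, each costing measure of order $a_j^{\delta}$ by measure preservation; then, for the interval $[t_q,t_{q+1}]$, the condition at its right endpoint $t_{q+1}$ alone controls the whole interval, because during each shear sub-step the coordinate transverse to the active jump lines is constant, and across sub-steps it shifts by exact multiples of $a_{q+1}$, so the distance $>a_{q+1}^{1+\delta}$ to the fine grid propagates backward through $[t_q,t_{q+1}]$. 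You need to re-match your margins to the scale of the active jump set in this way (and adjust $\tilde G_n$ identically) for the argument to close; as written, the key geometric claim would fail.
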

\begin{proof}
	
	{ Proof of $(1)$.}
	From \eqref{eq:b_estimates} we directly compute
	\begin{align*}
		\|b_n - b\|_{L^1([0,T]\times \T^2)} &= \int_{t_n}^{T-t_n} \intTd[2] |b(t,x)|dx\,dt\\
		&\le \sum_{q \ge n} \int_{\J_{q,1}} a_q^{1-\gamma} dt + \int_{\I_{q,2}\cup \J_{q,2}} \half a_q^{1-\gamma} dt +  \int_{\I_{q,3}\cup \J_{q,3}} a_{q+1} a_q^{-\gamma} dt\\
		&\le \sum_{q\ge n} a_q + a_q + 2a_{q+1} \le 8a_n \overset{n \to \infty}{\to} 0\,.
	\end{align*}
	The vector field is divergence-free as it is an $L^1$ limit of  divergence free velocity fields $\bb_n$.
	From estimates \eqref{eq:b_estimates}, we obtain
	\begin{align*}
		\|\bb_n \|_{L^p_t L^\infty_x}^p \le 2 \sum_{q=0}^\infty \left(a_q^\gamma (a_q^{1-\gamma})^p + a_q^\gamma \left(\half a_q^{1-\gamma}\right)^p + a_q^\gamma(a_{q+1}a_q^{-\gamma})^p\right) < \infty
	\end{align*}
	where the convergence holds if and only if $\gamma + (1-\gamma)p > 0$, which is implied by \eqref{hyp:gamma}. Therefore, $\bb_n$ is uniformly bounded in $L^p_t L^\infty_x$, then $\bb \in L^p_t L^\infty_x$.
	
	{ Property $(2)$} follows directly from  \cite{CCS23}. { Proof of  properties $(3), (4)$. We define }
	$$G_n = \bigcap_{j=1}^n \{ x :  \bX^n_{t_n, t_j} (x) \in A_j [a_j^{1+\delta}]  \cup B_j [a_j^{1+ \delta}] \}$$
	and { using the measure preserving property of the regular Lagrangian flow we can estimate
	$$ \mathcal{L}^2 (G_n^c) \leq \sum_{j=1}^n \mathcal{L}^2 ((A_j [a_j^{1+\delta}])^c \cap  (B_j [a_j^{1+ \delta}] )^c ) \leq 4 \sum_{j=1}^n a_j^{\delta} \leq \frac{1}{10}\,,  $$
	where in the last we have used  \eqref{eq:a-0-small}.}
  One can similarly define $\tilde{G}_n$.

	The last two properties directly follows from the construction, since the velocity field is at each time a vertical or horizontal shear flow which is constant on a small neighbourhood  of the backward flow for points $x \in G_n$ and on the forward flow for points $x \in \tilde{G}_n$.
\end{proof}

\section{Proof of Theorem \ref{thm:NU_ADE_chess}}

Firstly, we prove some stability results between the forward (or backward) regular Lagrangian flow and the forward (or backward) stochastic flow of the approximated velocity field $b_n$.

\subsection{Stability between the Stochastic and the Lagrangian flow}

\begin{lem}(Backward stability)\label{lem:chessstabback}
    For any $n\ge 0$, let $G_n \subset \T^2$ and $t_n \in [0,T]$ as in Proposition \ref{prop:chessproperties}, and let $\bX^n$ and $\bY^n$ denote the backward regular Lagrangian flow and the backward stochastic flow of $b_n$ respectively. Then, the following inequality holds:
    \begin{equation*}
        \sup_{t\in [0,T - t_n [} \sup_{x \in G_n} \int_{\Omega} |\bX^n_{T- {t}_n,t}(x)-\bY^n_{T- t_n ,t}(x,\omega)| d\P(\omega) \le a_0^{1+\delta}\,.
    \end{equation*}
\end{lem}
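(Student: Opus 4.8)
The argument mirrors the backward-stability proof of Lemma \ref{lem:loopstabback}, but is considerably simpler: by property \eqref{prop:item:3} of Proposition \ref{prop:chessproperties}, for $x \in G_n$ the field $b_n$ is \emph{constant} — not merely Lipschitz — on the ball $B_{a_{q+1}^{1+\delta}}(\gamma(s))$ around the Lagrangian trajectory $\gamma(s)=\bX^n_{t_n,s}(x)$ on each time-window $[t_q,t_{q+1}]$, $q\le n-1$. Hence there is no Grönwall exponential to pay: as long as the stochastic trajectory stays within distance $a_{q+1}^{1+\delta}$ of the Lagrangian one, the two drifts coincide and the difference $\bX^n-\bY^n$ only moves by the Brownian increment. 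First I would note that since $b_n\equiv 0$ on $(t_n,T-t_n)$ one has $\bX^n_{T-t_n,s}(x)=x$ and $\bY^n_{T-t_n,s}(x,\omega)=x-\sqrt2(\bW_{T-t_n}-\bW_s)$ on that window, while $\bX^n_{T-t_n,s}(x)=\bX^n_{t_n,s}(x)$ for $s\in[0,t_n]$, so property \eqref{prop:item:3} is directly applicable to the flow started at $T-t_n$.

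The key quantitative input is that the active windows are super-exponentially short: $|[t_q,t_{q+1}]|=3a_q^\gamma$ and $|(t_n,T-t_n)|=6\sum_{k\ge n}a_k^\gamma\le 12 a_n^\gamma$. I would define the good Brownian set
\[
\Omega_n=\Big\{\omega:\ \sqrt2\sup_{t\in[t_q,t_{q+1}]}|\bW_t-\bW_{t_{q+1}}|<\tfrac14 a_{q+1}^{1+\delta}\ \ \forall\,0\le q\le n-1,\quad \sqrt2\sup_{t\in[t_n,T-t_n]}|\bW_t-\bW_{T-t_n}|<\tfrac14 a_{n}^{1+\delta}\Big\}.
\]
By Doob's inequality \eqref{eq:translatedDoob} with $d=2$, each excluded event has probability at most $2\exp(-c\,a_q^{2(1+\delta)^2-\gamma})$ (respectively $2\exp(-c\,a_n^{2(1+\delta)-\gamma})$), and since $\gamma>2(1+\delta)^2$ by \eqref{hyp:gamma} the exponents are negative powers of $a_q$, blowing up as $q\to\infty$. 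Using $e^{-x}\le k!\,x^{-k}$ with $k$ large together with the smallness \eqref{eq:a-0-small}, this gives $\P(\Omega_n^c)\le \tfrac12 a_0^{1+\delta}$ for $a_0$ small enough.

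Then I would run a backward induction on $q=n,n-1,\dots,0$ via a stopping-time argument identical in spirit to the one in Lemma \ref{lem:loopstabback}. Setting $S_q:=\sqrt2\sup_{[t_n,T-t_n]}|\bW-\bW_{T-t_n}|+\sum_{j=q}^{n-1}\sqrt2\sup_{[t_j,t_{j+1}]}|\bW-\bW_{t_{j+1}}|$, one shows for $\omega\in\Omega_n$ that $|\bX^n_{T-t_n,t}(x)-\bY^n_{T-t_n,t}(x,\omega)|\le S_q$ for all $t\in[t_q,t_{q+1}]$. The base case $q=n$ is immediate from the explicit form on the trivial window. For the inductive step, if the difference ever exceeded $a_{q+1}^{1+\delta}$ on $[t_q,t_{q+1}]$, let $\tau$ be the last such time; on $[\tau,t_{q+1}]$ the two drifts agree by constancy of $b_n$ on the good ball, so $|[\bX^n-\bY^n](\tau)|\le S_{q+1}+\sqrt2\sup_{[t_q,t_{q+1}]}|\bW-\bW_{t_{q+1}}|=S_q$, and the super-exponential decay of $\{a_k\}$ with \eqref{eq:a-0-small} gives $S_q<a_{q+1}^{1+\delta}$, a contradiction by continuity. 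Hence the difference stays $\le a_{q+1}^{1+\delta}$ on the window and $\le S_q$ at $t_q$, closing the induction. At the end, for $\omega\in\Omega_n$ one has $|\bX^n-\bY^n|\le S_0<a_1^{1+\delta}<\tfrac12 a_0^{1+\delta}$ on $[0,t_n]$ and $\le S_n<\tfrac14 a_0^{1+\delta}$ on $(t_n,T-t_n)$, while on $\Omega_n^c$ one uses the trivial bound $|\bX^n-\bY^n|\le\operatorname{diam}(\T^2)\le 1$; combining, $\int_\Omega|\bX^n_{T-t_n,t}(x)-\bY^n_{T-t_n,t}(x,\omega)|\,d\P\le \tfrac12 a_0^{1+\delta}+\tfrac12 a_0^{1+\delta}=a_0^{1+\delta}$, uniformly in $x\in G_n$ and $t\in[0,T-t_n[$.

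The only genuinely delicate point, as in the loop construction, is the matching of the time-concentration exponent with the spatial scales: one needs each window short enough that its typical Brownian displacement $\sim a_q^{\gamma/2}$ lies below the good-ball radius $a_{q+1}^{1+\delta}=a_q^{(1+\delta)^2}$ of the \emph{next finer} scale, which forces exactly $\gamma>2(1+\delta)^2$; and the scale sum $\sum_{j\ge q}a_{j+1}^{1+\delta}$ must be dominated by its first term, which is where \eqref{eq:a-0-small} enters. Everything else is bookkeeping.
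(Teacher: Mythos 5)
Your proof is correct and follows essentially the same route as the paper's: a good Brownian set controlled via Doob's inequality using $\gamma > 2(1+\delta)^2$ together with smallness of $a_0$, then a window-by-window stopping-time/contradiction argument in which property \eqref{prop:item:3} of Proposition \ref{prop:chessproperties} makes the drift constant on the ball $B_{a_{q+1}^{1+\delta}}(\gamma(s))$ so that no Gr\"onwall factor appears, and finally the trivial bound on the complement of the good set. The only differences are cosmetic bookkeeping: your accumulated-increment quantities $S_q$ replace the paper's fixed per-window threshold $a_k^{1+\delta}$ (bounded at each step by $a_{k+1}^{1+\delta}+a_k^{1+\delta}/2$), and you make explicit the identification $\bX^n_{T-t_n,s}=\bX^n_{t_n,s}$ for $s\le t_n$ coming from $b_n\equiv 0$ on $[t_n,T-t_n]$, which the paper uses implicitly.
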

\begin{proof}
    Fix some $n \ge 0$ and we define 
    and define the $n$-th good probability set
    \begin{align*}
        \Omega_n  & = :\Omega_{n, 1} \cap \Omega_{n, 2} 
        \\
        & := \left\{ \omega : \sqrt{2}\sup_{t\in [t_{k-1}, t_{k}[} \vert \bW_{t}(\omega) -\bW_{t_{k}}(\omega)\vert  < a_{k}^{1+\delta}/ 2 \qquad \forall 1 \le {k} \le n \right\}
        \\
        & \quad  \cap \left\{ \omega : \sqrt{2}\sup_{t\in [t_{n}, T- t_n [} \vert \bW_{t}(\omega) -\bW_{t_{n}}(\omega)\vert  < a_{n}^{1+\delta}/2   \right\}\,.
    \end{align*}
     Thanks to Doob's inequality \eqref{eq:translatedDoob},
    \begin{equation}\label{eq:P_Omegaq}
    \begin{aligned}
        \P(\Omega_{n}^c) &\le \P(\Omega_{n,1}^c) + \P(\Omega_{n,2}^c)\\
        &\le \sum_{k=0}^{n} \exp\left( -\frac{a_{k+1}^{2(1+\delta)}}{16(3a_k^\gamma)}\right) + \exp\left( -\frac{a_{{n+1}}^{2(1+\delta)}}{16(12 a_{{n}}^\gamma)}\right) \\
        &\le \frac{1}{4} \sum_{k=0}^{n-1} a_k^{1 +\delta} \leq  a_0^{1+\delta}/2\,.
    \end{aligned}
    \end{equation}
    In the second inequality, we play the same trick than in {\eqref{eq:POmega_n_c} for} the loop case : as $\gamma - 2(1+\delta)^2 > 0$ by \eqref{hyp:gamma}, apply the fact that $\exp(-x) \le k!/x^k$ for every $k \in \N, x \ge 0$ with some $k$ such that
    \[k(\gamma -2(1+\delta)^2) > 1+ 2 \delta \,,\]
    and $a_0$ is chosen small enough to reabsorb the constant $ {4} k !\cdot (16\cdot 12)^k$ with $a_0^\delta$.
    For any $x \in G_n$ and $1 \leq k \leq n$ we now define 
    $$\tau_{k-1} (\omega) = \sup  \left \{ s \in  [t_{k-1}, t_{k}] : | \bY^n_{T- t_n , s} (x , \omega) - \bX^n_{T- t_n , s} (x ) | >   {a_{k}^{1 + \delta}} \right  \} \,, $$
    and 
    $$ \tau_{n} (\omega) = \sup  \left \{ s \in  [t_{n},  T - t_{n}] : | \bY^n_{T- t_n  , s} (x , \omega) - \bX^n_{T - t_n , s} (x ) | >   {a_{n}^{1 + \delta}} \right  \} \,, $$
    and we claim that $\tau_k \equiv t_{k}$ for any $\omega \in \Omega_n$ and any $1 \leq k \leq n$. From this property we conclude the proof, using also that $\P(\Omega_{n}^c) \leq a_0^{1+\delta}/2 \,. $
    We iteratively prove the claim. For $\tau_n$ we observe that $\bb_n \equiv 0$ on $[t_n, T - t_n]$ and then $\tau_n (\omega) \equiv t_n$ thanks to the definition of $\Omega_{n, 2}$.
    
     Suppose it is true for any $j \geq k$, then we want to prove the claim for $k-1$. We define $\gamma (s) = \bX^n_{T- t_n , s} (x ) $ and we suppose by contradiction that $\tau_{k-1} (\omega) > t_{k-1}$. {For every $\omega \in \Omega_n$}
    \begin{align*}
        |\bX^n_{T- t_n,\tau_{k-1}}(x)& -\bY^n_{T- t_n,\tau_{k-1}}(x,\omega)|  \\
        &\le |  \bX^n_{T- t_n,\tau_{k}}(x)-\bY^n_{t_n,\tau_{k}}(x,\omega) | +  \sup_{s \in [t_{k-1} , t_k[} |\bW_s (\omega) - \bW_{t_k}(\omega)|  
        \\
        &  \quad + \int_{\tau_{k-1}}^{t_k}|\bb_n(s,\bX^n_{T- t_n,s})-\bb_n(s,\bY^n_{T- t_n,s})|ds
        \\
        & \leq a_{k+1}^{1 + \delta} +\frac{ a_k^{1 + \delta}}{2} + \int_{\tau_{k-1}}^{t_k} \|\nabla \bb_n \|_{L^\infty (B_{a_{k}^{1+ \delta}} (\gamma (s) ))} | \bX^n_{T- t_n,s} (x) - \bY^n_{T- t_n,s} (x, \omega)|ds 
        \\
        & = a_{k+1}^{1 + \delta} + a_k^{1 + \delta}/2  < a_k^{1+ \delta}\,,
    \end{align*}
    where {in the second inequality, we used the induction hypothesis and $\omega \in \Omega_n$. In} the equality we used property \eqref{prop:item:3} of Proposition \ref{prop:chessproperties} {together with the bound $| \bX^n_{T- t_n,s} (x) - \bY^n_{T- t_n,s} (x, \omega)| \le a_k^{1+\delta}$ for all $s \in [\tau_{k-1}, t_k]$.} {The result} contradicts {the definition of $\tau_{k-1}$}, concluding the proof.
\end{proof}

For the time-interval $]T-t_n,T]$, by symmetry $b_n (t, x) = - b_n(T- t, x)$ for any $t > \frac{T}{2}$, we have the following forward stability. The proof is similar, using $\tilde{G}_n \subset \T^2$ given by \eqref{prop:item:4} of Proposition \ref{prop:chessproperties}.

\begin{lem}(Forward stability)\label{lem:chesssstabfor}
     For any $n\ge 0$, let $\tilde{G}_n \subset \T^2$ and $t_n \in [0,T]$ as in Proposition \ref{prop:chessproperties}, and let $\bX^n$ and $\bY^n$ denote the regular Lagrangian flow and the stochastic flow of $b_n$ respectively. Then, the following inequality holds
    \begin{equation*}
        \sup_{t\in [T - t_n, T ]} \sup_{x \in \tilde{G}_n} \int_{\Omega} |\bX^n_{ T- t_n, t}(x)-\bY^n_{T- t_n, t}(x,\omega)| d\P(\omega) \le a_0^{1+\delta}\,.
    \end{equation*}
\end{lem}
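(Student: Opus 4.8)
The plan is to mirror the proof of Lemma \ref{lem:chessstabback} under the time reflection $t \mapsto T - t$, which swaps the roles of forward and backward flows because $b_n(t,x) = -b_n(T-t,x)$ for $t > T/2$. First I would fix $n \ge 0$ and define the good probability set, this time anchored on the intervals $\J_{q,i}$ on the right half of $[0,T]$. Concretely, set
\begin{equation*}
\tilde\Omega_n := \left\{ \omega : \sqrt{2}\sup_{t \in ]T - t_k, T - t_{k-1}]} |\bW_t(\omega) - \bW_{T-t_k}(\omega)| < a_k^{1+\delta}/2 \quad \forall 1 \le k \le n \right\}\,,
\end{equation*}
and, exactly as in \eqref{eq:P_Omegaq}, use Doob's inequality \eqref{eq:translatedDoob} together with $|\J_{q,1} \cup \J_{q,2} \cup \J_{q,3}| \le 9 a_q^\gamma$ and the bound $\exp(-x) \le k!/x^k$ with $k$ chosen so that $k(\gamma - 2(1+\delta)^2) > 1 + 2\delta$, reabsorbing the constants with the smallness of $a_0$ from \eqref{eq:a-0-small}, to conclude $\P(\tilde\Omega_n^c) \le a_0^{1+\delta}$. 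Note that here $b_n \equiv 0$ on $[t_n, T - t_n]$, so the flow only moves on $[T-t_n, T]$ and I do not need a separate ``middle interval'' component in $\tilde\Omega_n$; the point $x \in \tilde G_n$ starts at time $T - t_n$.

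Next I would run the iteration from $q = n-1$ down to $q = 0$. For each $k$ define the exit time
\begin{equation*}
\tau_k(\omega) := \inf\left\{ s \in [T - t_{k+1}, T - t_k] : |\bY^n_{T-t_n,s}(x,\omega) - \bX^n_{T-t_n,s}(x)| > a_{k+1}^{1+\delta} \right\}\,,
\end{equation*}
and claim $\tau_k \equiv T - t_k$ (i.e. no exit) for all $\omega \in \tilde\Omega_n$. Assuming the bound holds at the start time $T - t_{k+1}$ of the interval (inductively, with the convention that at $T - t_n$ the two flows coincide), suppose by contradiction $\tau_k < T - t_k$. Then I estimate $|\bX^n_{T-t_n,\tau_k}(x) - \bY^n_{T-t_n,\tau_k}(x,\omega)|$ by splitting into the initial discrepancy ($\le a_{k+2}^{1+\delta}$), the Brownian increment ($\le a_{k+1}^{1+\delta}/2$ on $\tilde\Omega_n$), and a drift integral $\int_{T-t_{k+1}}^{\tau_k} \|\nabla b_n\|_{L^\infty(B_{a_{k+1}^{1+\delta}}(\gamma(s)))} |\bX^n - \bY^n|\, ds$. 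By property \eqref{prop:item:4} of Proposition \ref{prop:chessproperties}, for $x \in \tilde G_n$ this integrand vanishes (the velocity field is locally constant along $\gamma(s) = \bX^n_{T-t_n,s}(x)$ in the ball of radius $a_{k+1}^{1+\delta}$), so the drift term is zero, giving the strict bound $a_{k+2}^{1+\delta} + a_{k+1}^{1+\delta}/2 < a_{k+1}^{1+\delta}$, a contradiction. Hence on $\tilde\Omega_n$ the flows stay within $a_{k+1}^{1+\delta}$ throughout, and in particular within $a_0^{1+\delta}$ for all $t \in [T-t_n, T]$.

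Finally, combining the on-$\tilde\Omega_n$ bound with the measure estimate and the (uniform, since the torus is bounded) bound on $|\bX^n - \bY^n|$ off $\tilde\Omega_n$, I would conclude
\begin{equation*}
\sup_{t \in [T-t_n,T]}\sup_{x \in \tilde G_n} \int_\Omega |\bX^n_{T-t_n,t}(x) - \bY^n_{T-t_n,t}(x,\omega)|\, d\P(\omega) \le a_0^{1+\delta}\,,
\end{equation*}
exactly as in the backward case, up to harmless adjustment of constants (one can absorb factors into $a_0$). I do not expect any genuine obstacle: the only thing to be careful about is bookkeeping the direction of the time intervals $\J_{q,i}$ and checking that property \eqref{prop:item:4} — which is the forward analogue of \eqref{prop:item:3} — indeed gives the vanishing of $\nabla b_n$ along the forward trajectory on the relevant balls. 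The statement even explicitly says ``The proof is similar, using $\tilde G_n$'', so the intended argument is precisely this mirror image.
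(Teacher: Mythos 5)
Your proposal is correct and is essentially the argument the paper intends: the paper omits the proof, stating only that it is "similar" to Lemma \ref{lem:chessstabback} using $\tilde{G}_n$ and property \eqref{prop:item:4}, and your mirrored construction (good set anchored at the left endpoints of the intervals $]T-t_{k+1},T-t_k]$, Doob plus \eqref{hyp:gamma}, exit-time induction with vanishing drift from \eqref{prop:item:4}) is exactly that mirror image, with the correct index bookkeeping. The only cosmetic slips (writing $9a_q^\gamma$ for the interval length $3a_q^\gamma$, and taking $\P(\tilde{\Omega}_n^c)\le a_0^{1+\delta}$ rather than $a_0^{1+\delta}/2$ before adding the off-event contribution) are harmless and absorbed by the smallness of $a_0$, as you note.
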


All in all, thanks to the  backward and the forward stability, we prove some quantitative closeness  of the solutions to the \eqref{eq:TE} to the solutions of the \eqref{eq:ADE} with velocity field $b_n$. More precisely we prove the following quantitative bound.
\begin{lem}\label{lem:TE-ADE_close}
    Let $\solAD_n$ and $\solTE_n$ be the solutions to the advection--diffusion  and advection--equation respectively  with initial datum $\theta_{\rm in}$ and velocity field $b_n$ as in Proposition \ref{prop:chessproperties}. For any $f \in W^{1,\infty}$ we have the following bound for any $n \in \N$
        \begin{align*}
        \left\vert \intTd[2] f(x) (\solAD_n (T,x) - \solTE_n (T,x)) dx\right\vert & \le a_0^{1+ \delta} ( \| \ADin \|_{L^\infty}  \| \nabla f \|_{L^\infty} + \| \nabla \ADin \|_{L^\infty}  \|  f \|_{L^\infty} )  
        \\
        & \quad + 2 \| f \|_{L^\infty} ( \mathcal{L}^2 (G_n^c) +  \mathcal{L}^2  ( \tilde{G}_n^c) ) \,,
        \end{align*}
\end{lem}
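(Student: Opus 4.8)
The plan is to write both solutions via their respective (forward and backward) representation formulas and estimate the difference by splitting at the middle time $T/2$, where $b_n \equiv 0$ and nothing propagates. Concretely, for a test function $f \in W^{1,\infty}$ I would use the backward Feynman--Kac formula \eqref{eq:FKbackward} for $\solAD_n$ and the backward Lagrangian representation \eqref{eq:TErepresentation} for $\solTE_n$, but pivot through time $T-t_n$ rather than going all the way to $0$: since $b_n$ vanishes on $[t_n, T-t_n]$, both flows simply evolve by pure (respectively, no) Brownian noise on this middle interval, and the relevant comparison reduces to (i) the forward evolution on $[T-t_n,T]$ and (ii) the backward evolution on $[0,t_n]$. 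This is exactly the structure that makes the forward stability Lemma \ref{lem:chesssstabfor} and backward stability Lemma \ref{lem:chessstabback} applicable.

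First I would write, using \eqref{eq:FKforward} with the flow started at $T-t_n$,
\[
\intTd[2] f(x)\solAD_n(T,x)\,dx = \int_\Omega \intTd[2] f\big(\bY^n_{T-t_n,T}(x,\omega)\big)\,\solAD_n(T-t_n,x)\,dx\,d\P(\omega),
\]
and the analogous formula with the Lagrangian flow for $\solTE_n$; subtracting and using $|f(\bY)-f(\bX)|\le \|\nabla f\|_{L^\infty}|\bY-\bX|$ on the set $\tilde G_n$ (and the crude bound $2\|f\|_{L^\infty}$ off $\tilde G_n$, whose measure is controlled by $\mathcal L^2(\tilde G_n^c)$), Lemma \ref{lem:chesssstabfor} gives a contribution bounded by $\|\nabla f\|_{L^\infty}\|\solAD_n(T-t_n,\cdot)\|_{L^1}a_0^{1+\delta} + 2\|f\|_{L^\infty}\mathcal L^2(\tilde G_n^c)$; since $\solAD_n$ solves the advection--diffusion equation with a divergence-free field, $\|\solAD_n(T-t_n,\cdot)\|_{L^1}\le \|\ADin\|_{L^\infty}$ by the maximum principle (and $\mathcal L^2(\T^2)=1$). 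This reduces matters to comparing $\solAD_n(T-t_n,\cdot)$ with $\solTE_n(T-t_n,\cdot)$ tested against $f\circ \bY^n_{T-t_n,T}$, but since $b_n\equiv 0$ on the middle interval we simply have $\solAD_n(T-t_n,\cdot) = \solAD_n(t_n,\cdot)\star G_{\sqrt{2}(T-2t_n)}$ (heat semigroup) and $\solTE_n(T-t_n,\cdot)=\solTE_n(t_n,\cdot)$. I would then transfer the mollification onto the (Lipschitz) test function and again invoke $|x-y|$ estimates, so that the middle-interval error is absorbed into terms of size $\lesssim \sqrt{T-2t_n}\,\|\nabla(\cdot)\|$, which is harmless after reindexing constants, leaving the comparison of $\solAD_n(t_n,\cdot)$ and $\solTE_n(t_n,\cdot)$.

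For that last comparison I would use the backward formulas \eqref{eq:FKbackward} and \eqref{eq:TErepresentation} at time $t_n$ flowing back to $0$, where the initial data agree ($=\ADin$). Here the relevant test function is (morally) $f$ composed with the forward evolution and the heat kernel, still a Lipschitz function with controlled gradient, so applying the backward stability Lemma \ref{lem:chessstabback} on $G_n$ and the trivial bound off $G_n$ yields a contribution bounded by $\|\nabla \ADin\|_{L^\infty}\|f\|_{L^\infty}a_0^{1+\delta} + 2\|f\|_{L^\infty}\mathcal L^2(G_n^c)$, where the $\|\nabla\ADin\|_{L^\infty}$ factor appears because now the Lipschitz bound falls on $\ADin$ rather than on $f$. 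Summing the two contributions and collecting constants gives exactly the stated inequality.

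The main obstacle I anticipate is bookkeeping the role of the heat kernel on the inactive middle interval $[t_n,T-t_n]$: one must check that smoothing the solution by $G_{\sqrt{2}(T-2t_n)}$ does not destroy the $W^{1,\infty}$ control needed to invoke the stability lemmas, and that the errors it introduces are genuinely lower order (they scale like $\sqrt{T}$ times a gradient, and $T$ is a fixed finite quantity, so this is only a constant — but one must be careful that this constant does not secretly depend on $n$). An alternative, cleaner route is to avoid pivoting at $T/2$ altogether and instead note that on $[t_n,T-t_n]$ the stochastic flow of $b_n\equiv 0$ is just translation by Brownian increments while the Lagrangian flow is the identity, so $\E|\bY^n_{0,T}-\bX^n_{0,T}|$ telescopes into the two active pieces plus the middle Brownian displacement; but the middle Brownian displacement has expectation of order $\sqrt{T}$, which is \emph{not} small, so this naive telescoping fails and one is forced back to the test-function / semigroup argument above, where the Brownian increment is integrated against a Lipschitz quantity and the centered-ness of the increment saves the day. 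Handling this correctly — i.e. recognizing that the bad $\sqrt{T}$ term only appears paired with $\nabla f$ and hence is acceptable once we track the full right-hand side with both $\|\nabla f\|_{L^\infty}$ and $\|\nabla\ADin\|_{L^\infty}$ — is the delicate point.
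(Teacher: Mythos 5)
Your overall architecture is the same as the paper's: pivot at the time $T-t_n$, compare the forward evolutions on $[T-t_n,T]$ via the forward Feynman--Kac formula \eqref{eq:FKforward} and the forward stability Lemma \ref{lem:chesssstabfor} (this yields the $a_0^{1+\delta}\|\ADin\|_{L^\infty}\|\nabla f\|_{L^\infty}+2\|f\|_{L^\infty}\mathcal{L}^2(\tilde G_n^c)$ terms), and compare the two densities at time $T-t_n$ via the backward Feynman--Kac formula \eqref{eq:FKbackward} and the backward stability Lemma \ref{lem:chessstabback} (yielding $a_0^{1+\delta}\|\nabla\ADin\|_{L^\infty}\|f\|_{L^\infty}+2\|f\|_{L^\infty}\mathcal{L}^2(G_n^c)$); the paper implements exactly this through the auxiliary solution $\solbAD[n]$ of \eqref{eq:solbAD}. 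Note that for the second piece only boundedness of the transported test function is needed: after the forward step the remaining term is of the form $\int_\Omega\intTd[2] f\big(\bX^n_{T-t_n,T}(x)\big)\,\big(\solAD_n-\solTE_n\big)(T-t_n,x)\,dx\,d\P(\omega)$, which you may bound by $\|f\|_{L^\infty}\intTd[2]|\solAD_n(T-t_n,x)-\solTE_n(T-t_n,x)|\,dx$, so the Lipschitz burden falls on $\ADin$ through the backward flows from $T-t_n$ down to $0$; no regularity of $f$ composed with the flow is required.

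The genuine gap is your treatment of the inactive window $[t_n,T-t_n]$. First, the claim that $f$ composed with the forward evolution (and heat kernel) is ``still a Lipschitz function with controlled gradient'' is false: $b_n$ is a discontinuous shear field, its flow map is not Lipschitz (it is not even continuous across the shear interfaces), so the heat-semigroup transfer onto the composed test function cannot be justified this way. Second, and more seriously, your quantitative bookkeeping of the middle interval is wrong: you treat $T-2t_n$ as comparable to the fixed time $T$ and argue that an error of size $\sqrt{T}$ times a gradient ``is only a constant''. Such a term would be fatal, since in the application $\|\nabla f\|_{L^\infty}\sim a_0^{-1-\delta/2}$ and the whole content of the lemma is that the error is $O(a_0^{1+\delta})$ times the gradient norms. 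What actually saves the middle interval is that $t_n\to T/2$ super-exponentially, so $T-2t_n\le 12\,a_n^{\gamma}$ with $\gamma>2(1+\delta)^2$ by \eqref{hyp:gamma}; hence the Brownian displacement over $[t_n,T-t_n]$ stays below $a_n^{1+\delta}/2$ outside the small exceptional event controlled by Doob's inequality, and this is already built into the set $\Omega_{n,2}$ in the proof of Lemma \ref{lem:chessstabback}, whose conclusion covers all of $[0,T-t_n[$. So there is no need to pivot again at $t_n$ or to introduce the heat semigroup at all; as written, that detour rests on the false Lipschitz claim and on the incorrect ``constant-size error is harmless'' reasoning, and it is precisely this step that would fail.
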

\begin{proof}
The idea is to use the backward stability in the time interval $[0, T- t_n]$ of Lemma \ref{lem:chessstabback} using formula \eqref{eq:FKbackward} and the forward stability of Lemma \ref{lem:chesssstabfor} using formula \eqref{eq:FKforward}.

    
    We introduce the solution to the advection--diffusion equation starting from time  $T-t_n$ as the solution to the advection--equation. More precisely,  we define $\solbAD[n]$ as the solution of 
    \begin{equation}\label{eq:solbAD}
        \begin{cases}
            \partial_t \solbAD[n] + \dv(\bb_n \solbAD[n]) = \Delta \solbAD[n]\,,\\
            \solbAD[n](T-t_n,x) = \solTE_n(T-t_n,x)\,.
        \end{cases}
    \end{equation}
   From this definition we split the integral as
    \begin{align*}
        \left\vert \intTd[2] f(x)(\solAD_n(T,x) - \solTE_n(T,x)) dx\right\vert & \leq  \underbrace{\left\vert \int_{\T^2} f(x)(\solAD_n(T,x) - \solbAD[n](T,x)) dx \right\vert}_{=:I} 
        \\
        & + \underbrace{\left\vert \int_{\T^2} f(x)(\solbAD[n](T,x) - \solTE_n(T,x)) dx \right\vert}_{=:II}  \,.
    \end{align*}
    We now split 
    $$ I  \leq \left\vert \int_{G_n} f(x)(\solAD_n(T,x) - \solbAD[n](T,x)) dx \right\vert + \left\vert \int_{G_n^c} f(x)(\solAD_n(T,x) - \solbAD[n](T,x)) dx \right\vert = III + IV\,.$$
   Thanks to  energy estimate we have 
   \begin{align*}
       III &\leq \|f \|_{L^2 (\T^2)}  \| \solAD_n (T-t_n, \cdot ) - \solbAD[n] (T-t_n, \cdot) \|_{L^2 (G_n)}\\
       &\leq \linf{f}{\T^2} \| \solAD_n (T- t_n, \cdot ) - \solbAD[n] (T- t_n , \cdot ) \|_{L^\infty (G_n)}\,, 
   \end{align*}
    and thanks to {\eqref{eq:solbAD},} 
    Lemma \ref{lem:chessstabback} and  the backward Feynman-Kac formula \eqref{eq:FKbackward} we have 
    \begin{equation*}
    \| \solAD_n (T- t_n, \cdot ) - \solbAD[n] (T -t_n , \cdot ) \|_{L^\infty (G_n)}  = \| \solAD_n (T- t_n, \cdot ) - \solTE_n (T- t_n , \cdot ) \|_{L^\infty (G_n)} \leq  \linf{\nabla \ADin}{\T^2} a_0^{1+\delta}.
    \end{equation*}
    We also estimate thanks to the maximum principle and Proposition \ref{prop:chessproperties} for the bound on $G_n^c$
    $$IV \leq  2  \| f \|_{L^\infty} \mathcal{L}^2 (G_n^c ) \,.$$
    
     Similarly, we employ  the forward Feynman-Kac formula \eqref{eq:FKforward} and Lemma \ref{lem:chesssstabfor} to get
     \begin{equation*}
         II \le  \linf{\nabla f}{\T^2} \linf{\ADin}{\T^2} a_0^{1+\delta} + 2 \| f \|_{L^\infty} \mathcal{L}^2 (\tilde{G}_n^c) \,.
     \end{equation*} 
    from which we conclude the proof.
\end{proof}

We are now ready to prove the main theorem.

\subsection{Proof of Theorem \ref{thm:NU_ADE_chess}}


%

Firstly, we need to prove that there are at least two distinct solutions, more precisely we will prove, up to not relabelled subsequences,
$$ \theta_{2n} \overset{*}{\rightharpoonup} \theta_{\rm even} \neq \theta_{\rm odd} \overset{*}{\leftharpoonup} \theta_{2n +1} \,. $$
More precisely, we claim that  there exists $f \in C^\infty$ such that 
$$ \lim_{n \to \infty } \intTd[2] f(x) ( \solAD_{2n} (T,x) - \solAD_{2n +1} (T,x) ) dx > 0 \,. $$
 Recalling \eqref{eq:AqBq} for $A_0, B_0$ and \eqref{eq:psi} for $\psi$, we define $$f(x)= \left ( \one_{A_0[5 a_0^{1+\delta/2}]   }  - \one_{ B_0[5 a_0^{1+\delta/2}]} \right ) \star \psi(x)$$
    then  $f$ satisfies
    \begin{equation}\label{eq:f_properties}
      \|f\|_{L^\infty(\T^2)}\le 1 \qquad \text{and} \qquad \|\nabla f\|_{L^\infty(\T^2)} \le a_0^{-1-\delta/2}. 
    \end{equation} 
    
    We now split the integral 
    \begin{align}\label{eq:3pieces}
        \intTd[2] f(x) ( \solAD_{2n} (T,x) - \solAD_{2n +1} (T,x) ) dx
        &\ge  \intTd[2] f(x) ( \solTE_{2n} (T,x) - \solTE_{2n +1} (T,x)) dx 
        \\ \nonumber
        \quad & -\intTd[2] |f (x)  ( \solAD_{2n}(T,x) - \solTE_{2n} (T,x)) | dx 
        \\
        \quad & -  \intTd[2] |f (x) ( \solAD_{2n+1}(T,x) - \solTE_{2n+1} (T,x) )| dx\,. \notag
    \end{align}
    
    Using Proposition  \ref{prop:chessproperties} we have 
$$\intTd[2] f(x) ( \solTE_{2n} (T,x) - \solTE_{2n +1} (T,x)) dx \geq 2 \mathcal{L}^2 ( A_0[10 a_0^{1+\delta/2}]  \cup B_0[10 a_0^{1+\delta/2}] ) - 100 a_0^{\delta /2} \geq 1 \,.$$

 By Lemma \ref{lem:TE-ADE_close} and estimates \eqref{eq:f_properties} and \eqref{eq:initialchess} and estimates $\mathcal{L}^2 ({G}_n^c \cup \tilde{G}_n^c ) \leq 1/5 $ thanks to Proposition \ref{prop:chessproperties}, for every $n \ge 0$ we can bound 
    \begin{align*}
        \intTd[2] | f(x) ( \solAD_n(T,x) - &\solTE_n(T,x) )| dx \\
        &\le  (\linf{\ADin}{\T^2}\linf{\nabla f}{\T^2}+ \linf{\nabla\ADin}{\T^2}\linf{f}{\T^2}) a_0^{1+\delta} + \frac{2}{5} \\
        &\le  2a_0^{\delta/2} + \frac{2}{5} < \frac{1}{2} \,,
    \end{align*}
    which concludes the proof of the existence of at least two {distinct} solutions.
    Finally, arguing as in the proof of Theorem \ref{thm:NU_ADE_loops} (see Section \ref{sec:concluding-proof}), we can prove that such solutions satisfies the local energy inequality.

\bibliographystyle{alpha}
 \bibliography{biblio}

\end{document}